\documentclass[10pt,leqno]{amsart}
\usepackage{graphicx}
\baselineskip=16pt

\usepackage{indentfirst,csquotes,amsthm, amscd, amsfonts, amssymb}

\usepackage{mathtools}
\usepackage{enumitem}
\usepackage{tikz-cd}
\usepackage{amsfonts}
\usepackage[matrix,arrow,curve,cmtip]{xy}
\usepackage{amssymb}
\usepackage{dsfont}

\input diagxy

\topmargin= .5cm
\textheight= 20cm
\textwidth= 32cc
\baselineskip=16pt

\evensidemargin= .9cm
\oddsidemargin= .9cm


\newcommand{\donotbreakdash}[1]{#1\nobreakdash-\hspace{0pt}}
\newcommand{\Q}{\mathcal{Q}}
\newcommand{\cat}[1]{\ensuremath{\mathtt{#1}}} 
\newcommand{\objectQuantaloida}{p}
\newcommand{\objectQuantaloidb}{q}
\newcommand{\objectQuantaloidc}{r}
\newcommand{\arrowQuantaloidf}{u}
\newcommand{\newmorphism}[4]{\ensuremath{{#2}\xymatrix@1@C=#1pt{\ar@{->}[r]|{\textstyle{#3}}&}{#4}}}
\newcommand{\arrowQuantaloidg}{v}
\newcommand{\tbigvee}{\mathop{\textstyle \bigvee}}
\newcommand{\set}[1]{\{\,#1\,\}}
\newcommand{\arrowQuantaloidh}{h}
\newcommand{\arrowQuantaloidk}{k}
\newcommand{\objectQuantaloidd}{s}
\newcommand{\arrowQuantaloidl}{l}
\newcommand{\alg}[1]{\ensuremath{\mathfrak{#1}}} 
\newcommand{\elementQa}{a}
\newcommand{\elementQb}{b}
\newcommand{\elementQlambda}{\lambda}
\newcommand{\elementQmu}{\mu}%
\newcommand{\elementQc}{c}%
\newcounter{num}[section]

\newcommand{\newal}{\widetilde{a_{\ell}}}
\newcommand{\newar}{\widetilde{a_{r}}}
\newcommand{\circlearrow}{\xymatrix@1@C=5mm{\ar@{->}[r]|{\circ}&}}
\newcommand{\Qpresheavef}{f}
\newcommand{\D}{\mathcal{D}}
\newcommand{\Qpresheaveg}{g}
\newcommand{\R}{\mathcal{R}}
\newcommand{\tbigwedge}{\mathop{\textstyle \bigwedge}}
\newcommand{\QpresheaveG}{G}
\newcommand{\Qpresheaveh}{h}
\newcommand{\newdownarrow}{{{\rlap{$\ $}\hbox{$\downarrow$}}}}%
\newcommand{\objectQuantaloide}{e}
\newcommand{\bigset}[1]{\bigl\{\,#1\,\bigr\}}
\newcommand{\elementQy}{y}
\newcommand{\elementQx}{x}

\renewcommand{\hom}{\text{\rm hom}}
\newcommand{\id}{\text{\rm id}}
\newcommand{\dom}{\text{\rm dom}}
\newcommand{\codom}{\text{\rm codom}}
\newcommand{\morph}{\text{\rm morph}}
\newcommand{\true}{\text{\rm true}}

\theoremstyle{plain}
          \newtheorem{theorem}{Theorem}[section]
          \newtheorem{lemma}[theorem]{Lemma}
          \newtheorem{proposition}[theorem]{Proposition}
	\newtheorem{corollary}[theorem]{Corollary}

	\newtheorem*{fact*}{Fact}

        \theoremstyle{definition}
          \newtheorem{example}[theorem]{Example}
                   
          \newtheorem*{addition*}{Addition}
\newtheorem*{standingassumption*}{Standing Assumption}

        \theoremstyle{remark}
\newtheorem*{remark*}{Remark}          
\newtheorem{histremark}[theorem]{Historical Remark}
\newtheorem{remark}[theorem]{Remark}
\newtheorem{remarks}[theorem]{Remarks}
\newtheorem{comment}[theorem]{Comment}
\newtheorem{comments}[theorem]{Comments}
\newtheorem{comment*}[theorem]{Comment}

\begin{document}
\title[Quantaloid-enriched categories:]{Quantaloid-enriched categories:\\ Factorization, weak classifiers, and symmetry} 
\author[J. Guti\'errez Garc{\'\i}a]{Javier Guti\'errez Garc{\'\i}a}
\date{\today}
\address{Departamento de Matem\'aticas, Universidad del Pa\'{\i}s Vasco (UPV/EHU), 48080, Bilbao, SPAIN}
\email{javier.gutierrezgarcia@ehu.eus}
\author[U. H\"ohle]{Ulrich H\"ohle}
\date{\today}
\address{Fakult\"{a}t f\"{u}r Mathematik und Naturwissenchaften, Bergische Universit\"{a}t, D-42097, Wuppertal, GERMANY}
\email{uhoehle@uni-wuppertal.de}
\keywords{Quantaloid, enriched category, left adjoint distributor, Cauchy completion, weak subobject classifier, (epi,exremal mono)-factorization}
\maketitle

\let\thefootnote\relax
\footnotetext{MSC2020: Primary 18D20  Secondary 06F07.} 

\begin{abstract}
This paper provides a comprehensive overview of some of the foundational properties of categories enriched over quantaloids, along with several new results. 
We demonstrate that the category whose objects are quantaloid-enriched categories and whose morphisms are left adjoint distributors admits an (epi, extremal mono)--factorization system. Furthermore, we prove that the category of cocomplete quantaloid-enriched categories satisfies the weak subobject classifier axiom, under stability conditions on the underlying quantaloid.  
As an application, we discuss how these structural results extend to quantale-valued sets, thereby generalizing the classical theory of \donotbreakdash{$\Omega$}valued sets.
\end{abstract} 
\bigskip

\section*{Introduction}

Quantaloid-enriched categories generalize sheaves on locales (cf.\ \cite{Walters81}) and can be understood as temporally dynamic extensions of quantale-enriched categories. 
This survey is intended to unify and extend existing results, while offering a coherent framework for further exploration of categorical structures enriched in quantaloids. 

Let $\Q$ be a small quantaloid. We begin by providing a detailed treatment of \donotbreakdash{$\Q$}enriched presheaves and their role in establishing cocompleteness and constructing the Cauchy completion of \donotbreakdash{$\Q$}enriched categories. 

Beyond these foundational aspects, we introduce new structural insights into \donotbreakdash{$\Q$}enriched categories, including:
\begin{enumerate}[label=\textup{--},leftmargin=12pt,topsep=3pt,itemsep=0pt
]
\item When the underlying quantaloid $\Q$ is stable, the category of separated and cocomplete \donotbreakdash{$\Q$}enriched categories satisfies the weak subobject classifier axiom (cf.\ \cite{ho25}).
\item The category \donotbreakdash{$\Q$}$\cat{Set}$, consisting of \donotbreakdash{$\Q$}enriched categories and left adjoint distributors, forms an \donotbreakdash{(epi, extremal mono)}category. Our proof relies on a characterization of epimorphisms and extremal monomorphisms in \donotbreakdash{$\Q$}$\cat{Set}$, which has only appeared in fragmented form in the literature (cf.\ \cite{PuDexue}).
\item We provide a complete characterization of the conditions under which the Cauchy completion preserves the symmetry axiom of \donotbreakdash{$\Q$}enriched categories, and we explore several notable examples arising from involutive quantales.
\end{enumerate}
As a concluding perspective, we examine how these constructions manifest 
 in the context of \donotbreakdash{quantale}valued sets,
 which can be interpreted as symmetric, \donotbreakdash{quantale}valued preordered sets. 
In this setting, the underlying involutive quantaloids are derived from involutive and unital quantales via diagonal arrows --- a construction originally introduced by I. Stubbe (cf.\ \cite{Stubbe14}).

The paper is organized as follows. We begin in Section~\ref{section1} by exploring general properties of small quantaloids and their relationship to quantales, including  those that are not necessarily unital. This sets the stage for the development of the theory of \donotbreakdash{quantaloid}enriched categories. In Section~\ref{section2}, we introduce the notion of \donotbreakdash{$\Q$}enriched categories and distributors, and establish the foundational properties of adjoint distributors and functors. Section~\ref{section3} is devoted to \donotbreakdash{$\Q$}enriched presheaves, including covariant and contravariant variants, and the construction of the presingleton space, which plays a central role in later sections. In Section~\ref{section4}, we explore cocompleteness in the enriched setting, characterize cocontinuous functors, and prove that the category of separated and cocomplete \donotbreakdash{$\Q$}enriched categories admits weak subobject classifiers of specific types. In Section~\ref{section5}, we study the category \donotbreakdash{$\Q$}Set of \donotbreakdash{$\Q$}enriched categories and left adjoint distributors, and show that it forms an  (epi,extremal mono)-category. This includes a detailed analysis of epimorphisms and extremal monomorphisms, culminating in a canonical factorization result. In Section~\ref{section6} we recall the Cauchy completion and give a full characterization of the property that the Cauchy completion preserves the symmetry axiom. Finally Section~\ref{section7} serves as an epilogue, illustrating the previous results in the context of quantale-valued sets.

\section{Quantaloids and their relationship to quantales}\label{section1}
Let $\cat{Sup}$ be the category of complete lattices and join-preserving maps, equipped with the tensor product of complete lattices (cf.\ \cite[Sect.~2.1.2]{EGHK}). It is well known that $\cat{Sup}$ is a symmetric and monoidal closed category.

A \emph{small quantaloid} is a small category $\Q$ with the following additional properties:
\begin{enumerate}[label=$\bullet$,leftmargin=12pt,topsep=3pt,itemsep=0pt
]
\item Each hom-set is a complete lattice.
 \item The composition of morphisms $\circ\colon\hom ({\objectQuantaloidb},{\objectQuantaloidc})\times \hom ({\objectQuantaloida},{\objectQuantaloidb})\rightarrow \hom ({\objectQuantaloida},{\objectQuantaloidc})$ preserves arbitrary joins in both variables.
\end{enumerate}
As a result, quantaloids can be viewed as categories enriched over $\cat{Sup}$. Hence for each pair of objects ${\objectQuantaloida}$ and ${\objectQuantaloidb}$ of a quantaloid the hom-set $\hom ({\objectQuantaloida},{\objectQuantaloidb})$ is a hom-space given by some complete lattice.
Regarding notation, we recall that domain and codomain are intrinsic to the notion of a \emph{morphism} (cf.\ \cite[Sect.~1.2]{Borceux1}). Thus, for a morphism $\arrowQuantaloidf \in \hom ({\objectQuantaloida},{\objectQuantaloidb})$, we may also write
\newmorphism{16}{\objectQuantaloida}{\arrowQuantaloidf}{\objectQuantaloidb}.

Let $\Q_0$ be the set of objects in $\Q$. If two arrows share the same domain (e.g.\ $\newmorphism{16}{\objectQuantaloida}{\arrowQuantaloidf}{\objectQuantaloidb}
$ and $\newmorphism{16}{\objectQuantaloida}{\arrowQuantaloidg}{\objectQuantaloidc}
$), then
\begin{equation*}
{\arrowQuantaloidf}\swarrow {\arrowQuantaloidg}:=\tbigvee\set{{\arrowQuantaloidh}\in \hom ({\objectQuantaloidc},{\objectQuantaloidb})\mid {\arrowQuantaloidh}\circ {\arrowQuantaloidg}\le {\arrowQuantaloidf}}.
\end{equation*}
If two arrows share the same codomain (e.g.\ \newmorphism{16}{\objectQuantaloida}{\arrowQuantaloidf}{\objectQuantaloidb}
 and \newmorphism{16}{\objectQuantaloidc}{\arrowQuantaloidg}{\objectQuantaloidb}), we define:
\begin{equation*} {\arrowQuantaloidg}\searrow {\arrowQuantaloidf}:=\tbigvee\set{{\arrowQuantaloidh}\in\hom ({\objectQuantaloida},{\objectQuantaloidc})\mid {\arrowQuantaloidg}\circ {\arrowQuantaloidh}\le {\arrowQuantaloidf} }.
\end{equation*}
Furthermore, for each object ${\objectQuantaloida}\in\Q_0$, 
the hom-space $\hom ({\objectQuantaloida},{\objectQuantaloida})$ 
forms a unital quantale, with the unit element denoted by $1_{\objectQuantaloida}$. 
Conversely, any unital quantale can be regarded as the hom-space of a quantaloid with a single object. In this sense, quantaloids can be interpreted as ``varying unital quantales'', where each object has its own associated quantale structure, and morphisms between objects reflect interactions between these structures.

  A quantaloid $\Q$ is said to be \emph{involutive} if there exists a contravariant functor $j\colon {\Q \rightarrow \Q}$ enriched in $\cat{Sup}$, which acts as the identity on objects and satisfies the involution condition $j\circ j=\id_\Q$. Since $j$ is enriched in $\cat{Sup}$, its action on the hom-spaces is arbitrary join-preserving.
For concepts not defined here, we refer to \cite{EGHK,Rosenthal90,Rosenthal96}.

The following remark describes a canonical extension of a quantaloid $\Q$ based on its diagonal arrows of $\Q$  (cf.\ \cite{Stubbe14}).

\begin{remark} \label{newremark1.1} (1) Let $\Q$ be a small quantaloid. Then $\Q$ induces a new quantaloid $D\Q$ constructed as follows: 
\begin{enumerate}[label=\textup{--},leftmargin=12pt,topsep=3pt,itemsep=0pt
]
\item The objects of $D\Q$ are the morphisms of $\Q$.
\item The hom-spaces of $D\Q$ are constructed using square diagrams in $\Q$ of the form
\begin{center}$\bfig
\qtriangle(0,0)|mmm|/.>`>`>/<500,500>[{\objectQuantaloida}`{\objectQuantaloidc}`{\objectQuantaloidd};{\arrowQuantaloidg}\searrow{\arrowQuantaloidk}`\textstyle{\arrowQuantaloidk}`\textstyle{\arrowQuantaloidg}]
\btriangle(0,0)|mmm|/>`>`.>/<500,500>[{\objectQuantaloida}`{\objectQuantaloidb}`{\objectQuantaloidd};\textstyle{\arrowQuantaloidf}`{\textstyle\arrowQuantaloidk}`{\arrowQuantaloidk}\swarrow {\arrowQuantaloidf}]
 \efig$
\end{center}
Then $\hom ({\arrowQuantaloidf},{\arrowQuantaloidg})$ consists of all diagonal arrows making the diagram commutative, i.e.,~
\begin{equation*}
\hom ({\arrowQuantaloidf},{\arrowQuantaloidg})
=\set{{\arrowQuantaloidk}\in \hom (\dom({\arrowQuantaloidf}),\codom ({\arrowQuantaloidg}))\mid  ({\arrowQuantaloidk}\swarrow {\arrowQuantaloidf})\circ {\arrowQuantaloidf}={\arrowQuantaloidk}= {\arrowQuantaloidg}\circ ({\arrowQuantaloidg}\searrow {\arrowQuantaloidk})},\end{equation*}
which is a complete sublattice of $\hom (\dom ({\arrowQuantaloidf}),\codom ({\arrowQuantaloidg}))$ in the sense of $\cat{Sup}$.
\item The composition of morphisms ${\arrowQuantaloidk}\in \hom ({\arrowQuantaloidf},{\arrowQuantaloidg})$ and ${\arrowQuantaloidl}\in \hom ({\arrowQuantaloidg},{\arrowQuantaloidh})$ is defined by:
\begin{equation*}{\arrowQuantaloidl} \mathbin{\circ_{\arrowQuantaloidg}} {\arrowQuantaloidk}= ({\arrowQuantaloidl} \swarrow {\arrowQuantaloidg}) \circ {\arrowQuantaloidg} \circ ({\arrowQuantaloidg}\searrow {\arrowQuantaloidk})={\arrowQuantaloidl} \circ ({\arrowQuantaloidg}\searrow {\arrowQuantaloidk})=({\arrowQuantaloidl}\swarrow {\arrowQuantaloidg}) \circ {\arrowQuantaloidk}.\end{equation*}
This composition $\circ_{\arrowQuantaloidg}$ is clearly join-preserving in each variable separately --- i.e.,~
 \begin{equation*}
 \hom ({\arrowQuantaloidg},{\arrowQuantaloidh})\otimes  \hom ({\arrowQuantaloidf},{\arrowQuantaloidg}) \to^{ \circ_{\arrowQuantaloidg}} \hom ({\arrowQuantaloidf},{\arrowQuantaloidh})
 \end{equation*}
 is join-preserving, where $\otimes$ denotes the tensor product in $\cat{Sup}$.\footnote{Due to the universal property of the tensor product, any map $X\times Y\rightarrow Z$ that preserves arbitrary joins in each variable separately can be identified with a join preserving map $X\otimes Y \rightarrow Z$.}
\item Given a pair of objects $\arrowQuantaloidf$ and $\arrowQuantaloidg$ in the category $D\Q$, $\arrowQuantaloidf$ is a right unit and $ \arrowQuantaloidg$ is a left unit for the hom-space $\hom ({\arrowQuantaloidf},{\arrowQuantaloidg})$. In particular, $\hom ({\arrowQuantaloidf},{\arrowQuantaloidf})$ forms a unital quantale w.r.t.\ the composition operation $\circ_{\arrowQuantaloidf}$, and $ \arrowQuantaloidf$ itself is the unit element of $\hom ({\arrowQuantaloidf},{\arrowQuantaloidf})$.
\end{enumerate}
Moreover, there exists an embedding functor from $\Q$ 
 into $D\Q$, defined as follows. Objects ${\objectQuantaloida}\in \Q_0$ are identified with their respective identity morphisms
 $1_{\objectQuantaloida}$. Under this identification, the hom-spaces $\hom ({\objectQuantaloida},{\objectQuantaloidb})$ and $\hom  (1_{\objectQuantaloida},1_{\objectQuantaloidb})$ coincide, and this correspondence preserves the related compositions.
\\[2pt]
(2) When extending 
   a small involutive quantaloid $(\Q,j)$ to an involutive quantaloid via diagonal arrows, the previous construction 
must be modified.
 First, recall that a morphism $\arrowQuantaloidf$ of $\Q$ is \emph{hermitian} if $\dom (\arrowQuantaloidf)=\codom ( \arrowQuantaloidf)$ and $j(\arrowQuantaloidf)= \arrowQuantaloidf$.  In particular, identity morphisms in $\Q$ are always hermitian. The set of objects of $D\Q$ is then defined as the set of all hermitian morphisms of $\Q$ and is consequently a subset of $\morph(\Q)$. 
   The constructions of the composition and identities from (1) remain unchanged.
Since now $\arrowQuantaloidf$ and $\arrowQuantaloidg$ are hermitian, the involution $j$ on $\Q$ extends naturally to $D\Q$ by setting $\overline{j}(\arrowQuantaloidk):=j(\arrowQuantaloidk)$ for all $\arrowQuantaloidk\in \hom (\arrowQuantaloidf,\arrowQuantaloidg)$. With these modifications, $(D\Q,\overline{j})$ becomes an involutive quantaloid, and there exists also an embedding functor $\!\xy \morphism(0,0)/{ (}->/<500,0>[(\Q,j)`(D\Q,\overline{j});]\endxy\!$ as described in (1).
\end{remark}

In this context, we recall that a quantale $(\alg{{Q}},\ast)$ is a semigroup in $\cat{Sup}$, and an involution $j\colon\alg{{Q}}\to \alg{{Q}}$ on a quantale satisfies  $j\circ j=1_{\alg{{Q}}}$ and is an anti-homomorphism ---  i.e.,~a join-preserving map making the following diagram commutative:
\[
\bfig
\square(0,0)|alra|/`>`>`>/<1200,300>[\alg{{Q}}\otimes \alg{{Q}}`\alg{{Q}}\otimes \alg{{Q}}`\alg{{Q}}`\alg{{Q}};`\ast`\ast`j]
\morphism(0,300)<600,0>[\alg{{Q}}\otimes \alg{{Q}}`\alg{{Q}}\otimes \alg{{Q}};j\otimes j]
\morphism(600,300)<600,0>[\alg{{Q}}\otimes \alg{{Q}}`\alg{{Q}}\otimes \alg{{Q}};c_{\alg{{Q}}\alg{{Q}}}]
   \efig
\]
Here, $c_{\alg{{Q}}\alg{{Q}}}$ is a component of the symmetry in $\cat{Sup}$. For simplicity, we write $p^{\prime}$ for $j(p)$, where $p \in \alg{{Q}}$. Note also that a unital and involutive quantale is precisely the hom-space of an involutive quantaloid with a single object.

Since any unital quantale (i.e.,~a monoid in $\cat{Sup}$) can be identified with the hom-space of a quantaloid with a single object, the construction in Remark~\ref{newremark1.1}\,(1) shows that any \emph{unital} quantale $(\alg{{Q}},\ast,e)$ can be embedded into a quantaloid $D\alg{{Q}}$. Furthermore, every unital quantale can be embedded into a unital and involutive quantale via a unital quantale homomorphism (see\ \cite[Sect.~2]{GutHoh25} for details). Therefore, in the next remark, we focus on this situation and present the details of the constructions mentioned in Remark~\ref{newremark1.1}\,(2).

\begin{remark} \label{newremark1.2}
Let $\alg{{Q}}=(\alg{{Q}},\ast,e,\mbox{}^{\prime})$ be a unital and involutive quantale, which we extend to the involutive quantaloid $(D\alg{{Q}},j)$ by means of diagonal arrows. Then the details are as follows.
\begin{enumerate}[label=\textup{--},leftmargin=12pt,topsep=3pt,itemsep=0pt
]
\item The objects of $D\alg{{Q}}$ are hermitian elements of $\alg{{Q}}$ --- i.e.,~$D\alg{{Q}}_0=\set{{\elementQa}\in \alg{{Q}}\mid {\elementQa}^{\prime}={\elementQa}}$.
\item For each ${\elementQa},{\elementQb} \in D\alg{{Q}}_0$, the hom-space $\hom ({\elementQa},{\elementQb})$ is defined by:
\begin{align*}
\hom ({\elementQa},{\elementQb})&=\set{{\elementQlambda}\in \alg{{Q}}\mid \exists {\elementQlambda}_1,{\elementQlambda}_2 \in \alg{{Q}}:\,\ {\elementQlambda}= {\elementQlambda}_1 \ast {\elementQa}={\elementQb}\ast {\elementQlambda}_2
}\\
&=\set{{\elementQlambda}\in \alg{{Q}}\mid  {\elementQlambda}= ({\elementQlambda}\swarrow {\elementQa}) \ast {\elementQa}={\elementQb}\ast ({\elementQb}\searrow {\elementQlambda})
}
\end{align*}
and forms a complete sublattice of $\alg{{Q}}$ in the sense of $\cat{Sup}$. In particular, the hom-space associated with the unit of $\alg{{Q}}$ coincides with $\alg{{Q}}$ --- i.e.,~$\hom ({e},{e})=\alg{{Q}}$. Moreover, the hom-spaces whose domain or codomain is the object $\bot$ consist of a single arrow. Hence, $\bot$ is the zero object in the underlying ordinary category of $D\alg{{Q}}$.
\item The composition of morphisms ${\elementQlambda}\in \hom ({\elementQa},{\elementQb})$ and ${\elementQmu}\in \hom ({\elementQb},{\elementQc})$ is defined by:
\[{\elementQmu} \mathbin{\circ_{\elementQb}} {\elementQlambda}= ({\elementQmu} \swarrow {\elementQb}) \ast {\elementQb} \ast ({\elementQb}\searrow {\elementQlambda})={\elementQmu} \ast ({\elementQb}\searrow {\elementQlambda})=({\elementQmu}\swarrow {\elementQb}) \ast {\elementQlambda}.
\]
This composition $\circ_{\elementQb}$ is join-preserving in each variable separately. Moreover, since 
for all ${\elementQlambda}\in\hom ({\elementQb},{\elementQb})$ we have ${\elementQb} \mathbin{\circ_{\elementQb}} {\elementQlambda}={\elementQlambda}= {\elementQlambda} \mathbin{\circ_{\elementQb}} {\elementQb}$, it   
    follows that ${\elementQb}$ acts as the unit of the quantale $\hom ({\elementQb},{\elementQb})$. 
 It is also straightforward to verify that the composition coincides with the quantale multiplication if and only if all hermitian elements of $\alg{Q}$ are idempotent. In this context the quantale multiplication is read from right to left.
\item The involution  $j$ on $D\alg{{Q}}$ is inherited from the involution $\mbox{}^{\prime}$ on $\alg{{Q}}$ --- i.e.,~$j(\elementQlambda)=\elementQlambda^{\prime}$.
\end{enumerate}
If $\alg{{Q}}=(\alg{{Q}},\ast,e)$ is a commutative and unital quantale (which is trivially an involutive quantale w.r.t.\ the identity of $\alg{{Q}}$), then the residuals satisfy: 
\begin{equation*}\elementQa\rightarrow \elementQb:=\elementQa\searrow \elementQb=\elementQb \swarrow \elementQa
\end{equation*}
 for all ${\elementQa},{\elementQb} \in \alg{{Q}}$. In this case we adopt the notation $\rightarrow$ for the residual, and consequently the hom-spaces in $D\alg{{Q}}$ take the form: 
\begin{equation*}
\hom ({\elementQa},{\elementQb})=\set{{\elementQlambda}\in \alg{{Q}}\mid  {\elementQlambda}= {\elementQa}\ast ({\elementQa}\rightarrow {\elementQlambda})={\elementQb}\ast ({\elementQb}\rightarrow {\elementQlambda})}.
\end{equation*}
 Thus, $(D\alg{{Q}},j)$ becomes an involutive quantaloid, where the involution is 
 the bijection 
 \[j\colon\hom ({\elementQa},{\elementQb})\to \hom ({\elementQb},{\elementQa}),\qquad j(\newmorphism{16}{\elementQa}{\elementQlambda}{\elementQb})=\newmorphism{16}{\elementQb}{\elementQlambda}{\elementQa}.\]
Furthermore, each object ${\elementQa}\in\alg{{Q}}$ is the unit of the its endomorphism quantale $\hom ({\elementQa},{\elementQa})
$, and the composition of  ${\elementQlambda}\in \hom ({\elementQa},{\elementQb})$ and ${\elementQmu}\in \hom ({\elementQb},{\elementQc})$ is given by:
\stepcounter{num} 
\begin{equation}\label{eqn1.2} {\elementQmu} \mathbin{\circ_{\elementQb}} {\elementQlambda}= {\elementQb} \ast ({\elementQb}\rightarrow {\elementQmu})\ast ({\elementQb}\rightarrow {\elementQlambda})={\elementQmu} \ast ({\elementQb}\rightarrow {\elementQlambda})= {\elementQlambda}\ast ({\elementQb}\rightarrow {\elementQmu}).
\end{equation}
Finally, if $\alg{{Q}}$ is commutative and integral (i.e.,~the unit $e$ is the top element of $\alg{{Q}}$) then for all ${\elementQlambda}\in \hom ({\elementQa},{\elementQb})$, the relation ${\elementQlambda}\le{\elementQa}\wedge{\elementQb}$ holds. In this setting, the top element of $\hom ({\elementQa},{\elementQa})$ coincides with ${\elementQa}$, making each endomorphism quantale $\hom ({\elementQa},{\elementQa})$ integral. 
\end{remark}
\begin{histremark}\label{histremark1.3} If $\alg{{Q}}$ is an integral and idempotent quantale (i.e.,~a frame), then the quantaloid $(D\alg{{Q}},j) $ corresponds to a construction originally introduced by Walters in 1981 (\cite{Walters81}).
\end{histremark}

We now generalize this construction to the setting of involutive quantales, which need not be unital.
 
\begin{remark} (Cf.\ \cite[Sect.~6]{HK11})\label{newremark1.3} Let $\alg{{Q}}=(\alg{{Q}},\ast,\mbox{}^{\prime})$ be an involutive quantale. Then $\alg{{Q}}$ induces an involutive quantaloid $(D\alg{{Q}},j)$ as follows. We begin by fixing some terminology: An element $\elementQa\in \alg{{Q}}$ is \emph{self-divisible} if it is both a left \emph{and} a right divisor of itself. Clearly, every idempotent element of $\alg{{Q}}$ is self-divisible.
With this in place, the construction of the involutive quantaloid $(D\alg{{Q}},j)$ proceeds as follows:
\begin{enumerate}[label=\textup{--},leftmargin=12pt,topsep=3pt,itemsep=0pt
]
\item The objects of $D\alg{{Q}}$ are all self-divisible and hermitian elements of $\alg{{Q}}$.
\item For each pair ${\elementQa},{\elementQb} \in D\alg{{Q}}_0$, the hom-space $\hom ({\elementQa},{\elementQb})$ is defined as in Remark~\ref{newremark1.2}, namely:\footnote{In comparison with \cite[Sect.~6]{HK11} and \cite[Ex.~1.3]{ho11}, we note that in this presentation we adopt the convention that, in the case of idempotent quantales, the quantale multiplication is interpreted as composition in the categorical sense. Furthermore, we do not impose the condition that elements of $\hom (\elementQa, \elementQb)$ must be below $\elementQa\wedge  \elementQb$. Instead, we define $\hom (\elementQa, \elementQb)$ as the set of elements ${\elementQlambda}\in \alg{{Q}}$ such that $\elementQa$ is a right divisor of ${\elementQlambda}$ and $\elementQb$ is a left divisor of ${\elementQlambda}$. This approach was presented by H.~Lai and D.~Zhang at the 3rd International Conference on Quantitative Logic and Soft Computing, Xi'an, China, 2012.
}
\begin{equation*}
\hom ({\elementQa},{\elementQb})
=\set{{\elementQlambda}\in \alg{{Q}}\mid  {\elementQlambda}= ({\elementQlambda}\swarrow{\elementQa}) \ast {\elementQa}={\elementQb}\ast ({\elementQb}\searrow{\elementQlambda}}
\end{equation*}
\end{enumerate}
and is a complete sublattice  of $\alg{Q}$ in the sense of $\cat{Sup}$,  
since $\ast$ is join-preserving in each variable separately. Again the universal lower bound in $\alg{{Q}}$ is the zero object in the underlying ordinary category of $D\alg{{Q}}$.

 The definitions of \emph{composition} and  \emph{involution} remain as in Remark~\ref{newremark1.2}, including the treatment of commutative quantales.
In particular, since all objects in $D\alg{{Q}}$ are \emph{self-divisible}, each object ${\elementQa}$ again serves as the unit of the quantale $\hom ({\elementQa},{\elementQa})$. If all self-divisible and hermitian elements of $\alg{Q}$ are idempotent, then again the composition coincides with the quantale multiplication.
\end{remark}

This previous construction extends that of Remark~\ref{newremark1.2}: if the involutive quantale $\alg{{Q}}=(\alg{{Q}},\ast,\mbox{}^{\prime})$ is unital, then every element is self-divisible, and the two constructions of $(D\alg{{Q}},j)$ coincide. 
However, in the non-unital case, some elements of $\alg{{Q}}$ may not be self-divisible. As a result, the involutive quantaloid  $(D\alg{{Q}},j)$ may also arise from a unital subquantale of $\alg{{Q}}$, as illustrated in the next example.

\begin{example}\label{exam1.5} Consider an involutive quantale $\alg{{Q}}$ with at least $3$ elements, satisfying the  condition:
\[ a\ast b =\top,\qquad \text{for all }a ,b \in \alg{{Q}}\setminus\set{\bot}.\]  
Under this assumption, every element of $\alg{{Q}}\setminus \set{\bot,\top}$ fails to be self-divisible. In particular, $\alg{{Q}}$ is not unital and the set of objects in the associated quantaloid reduces to $D\alg{{Q}}_0=\set{\bot,\top}$. Consequently, the involutive quantaloid $(D\alg{{Q}},j)$ is isomorphic to 
 $(D\boldsymbol{2},j)$, where $\boldsymbol{2}$ denotes the \emph{unital} and commutative quantale on the  \donotbreakdash{$2$}chain $\set{\bot,\top}$.\vskip-15pt
\[
  \begin{tikzcd}[column sep=10ex,row sep=2ex,
 every label/.append style = {font = \tiny}]
\bot\ar[r,bend left=10]
\ar[out=150, in=90, loop, distance=2em,"\bot" description] & \top\ar[l,bend left=12]
\ar[out=85, in=35, loop, distance=2em,"\bot" description]\ar[out=95, in=25, loop, distance=3.5em,"\top" description]
\end{tikzcd}
\]
\end{example}
%
  
However, in general, the construction described in Remark~\ref{newremark1.3} yields quantaloids that cannot be derived from a unital quantale. 
 We present three examples illustrating this situation.
 
\begin{example}\label{exam1.7} Let $\alg{{Q}}$ be a commutative quantale defined on the lattice $\set{\bot,a,b,\top}$, with the following Hasse diagram and multiplication table:
\[
\renewcommand\arraystretch{1.}
\setlength\doublerulesep{0pt}
{\footnotesize\begin{tikzcd}[column sep=10pt,row sep=3pt]
&\top\arrow[-]{dl}\arrow[-]{dr}\\
a&&b\\
&\bot\arrow[-]{ul}\arrow[-]{ur}
\end{tikzcd}}
\qquad\qquad
{\footnotesize
\begin{tabular}{c||c|c|c}
$\star$  & $a$ & $b$ & $\top$\\
\hline\hline
$a$ &  $a$ & $\top$ & $\top$\\\hline
$b$ &  $\top$ & $b$ & $\top$\\\hline
$\top$ &  $\top$ & $\top$ & $\top$\\
\end{tabular}}
\]
Since all elements of $\alg{{Q}}$ are idempotent and therefore self-divisible, the involutive quantaloid $(D\alg{{Q}},j)$
 is given by $ D\alg{{Q}}_0=\set{\bot,a,b,\top}$ and the corresponding hom-spaces whose domain and codomain are different from $\bot$ have the following form: 
\begin{enumerate}[label=\textup{--},leftmargin=12pt,topsep=3pt,itemsep=0pt
]
\item
   $\hom (a,a)=\set{\bot,a,\top}$, $\hom (b,b)=\set{\bot,b,\top}$ and $\hom (\top,\top)=\set{\bot,\top}$.
   \item For distinct $ x,y\in \set{a,b,\top}$, $\hom (x,y)=\set{\newmorphism{18}{x}{\bot}{y},\newmorphism{18}{x}{\top}{y}}$,
which are isomorphic to the \donotbreakdash{$\boldsymbol{2}$}chain.
 \end{enumerate}
Thus, the structure of $D\alg{{Q}}$ includes the following objects and arrows:
\[\begin{tikzcd}[column sep=10ex,row sep=2.ex,
 every label/.append style = {font = \tiny}]
&\top\ar[dd,bend left=5]
\ar[ld,bend left=5]
\ar[rd,bend left=5]
\ar[ld,bend left=10]
\ar[rd,bend left=10]
\ar[out=115, in=65, loop, distance=1.5em,"\bot" description]
\ar[out=125, in=55, loop, distance=2.5em,"\top" description]\\
a\ar[rr,bend left=3]
\ar[rr,bend left=6]
\ar[out=205, in=155, loop, distance=2.em,"\bot" description]
\ar[out=215, in=145, loop, distance=3.25em,"a" description]
\ar[out=225, in=135, loop, distance=5.5em,"\top" description]
\ar[ru,bend left=5]
\ar[ru,bend left=10]
\ar[rd,bend left=3]
&& b\ar[ul,bend left=5]
\ar[ul,bend left=10]
\ar[out=25, in=-25, loop, distance=2.em,"\bot" description]
\ar[out=35, in=-35, loop, distance=3.25em,"b" description]
\ar[out=45, in=-45, loop, distance=5.5em,"\top" description]
\ar[ll,bend left=3]
\ar[ll,bend left=6]
\ar[ld,bend left=5]
\\
&\bot\ar[uu,bend left=5]
\ar[ru,bend left=5]
\ar[lu,bend left=5]
\ar[out=305, in=235, loop, distance=1.5em,"\bot" description] 
\end{tikzcd}\] 

Note that this quantaloid \emph{cannot} be induced by a unital quantale as described in Remark~\ref{newremark1.2}. Indeed, since $D\alg{{Q}}$ has $4$ elements and every hom-space contains at most $3$ elements, none of $a$, $b$ or $\top$ can serve as a unit in a quantale with at least 4 elements.
\end{example}

\begin{example}\label{exam1.8} Let $\alg{{Q}}_{\boldsymbol{2}}$ denote the \emph{quantization of $\boldsymbol{2}$} (cf.\ \cite[Subsect.~2.2]{GutHoh25}). Specifically, the lattice $\alg{{Q}}_{\boldsymbol{2}}=\set{\bot,b,a_r,a_{\ell},c,\top}$ consists of $6$ elements, with the following Hasse diagram and multiplication table:
 \[
\renewcommand\arraystretch{1.}
\setlength\doublerulesep{0pt}
{\footnotesize\begin{tikzcd}[column sep=6pt,row sep=2pt]
&\top \arrow[-]{d} &\\
&c\arrow[-]{dl}\arrow[-]{dr}\\
{a_{\ell}}&&{a_r}\\
&b\arrow[-]{ul}\arrow[-]{ur}&\\
&\bot \arrow[-]{u} 
\end{tikzcd}}
\qquad\qquad
{\footnotesize
\begin{tabular}{c||c|c|c|c|c}
$\star$  & $b$ & $a_{\ell}$ & $a_r$ & $c$ & $\top$\\
\hline\hline
$b$ &  $b$ & $b$ & $a_r$ & $a_r$ & $a_r$\\\hline
$a_{\ell}$ & $a_{\ell}$ & $a_{\ell}$ & $\top$ & $\top$ &$\top$\\\hline
$a_r$ & $b$ & $b$ & $a_r$ &$a_r$ & $a_r$\\\hline
$c$ &$a_{\ell}$ & $a_{\ell}$ & $\top$ & $\top$ & $\top$\\\hline
$\top$ &  $a_{\ell}$ & $a_{\ell}$ & $\top$ & $\top$ & $\top$\\
\end{tabular}}
\]
  The quantale $\alg{{Q}}_{\boldsymbol{2}}$ is non-commutative, and the involution is determined by $\top^{\prime}=\top$, $c^{\prime}=c$, $a_{\ell}^{\prime}=a_r$, $a_r^{\prime}=a_{\ell}$, $ b^{\prime}=b$ and  $\bot^{\prime}=\bot$. The set of  hermitian and self-divisible elements is $\set{\bot,b,\top}$. Therefore the \emph{involutive quantaloid} $\Q_{\boldsymbol{2}}:=(D\alg{{Q}}_{\boldsymbol{2}},j)$ induced by $\alg{{Q}}_{\boldsymbol{2}}$ is given by the set of objects $\set{\bot,b,\top}$ and all corresponding hom-spaces whose domain and codomain are different from $\bot$, are \donotbreakdash{$\boldsymbol{2}$}valued, namely,
 $\hom (\top,\top)=\set{\bot,\top}$, $\hom (\top,b)=\set{\bot,a_r}$, $ \hom (b,\top)=\set{\bot,a_{\ell}}$, and $\hom (b,b)=\set{\bot, b}$.
 
Thus, the structure of $\Q_{\boldsymbol{2}}$ includes the following objects and arrows:
\[\begin{tikzcd}[column sep=10ex,row sep=2.ex,every label/.append style = {font = \tiny}]
&b\ar[dl,bend left=5]
\ar[dr,bend left=5]
\ar[out=115, in=65, loop, distance=1.75em,"\bot" description]
\ar[out=125, in=55, loop, distance=3.em,"b" description]
\ar[dr,bend left=10]
\\
\bot\ar[out=150, in=90, loop, distance=3em,"\bot" description]
\ar[rr,bend left=3]
\ar[ur,bend left=4]
& &\top\ar[ll,bend left=3]
\ar[ul,bend left=5]
\ar[out=85, in=35, loop, distance=2.em,"\bot" description]
\ar[out=95, in=25, loop, distance=3.5em,"\top" description]
\ar[ul,bend left=10]
\end{tikzcd}
\]
Note again that this quantaloid cannot be induced by a unital quantale as described in Remark~\ref{newremark1.2}, 
since all hom-spaces do not contain $3$ elements. Moreover, the relations $a_{\ell}\ast a_r=\top$ and $a_r\ast a_{\ell}=b$ highlight that the
 objects $b$ and $\top$ in $\Q_{\boldsymbol{2}}$ are isomorphic in the underlying ordinary category of $\Q_{\boldsymbol{2}}$, which is equivalent to a category consisting of two objects, where one of them is the zero object. This last observation underlines the difference between the unital quantale $\boldsymbol{2}$ and its quantization $\alg{Q}_{\boldsymbol{2}}$.
\end{example}

The next example extends the quantization of $\boldsymbol{2}$ 
in such a way that the previously non-self-divisible element $c$ becomes self-divisible.
  
\begin{example} (cf.\ $\alg{R}_4$ in \cite{GutHoh24}) \label{exam1.10}
Let $\alg{{Q}}$ be the quantale defined on the lattice 
\[\set{\bot,b,a_{\ell},a_{r},\newal,\newar, c, \top}
\]
 with the following Hasse diagram and multiplication table:
\[
\renewcommand\arraystretch{1.1}
\setlength\doublerulesep{0pt}
{\footnotesize\begin{tikzcd}[column sep=13pt,row sep=4pt]
&\top \arrow[-]{d} &\\
&c\arrow[-]{dl}\arrow[-]{dr}\\
\newal&&\newar\\
{a_{\ell}}\arrow[-]{u}&&{a_{r}}\arrow[-]{u}\\
&b\arrow[-]{ul}\arrow[-]{ur}\\
&\bot \arrow[-]{u} &
\end{tikzcd}}
\qquad\qquad
{\footnotesize
\begin{tabular}{c||c|c|c|c|c|c|c}
$\ast$ 
& $b$ & $a_{\ell}$ & $a_r$ & $\newal$& $\newar$ & $c$  & $\top$ \\
\hline\hline
$b$ 
& $b$ & $b$ & $a_r$ & $b$ & $a_r$ & $a_r$ &  $a_r$ \\\hline
$a_{\ell}$ 
& $a_{\ell}$ & $a_{\ell}$ & $\top$ & $a_{\ell}$ & $\top$ & $\top$ &  $\top$ \\\hline
$a_r$ 
& $b$ & $b$ & $a_r$ &$a_r$ & $a_r$ & $a_r$ &  $a_r$\\\hline
$\newal$ 
& $a_{\ell}$ & $a_{\ell}$ & $\top$ & $\newal$ & $\top$ & $\top$ &  $\top$ \\\hline
$\newar$ 
& $b$ & $a_{\ell}$ & $a_r$ & $c$& $\newar$ & $c$ &  $\top$ \\\hline
 $c$ 
& $a_{\ell}$ & $a_{\ell}$ & $\top$ & $c$ & $\top$ & $\top$ &  $\top$ \\\hline
 $\top$ 
 & $a_{\ell}$ & $a_{\ell}$ & $\top$ & $\top$ & $\top$ & $\top$ &  $\top$ 
 \end{tabular}}
\]
Hence $\alg{{Q}}$ is non-commutative, and the involution is defined by $\top^{\prime}=\top$, $c^{\prime}=c$, $\newal^{\prime}=\newar$, $\newar^{\prime}=\newal$, $a_{\ell}^{\prime}=a_r$, $a_r^{\prime}=a_{\ell}$, $ b^{\prime}=b$ and $\bot^{\prime}=\bot$. Although $c$ is not idempotent, it is self-divisible, and thus the set of hermitian and self-divisible elements is $\set{\bot,b,c,\top}$. Therefore, the involutive quantaloid $(D\alg{{Q}},j)$ is given by $D\alg{{Q}}_0=\set{\bot,b,c,\top}$, and the hom-spaces whose domain and codomain are different from $\bot$, are the following ones:  $\hom (\top,\top)=\set{\bot,\top}$, $\hom (c,c)=\set{\bot,c,\top}$, $\hom (b,b)=\set{\bot, b}$, $\hom (\top,c)=\set{\bot,\top}$, $ \hom (c,\top)=\set{\bot,\top}$, $\hom (c,b)=\set{\bot,a_r}$, $\hom (b,c)=\set{\bot,a_{\ell}}$, $(\top,b)=\set{\bot,a_r}$, and $\hom (b,\top)=\set{\bot, a_{\ell}}$.

Thus, the structure of $D\alg{{Q}}$ includes the following objects and arrows:
\[\begin{tikzcd}[column sep=10ex,row sep=2.ex,every label/.append style = {font = \tiny}]
&b\ar[dl,bend left=5]
\ar[out=115, in=65, loop, distance=2em,"\bot" description]
\ar[out=125, in=55, loop, distance=3.5em,"b" description]
\ar[drr,bend left=3]
\ar[r,bend left=5]
\ar[r,bend left=10]
&c
\ar[out=115, in=65, loop, distance=1.75em,"\bot" description]
\ar[out=125, in=55, loop, distance=2.90em,"c" description]
\ar[out=135, in=45, loop, distance=4.55em,"\top" description]
\ar[dr,bend left=5]
\ar[dr,bend left=10]
\ar[l,bend left=5]
\ar[l,bend left=10]
\ar[dll,bend left=3]
\\
\bot\ar[out=150, in=90, loop, distance=3em,"\bot" description]
\ar[rrr,bend left=2]
\ar[urr,bend left=3]
\ar[ur,bend left=4]
& &
&\top
\ar[ull,bend left=3]
\ar[lll,bend left=2]
\ar[ul,bend left=5]
\ar[ul,bend left=10]
\ar[out=85, in=35, loop, distance=2.em,"\bot" description]
\ar[out=95, in=25, loop, distance=3.5em,"\top" description]
\end{tikzcd}
\]
Since $\alg{{Q}}_{\boldsymbol{2}}$ is a subquantale of $\alg{{Q}}$, the quantaloid $(\Q_{\boldsymbol{2}},j)$ is a involutive subquantaloid of $(D\alg{{Q}},j)$.
\end{example}

We conclude this section with a property of quantaloids that plays an important role in Section~\ref{section4}.
 
Let $\Q$ be a quantaloid and let ${\objectQuantaloida}\in\Q_0$.  Then $\Q$ is said to be \emph{\donotbreakdash{$\objectQuantaloida$}stable}, if the following condition holds for all ${\objectQuantaloidb}\in\Q_0$\textup:
\begin{equation*}
1_{\objectQuantaloidb}\le (\tbigvee \hom (\objectQuantaloida,\objectQuantaloidb))\circ (\tbigvee \hom (\objectQuantaloidb,\objectQuantaloida)).
\end{equation*}
  
A quantaloid $\Q$ is \emph{stable} if there exists an object $\objectQuantaloida\in \Q_0$ such that $\Q$ is \donotbreakdash{$\objectQuantaloida$}stable.

\begin{remarks} \label{remarks1.12} (1) If $\alg{{Q}}$ is an involutive quantale and ${\elementQa}\in D\alg{{Q}}_0$, then $(D\alg{{Q}},j)$ is \donotbreakdash{$\elementQa$}stable if and only if ${\elementQb}\le (\tbigvee \hom ({\elementQa},{\elementQb}))\mathbin{\circ_{{\elementQa}}} (\tbigvee \hom ({\elementQb},{\elementQa}))$ for all ${\elementQb}\in D\alg{{Q}}_0$. Moreover, if $\alg{{Q}}$ is integral,    
then $\tbigvee \hom ({\top},{\elementQb})=
\newmorphism{16}{\top}{\raisebox{3pt}{$\elementQb$}}{\elementQb}$, $\tbigvee \hom ({\elementQb},{\top})=\newmorphism{16}{\elementQb}{\raisebox{3pt}{$\elementQb$}}{\top}$ and $(\newmorphism{16}{\top}{\raisebox{3pt}{$\elementQb$}}{\elementQb})\mathbin{\circ_{\top}}\newmorphism{16}{\elementQb}{\elementQb}{\top}=b\ast b$ hold in $(D\alg{{Q}},j)$, and consequently, $(D\alg{{Q}},j)$ is \donotbreakdash{$\top$}stable if and only if $\alg{{Q}}$ is idempotent --- i.e.,~$\alg{{Q}}$ is a frame (cf.\ Historical Remark~\ref{histremark1.3}).
 \\[2pt]
(2) If $\alg{{Q}}$ is an involutive and integral quantale with a non-idempotent and 
hermitian element $\elementQb$, 
then $(D\alg{{Q}},j)$ is \emph{not} \donotbreakdash{$\top$}stable, since $(\tbigvee \hom (\top,\elementQb)) \mathbin{\circ_{\top}} (\tbigvee \hom (\elementQb,\top))= \elementQb \ast \elementQb < \elementQb$.
\\[2pt]
(3) Regarding Examples  \ref{exam1.7}, \ref{exam1.8} and \ref{exam1.10}, we observe the following:
All involutive quantaloids discussed are stable. More precisely, each one is \donotbreakdash{$\objectQuantaloida$}stable for all $\objectQuantaloida \in D\alg{{Q}}_0\setminus \{\bot\}$.
\end{remarks}

\section{$\Q$-enriched categories}\label{section2}
A \emph{typed set} is an object $X$ of the slice category $\cat{Set}/\Q_0$ --- i.e.,~a map $t\colon X_0\to \Q_0$. Since we treat $t$ as a generic notation, we will also denote it by $|{\phantom{x}}|$, interpreting 
$|{x}|$ as the type of the element $x\in X_0$.
A \emph{\donotbreakdash{$\Q$}enriched category} (or \donotbreakdash{$\Q$}category, for short) is a pair $(X,\alpha)$ where $X=(X_0,t)$ is a typed set, and $\alpha\colon X_0\times X_0 \to \morph(\Q)$ is a hom-arrow-assignment satisfying the following conditions: 
\begin{enumerate}[label=\textup{(C\arabic*)},topsep=3pt,itemsep=0pt
]
\item\label{C1} $\alpha(x,y)\in \hom (|y|,|x|), \qquad x,y\in X_0$.
\item\label{C2} $\alpha(x,y)\circ \alpha(y,z)\le \alpha (x,z), \qquad x,y,z\in X_0.$
\item\label{C3} $ 1_{|x|} \le \alpha(x,x), \qquad x\in X_0$.
\end{enumerate}
As an immediate consequence of \ref{C2} and \ref{C3} we obtain the following identity:
\stepcounter{num} 
\begin{equation}\label{n2.1C4} \alpha(x,x)\circ \alpha(x,y)=\alpha(x,y)= \alpha(x,y)\circ \alpha(y,y),\qquad x,y\in X_0.
\end{equation}
In particular, $\alpha(x,x)$ is idempotent w.r.t.\ the composition for all $x\in X_0$.
\begin{comment} Let ${\objectQuantaloida}$ be an object of a quantaloid $\Q$, and let $(X,\alpha)$ be a \donotbreakdash{$\Q$}category. Define the  \emph{\donotbreakdash{${\objectQuantaloida}$}section} of $X$ as $X_{\objectQuantaloida}=\set{x\in X_0\mid  |x|={\objectQuantaloida}}$.
 Then the restriction of $\alpha$ to $X_{\objectQuantaloida}\times X_{\objectQuantaloida}$ yields a quantale-enriched category. This observation allows us to interpret quantaloid-enriched categories as \emph{time-varying} quantale-enriched categories, where the enrichment varies over the objects of the quantaloid.
\end{comment}
\begin{example}\label{exam2.1} Let $\Q$ be a quantaloid, which we now interpret as a typed set --- i.e.,~we consider the identity map $1_{\Q_0}$ on the set of objects of $\Q$ as the type map, and we continue to denote the pair $(\Q_0,1_{\Q_0})$ simply by $\Q$, provided no confusion arises. We define a hom-arrow-assignment $\tau\colon\Q_0\times  \Q_0 \to \morph(\Q)$ 
by $\tau({\objectQuantaloida},{\objectQuantaloidb})= \tbigvee \hom ({\objectQuantaloidb},{\objectQuantaloida})$ for each ${\objectQuantaloida},{\objectQuantaloidb}\in \Q_0$.
Then the pair $(\Q,\tau)$ forms a \donotbreakdash{$\Q$}category.
\end{example}

Let $(X,\alpha)$ and $(Y,\beta)$ be \donotbreakdash{$\Q$}categories. A \emph{distributor}
$\Phi\colon (X,\alpha)\circlearrow(Y,\beta)
$ is a map $\Phi\colon Y_0\times X_0 \to \morph(\Q)$ satisfying the following conditions for all $x,x_1,x_2\in X_0$ and $y,y_1,y_2\in Y_0$:
\begin{enumerate}[label=\textup{(D\arabic*)},topsep=3pt,itemsep=0pt
]
\item\label{D1} $\Phi(y,x)\in \hom (|x|,|y|)$.
\item\label{D2} $\beta(y_1,y_2)\circ \Phi(y_2,x)\le \Phi(y_1,x)$ and $\Phi(y,x_1)\circ \alpha(x_1,x_2)\le \Phi(y,x_2)$.
\end{enumerate}
With regard to \ref{C3} and \ref{D2} every distributor $\Phi\colon (X,\alpha)\circlearrow(Y,\beta)
$ satisfies the following:
\stepcounter{num}
\begin{equation} \label{n2.2CC} \Phi(y,x)=\beta(y,y) \circ \Phi(y,x)=\Phi(y,x)\circ \alpha(x,x), \qquad x\in X_0,\, y\in Y_0.
\end{equation}
The composition of distributors $\Phi\colon (X,\alpha)\circlearrow(Y,\beta)
$ and $\Psi\colon (Y,\beta)\circlearrow(Z,\gamma)
$ is defined by:
\[ (\Psi\otimes \Phi)(z,x)= \tbigvee_{y\in Y _0} \Psi(z,y) \circ \Phi(y,x), \qquad(z,x)\in Z_0\times X_0.
    \]
The hom-arrow-assignment $\alpha\colon (X,\alpha)\circlearrow(X,\alpha)
$ is itself a distributor and serves as the identity w.r.t.\ the composition $\otimes$. Since the pointwise join of distributors is again a distributor, the collection of \donotbreakdash{$\Q$}categories with distributors as morphisms forms a quantaloid, and in particular, a \donotbreakdash{$2$}category.

Two distributors $\Phi\colon (X,\alpha)\circlearrow(Y,\beta)
$ and $\Psi\colon (Y,\beta)\circlearrow(X,\alpha)
$ are adjoint  if and only if the following relations hold for all $x_1,x_2\in X_0$ and $y_1,y_2\in Y_0$:
\[\alpha(x_1,x_2)\le (\Psi\otimes \Phi)(x_1,x_2)\quad \text{and}
 \quad (\Phi\otimes \Psi)(y_1,y_2) \le \beta(y_1,y_2).\]
In light of \ref{D2}, \ref{n2.1C4} and \ref{n2.2CC}, adjointness of distributors is equivalent to the following conditions for all $x\in X_0$ and $y_1,y_2\in Y$:
\begin{enumerate}[label=\textup{(D\arabic*)}, start=3,topsep=3pt,itemsep=0pt
]
\item\label{D3} $1_{|x|} \le \tbigvee_{y\in Y_0} \Psi(x,y) \circ \Phi(y,x) \quad \text{and}\quad \Phi(y_1,x)\circ \Psi(x,y_2)\le \beta(y_1,y_2)$.
\end{enumerate}
We denote this adjointness by $\Phi \dashv \Psi$, where $\Phi$ is the left adjoint distributor and $\Psi$ the right adjoint distributor. Since the right adjoint distributor $\Psi$ is uniquely determined by the left adjoint $\Phi$, $\Psi$ can be expressed by $\Phi$ as follows:
\stepcounter{num}
\begin{equation}\label{n2.2}
\Psi(x,y)= \tbigvee_{y_1\in Y_0}\Phi(y_1,x) \searrow \beta(y_1,y), \qquad x\in X_0,\, y\in Y.
\end{equation}

Let $(X,\alpha)$ and $(Y,\beta)$ be \donotbreakdash{$\Q$}categories.
A \emph{functor} $\varphi\colon(X,\alpha)\to(Y,\beta)$ is a morphism in the slice category $\cat{Set}/\Q_0$ --- i.e.,~a function $\varphi\colon X_0\to Y_0$ making the following diagram commute
\[
\bfig
 \Vtriangle(0,0)|alr|/>`>`>/<450,300>[X_0`Y_0`\Q_0;\varphi`|\phantom{x}|`|\phantom{x}|]
 \efig
\]
and satisfying the following enrichment condition, which has its origin in \cite{EK66}:
\begin{enumerate}[label=\textup{(m)},topsep=3pt,itemsep=0pt
]
\item $\alpha(x_1,x_2) \le \beta(\varphi(x_1),\varphi(x_2)),\qquad x_1,x_2\in X_0$.
\end{enumerate} 

Since the axioms \ref{C2} and \ref{C3} are preserved under pointwisely defined meets of hom-arrow-assignments, the category $\cat{Cat}(\Q)$ of \donotbreakdash{$\Q$}categories and functors is topological over $\cat{Set}/\Q_0$ w.r.t. the forgetful functor from $\cat{Cat}(\Q)$ to $\cat{Set}/\Q_0$. In particular, for any type map $|\phantom{x}|\colon X_0\to \Q_0$ the \emph{discrete} \donotbreakdash{$\Q$}category $(X,\delta)$ is given by:
\[\delta(x,y)=\begin{cases}\bot,&\text{if } x\neq y,\\ 1_{|x|},&\text{if } x=y, \end{cases} 
 \qquad x,y\in X_0.\]
If $\varphi\colon(X,\alpha)\to (Y,\beta)$ is a functor, it induces a pair of adjoint distributors $\varphi_b\dashv \varphi^b$ between $(X,\alpha)$ and $(Y,\beta)$ defined by:
\[ \varphi_b(y,x)=\beta(y,\varphi(x)) \quad \text{and}\quad \varphi^b(x,y)=\beta(\varphi(x),y), \qquad x\in X_0,\, y\in Y_0.
   \]
Obviously this correspondence describes a special relationship between  functors and left adjoint distributors. Anticipating the terminology at the beginning of Section~\ref{section4} this relationship is injective, if the codomain of $\varphi$ is separated. 
 
Let $(X,\alpha)$ be a \donotbreakdash{$\Q$}category. For each element $x\in X_0$ we can associate a singleton \donotbreakdash{$\Q$}category $(\{x\},\gamma)$, where we restrict the type function to $\{x\}$ and
    $\gamma(x,x)=\alpha(x,x)$.
 If $\Phi\colon (X,\alpha)\circlearrow(Y,\beta)$
is left adjoint to thedistributor $\Psi\colon (Y,\beta)\circlearrow(X,\alpha)
$, then for each $x\in X_0$ it induces a left adjoint distributor $\tau_x\colon  (\{x\},\gamma)\circlearrow(Y,\beta)
$ defined by
\[ \tau_x(y,x)=\Phi(y,x), \qquad y\in Y_0.
\]
The corresponding right adjoint distributor $\sigma_x\colon(Y,\beta)\circlearrow  (\{x\},\gamma)$ is determined by 
\[\sigma_x(x,y)=\Psi(x,y),\qquad y\in Y_0.
\]
 Moreover, $\Phi$ is left adjoint if and only if $\tau_x$ is left adjoint for all $x\in X_0$.

\begin{proposition}\label{prop2.1} Let ${\Phi,\Phi^{\prime}\colon (X,\alpha)\circlearrow(Y,\beta)}$ and $\Psi,\Psi^{\prime}\colon(Y,\beta)\circlearrow(X,\alpha)$ be pairs of distributors such that $\Phi \dashv \Psi$, $\Phi^{\prime} \dashv \Psi^{\prime}$, $\Phi^{\prime}\le \Phi$ and $\Psi^{\prime} \le \Psi$.
Then $\Phi=\Phi^{\prime}$ and $\Psi=\Psi^{\prime}$.
\end{proposition}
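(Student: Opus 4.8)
The plan is to run the standard argument showing that, in a poset-enriched bicategory, an adjunction is determined by either one of its two legs, specialised here to the quantaloid of \donotbreakdash{$\Q$}categories and distributors. The only structural facts needed are that $\otimes$ is associative, that it is monotone in each variable, and that the hom-arrow-assignments $\alpha$ and $\beta$ serve as identities for $\otimes$; all of these were recorded above when it was observed that \donotbreakdash{$\Q$}categories and distributors form a quantaloid (hence a $2$-category). Throughout one must keep track of domains and codomains so that each composite $\otimes$ is legitimately formed, remembering the convention that $\Psi\otimes\Phi$ means ``first $\Phi$, then $\Psi$''.

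First I would collect the inequalities at hand. From $\Phi\dashv\Psi$ we have the unit inequality $\alpha\le\Psi\otimes\Phi$ and the counit inequality $\Phi\otimes\Psi\le\beta$; from $\Phi^{\prime}\dashv\Psi^{\prime}$ we have the unit inequality $\alpha\le\Psi^{\prime}\otimes\Phi^{\prime}$. Combining the hypothesis $\Psi^{\prime}\le\Psi$ with monotonicity of $\otimes$ and the counit of the first adjunction yields $\Phi\otimes\Psi^{\prime}\le\Phi\otimes\Psi\le\beta$. Then the chain
\[
\Phi=\Phi\otimes\alpha\le\Phi\otimes(\Psi^{\prime}\otimes\Phi^{\prime})=(\Phi\otimes\Psi^{\prime})\otimes\Phi^{\prime}\le\beta\otimes\Phi^{\prime}=\Phi^{\prime}
\]
gives $\Phi\le\Phi^{\prime}$, and together with the hypothesis $\Phi^{\prime}\le\Phi$ this forces $\Phi=\Phi^{\prime}$.

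For the second equality I would argue symmetrically, this time using $\Phi^{\prime}\le\Phi$ to obtain $\Phi^{\prime}\otimes\Psi\le\Phi\otimes\Psi\le\beta$, and then
\[
\Psi=\alpha\otimes\Psi\le(\Psi^{\prime}\otimes\Phi^{\prime})\otimes\Psi=\Psi^{\prime}\otimes(\Phi^{\prime}\otimes\Psi)\le\Psi^{\prime}\otimes\beta=\Psi^{\prime},
\]
whence $\Psi\le\Psi^{\prime}$ and so $\Psi=\Psi^{\prime}$. Alternatively, once $\Phi=\Phi^{\prime}$ has been established, one may simply invoke the fact that the right adjoint is uniquely determined by the left adjoint via formula \eqref{n2.2}, which immediately gives $\Psi=\Psi^{\prime}$.

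I do not expect any genuine obstacle here: the argument is purely formal, and the only points demanding attention are the placement of the identity distributors on the correct side in $\Phi=\Phi\otimes\alpha$ and $\Psi=\alpha\otimes\Psi$, and the invocations of associativity and two-sided monotonicity of $\otimes$ — all of which are licensed by the quantaloid structure on \donotbreakdash{$\Q$}categories and distributors.
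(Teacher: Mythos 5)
Your argument is correct and is essentially the paper's proof: the chain $\Phi=\Phi\otimes\alpha\le\Phi\otimes(\Psi^{\prime}\otimes\Phi^{\prime})\le(\Phi\otimes\Psi)\otimes\Phi^{\prime}\le\beta\otimes\Phi^{\prime}=\Phi^{\prime}$ is exactly the pointwise computation the paper performs using (\ref{n2.2CC}), \ref{D2} and \ref{D3}. The paper then concludes $\Psi=\Psi^{\prime}$ by uniqueness of right adjoints, which is your stated alternative; your explicit symmetric chain for $\Psi$ is equally valid.
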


\begin{proof} Using (\ref{n2.2CC}) and the axioms  \ref{D2} and \ref{D3} we compute:
\begin{align*}\Phi(y,x)&=\Phi(y,x)\circ \alpha(x,x)\le \tbigvee_{z\in Y}\Phi(y,x) \circ \Psi^{\prime}(x,z)\circ \Phi^{\prime}(z,x)\\
&\le \tbigvee_{z\in Y}\Phi(y,x) \circ\Psi(x,z)\circ \Phi^{\prime}(z,x)\le \tbigvee_{z\in Y} \beta(y,z)\circ \Phi^{\prime}(z,x)=\Phi^{\prime}(y,x).
\end{align*}
Hence $\Phi=\Phi^{\prime}$, and by adjointness, it follows that $\Psi=\Psi^{\prime}$.
\end{proof}
\begin{remark}[Terminal object in $\cat{Cat}(\Q)$]\label{rem2.3} 
Recall from Example~\ref{exam2.1} that the identity map $1_{\Q_0}$ serves as the type map for the \donotbreakdash{$\Q$}category $(\Q,\tau)$. For any \donotbreakdash{$\Q$}category $(X,\alpha)$, the type map $|{\phantom{x}}|\colon X_0\to \Q_0$ is the unique functor $|{\phantom{x}}|\colon(X,\alpha)\to (\Q,\tau)$.
Therefore, $(\Q,\tau)$ is the terminal object in the category $\cat{Cat}(\Q)$.
 \end{remark}
\section{$\Q$-enriched presheaves}\label{section3}
Let ${\objectQuantaloida}\in \Q_0$ be an object of a quantaloid $\Q$. We consider the discrete \donotbreakdash{$\Q$}category $(\{\cdot\}, \delta)$ on the singleton $\{\cdot\}$ with type $\objectQuantaloida$, where $\delta(\cdot,\cdot)=1_{\objectQuantaloida}$.
A distributor $\Pi\colon (X,\alpha)\circlearrow(\{\cdot\},\delta)$ is called a \emph{covariant \donotbreakdash{$\Q$}presheaf of type $\objectQuantaloida$ on $(X,\alpha)$}. Such a distributor can be identified with a map ${\Qpresheavef}\colon X \to \morph(\Q)$ satisfying the following conditions:
\begin{enumerate}[label=\textup{(Q\arabic*)},start=0,topsep=3pt,itemsep=0pt
]
\item\label{Q0} ${\Qpresheavef}(x)\in \hom (|x|,{\objectQuantaloida}), \qquad x\in X_0$.
\item\label{Q1} ${\Qpresheavef}(x_1)\circ\alpha(x_1,x_2) \le {\Qpresheavef}(x_2), \qquad x_1,x_2\in X_0$.
\end{enumerate}
Since ${\objectQuantaloida}$ is the type of ${\Qpresheavef}$, we denote a covariant \donotbreakdash{$\Q$}presheaf by the pair $({\objectQuantaloida},{\Qpresheavef})$, where ${\objectQuantaloida}$ is an object of $\Q$ and ${\Qpresheavef}$ is a arrow-valued map satisfying \ref{Q0} and \ref{Q1}.
\begin{remark}\label{rem3.1} Let ${\objectQuantaloidd}\in \Q_0$ and define the set
$\D_0^{\objectQuantaloidd}=\set{{\arrowQuantaloidf}\in\morph(\Q)\mid\codom ({\arrowQuantaloidf})={\objectQuantaloidd}}$.
On $\D_0^{\objectQuantaloidd}$ the type map $|\phantom{x}|$ is given by the restriction of the domain map $\dom $ to $\D_0^{\objectQuantaloidd}$, so the pair $(\D_0^{\objectQuantaloidd},|\phantom{x}|)$ is denoted by $\D^{\objectQuantaloidd}$. 
We define a hom-arrow-assignment $\varkappa\colon \D_0^{\objectQuantaloidd}\times \D_0^{\objectQuantaloidd} \to \morph(\Q)$ as follows:
\[\varkappa({\objectQuantaloida},{\objectQuantaloidb})=  {\objectQuantaloida}\searrow {\objectQuantaloidb}, \qquad {\objectQuantaloida},{\objectQuantaloidb}\in\D_0^{\objectQuantaloidd}.
 \]
Then, a map ${\Qpresheavef}\colon X_0\to\D_0^{\objectQuantaloidd}$ is a covariant \donotbreakdash{$\Q$}presheaf of type ${\objectQuantaloidd}$ on $(X,\alpha)$ if and only if the diagram 
\[
\bfig
 \Vtriangle(0,0)|alr|/>`>`>/<450,300>[X`\D_0^{\objectQuantaloidd}`\Q_0;{\Qpresheavef}`|\phantom{x}|`\dom ]
 \efig
\]
is commutative (cf.\ \ref{Q0}) and the following relation holds (cf.\ \ref{Q1}):
\[\alpha(x_2,x_1)\le \varkappa({\Qpresheavef}(x_2),{\Qpresheavef}(x_1)), \qquad x_1, x_2\in X_0,\]
--- i.e.,~${\Qpresheavef}\colon (X,\alpha)\to (\D^{\objectQuantaloidd},\varkappa)$ is a functor in $\cat{Cat}(\Q)$.
\end{remark}

A distributor $\Pi\colon (\{\cdot\},\delta)\circlearrow(X,\alpha)$ is called a \emph{contravariant \donotbreakdash{$\Q$}presheaf of type $\objectQuantaloida$ on $(X,\alpha)$}. Such a distributor  can be identified with a map ${\Qpresheaveg}\colon X\to \morph(\Q)$ satisfying the following conditions:
\begin{enumerate}[label=\textup{(P\arabic*)},start=0,topsep=3pt,itemsep=0pt
]
\item\label{P0} ${\Qpresheaveg}(x)\in \hom ({\objectQuantaloida},|x|), \qquad x\in X_0$.
\item\label{P1} $\alpha(x_1,x_2)\circ {\Qpresheaveg}(x_2)\le {\Qpresheaveg}(x_1), \qquad x_1,x_2\in X_0$.
\end{enumerate}
Since ${\objectQuantaloida}$ is the type of ${\Qpresheaveg}$, we denote a contravariant \donotbreakdash{$\Q$}presheaf by the pair $({\objectQuantaloida},{\Qpresheaveg})$ where ${\objectQuantaloida}$ is an object  of $\Q$ and ${\Qpresheaveg}$ is an arrow-valued map satisfying \ref{P0} and \ref{P1}.

\begin{remark}\label{rem3.2} Let ${\objectQuantaloidc}\in \Q_0$ and define the set 
$\R_0^{\objectQuantaloidc}=\set{{\arrowQuantaloidf}\in\morph(\Q)\mid\dom ({\arrowQuantaloidf})={\objectQuantaloidc}}$.
On $\R_0^{\objectQuantaloidc}$ the type map $|\phantom{x}|$ is given by the restriction of the codomain map $\codom $ to $\R_0^{\objectQuantaloidd}$, so the pair $(\R_0^{\objectQuantaloidc},|\phantom{x}|)$ is denoted by $\R^{\objectQuantaloidc}$. 

We define a hom-arrow-assignment $\varrho\colon\R_0^{\objectQuantaloidc}\times \R_0^{\objectQuantaloidc} \to \morph(\Q)$ by:
\[\varrho({\arrowQuantaloidf},{\arrowQuantaloidg})=  {\arrowQuantaloidf}\swarrow {\arrowQuantaloidg}, \qquad {\arrowQuantaloidf},{\arrowQuantaloidg}\in \R_0^{\objectQuantaloidc}.\]
Then, a map ${\Qpresheaveg}\colon X_0\to \R_0^{\objectQuantaloidc}$ is a contravariant \donotbreakdash{$\Q$}presheaf of type ${\objectQuantaloidc}$ on $(X,\alpha)$ if and only if the diagram 
\[ 
\bfig
 \Vtriangle(0,0)|alr|/>`>`>/<450,300>[X`\R_0^{\objectQuantaloidc}`\Q_0;{\Qpresheaveg}`|\phantom{x}|`\codom ]
 \efig
\]
is commutative (cf.\ \ref{P0}) and the following relation holds (cf.\ \ref{P1}):
\[\alpha(x_1,x_2)\le \varrho({\Qpresheaveg}(x_1),{\Qpresheaveg}(x_2)), \qquad x_1,x_2\in X_0,\]
--- i.e.,~${\Qpresheaveg}\colon(X,\alpha)\to (\R^{\objectQuantaloidc},\varrho)$ is a functor in $\cat{Cat}(\Q)$.
\end{remark}

\begin{remark}\label{rem3.3} Let $(\Q,j)$ be an involutive quantaloid. Then the \emph{dual \donotbreakdash{$\Q$}category} $(X,\alpha^{op})$ of $(X,\alpha)$ exists --- i.e.,~
\[\alpha^{op}(x,y)= j(\alpha(y,x)), \qquad x,y \in X_0. \]
Hence, a pair $({\objectQuantaloida},{\Qpresheaveg})$ is a contravariant \donotbreakdash{$\Q$}presheaf on $(X,\alpha)$ if and only if the composition $ j\circ {\Qpresheaveg}\colon (X,\alpha^{op}) \to (D^{\objectQuantaloida},\varkappa)$ is a functor. This condition provides a justification for the use of the term \emph{contravariant} in the chosen terminology.
\end{remark}

We now proceed with the construction of \donotbreakdash{$\Q$}categories consisting of covariant (resp.\ contravariant) \donotbreakdash{$\Q$}presheaves on a given \donotbreakdash{$\Q$}category $(X,\alpha)$.

Let $\Q(X,\alpha)_0$ be the set of all covariant \donotbreakdash{$\Q$}pre\-sheaves on $(X,\alpha)$.     
Define the type map $|\phantom{x}|\colon \Q(X,\alpha)_0 \to \Q_0$ by $|({\objectQuantaloida},{\Qpresheavef})|={\objectQuantaloida}$. 
Consequently the pair $(\Q(X,\alpha)_0,|\phantom{x}|)$ is denoted by $\Q(X,\alpha)$. It is not difficult to show that the pair $(\Q(X,\alpha),\upsilon)$ equipped 
with the hom-arrow-assignment $\upsilon$ defined by:
\begin{align*}\upsilon(({\objectQuantaloida}_1,{\Qpresheavef}_1),({\objectQuantaloida}_2,{\Qpresheavef}_2))&= \tbigwedge_{x\in X_0} ({\Qpresheavef}_1(x) \swarrow {\Qpresheavef}_2(x))\\
&=\tbigvee\{{\objectQuantaloidc}\in\hom ({\objectQuantaloida}_2,{\objectQuantaloida}_1)\mid {\objectQuantaloidc} \circ {\Qpresheavef}_2(x) \le {\Qpresheavef}_1(x) \text{ for all } x\in X_0\}\end{align*}
forms a \donotbreakdash{$\Q$}category. Finally, if $(\{\objectQuantaloida\},\delta)$ is the discrete \donotbreakdash{$\Q$}category with $|\objectQuantaloida|=\objectQuantaloida$, then it is interesting to observe that the \donotbreakdash{$\Q$}categories $(\Q(\{\objectQuantaloida\},\delta),\upsilon)$ and $(\R^{\objectQuantaloida},\varrho)$ are isomorphic.
\\
Similarly, let $P(X,\alpha)_0$ be the set of all contravariant \donotbreakdash{$\Q$}pre\-sheaves on $(X,\alpha)$. Define the type map $| \phantom{x} |\colon P(X,\alpha)_0 \to \Q_0$ by $|({\objectQuantaloida},{\Qpresheaveg})|={\objectQuantaloida}$. Consequently the pair $(P(X,\alpha)_0,|\phantom{x}|)$ is denoted by $P(X,\alpha)$. 
 It is not difficult to show that the pair $(P(X,\alpha),\pi)$ equipped  with the hom-arrow-assignment $\pi$ defined by:
\begin{align*}\pi(({\objectQuantaloida}_1,{\Qpresheaveg}_1),({\objectQuantaloida}_2,{\Qpresheaveg}_2))&= \tbigwedge_{x\in X_0} ({\Qpresheaveg}_1(x) \searrow {\Qpresheaveg}_2(x))\\
&=\tbigvee\set{ {\objectQuantaloidc}\in \hom ({\objectQuantaloida}_2,{\objectQuantaloida}_1)\mid {\Qpresheaveg}_1(x)\circ {\objectQuantaloidc} \le {\Qpresheaveg}_2(x) \text{ for all } x\in X_0}.\end{align*}
forms a \donotbreakdash{$\Q$}category. Again, if $(\{\objectQuantaloida\},\delta)$ is the discrete \donotbreakdash{$\Q$}category with $|\objectQuantaloida|=\objectQuantaloida$, then  the \donotbreakdash{$\Q$}categories $(P(\{\objectQuantaloida\},\delta),\pi)$ and $(\D^{\objectQuantaloida},\varkappa)$ are isomorphic.
\\
Let $({\objectQuantaloida},{\Qpresheavef})$ be a covariant \donotbreakdash{$\Q$}presheaf  on $(X,\alpha)$ and let $({\objectQuantaloida},{\Qpresheaveg})$ be a contravariant \donotbreakdash{$\Q$}pre\-sheaf on $(X,\alpha)$. If the relation $({\objectQuantaloida},{\Qpresheaveg}) \dashv ({\objectQuantaloida},{\Qpresheavef})$ holds --- i.e.,~if $({\objectQuantaloida},{\Qpresheaveg})$ is a left 
adjoint contravariant \donotbreakdash{$\Q$}presheaf, then the triple $\mu=({\Qpresheavef},{\objectQuantaloida},{\Qpresheaveg})$ is called a \emph{presingleton} of the \donotbreakdash{$\Q$}
category $(X,\alpha)$. Referring to (\ref{n2.2}) the following relation holds:
\stepcounter{num}
\begin{equation}\label{n3.0}  {\Qpresheavef}(y)=\tbigwedge_{x\in X_0} ({\Qpresheaveg}(x) \searrow \alpha(x,y)),\qquad y \in X_0.
   \end{equation}
Moreover, the type of a presingleton $({\Qpresheavef},{\objectQuantaloida},{\Qpresheaveg})$ is given by ${\objectQuantaloida}\in \Q_0$. This means that if $\widehat{X}_0$ is the set of all presingletons of the \donotbreakdash{$\Q$}category $(X,\alpha)$,    
   then the type map $|\phantom{x}|\colon\widehat{X}_0 \to \Q_0$ is given by $|({\Qpresheavef},{\objectQuantaloida},{\Qpresheaveg})|={\objectQuantaloida}$ and allows us to define the pair $(\widehat{X}_0,|\phantom{x}|)$, which is denoted by $\widehat{X}$.
  
Since the adjunction $({\objectQuantaloida},{\Qpresheaveg})\dashv ({\objectQuantaloida},{\Qpresheavef})$ is equivalent to the following two conditions:
\begin{enumerate}[label=\textup{(P\arabic*)},topsep=3pt,itemsep=0pt,start=2
]
\item\label{P2} $ 1_{\objectQuantaloida}\le \tbigvee_{x\in X_0} ({\Qpresheavef}(x)\circ {\Qpresheaveg}(x))$,
\item\label{P3} ${\Qpresheaveg}(x_1) \circ {\Qpresheavef}(x_2) \le \alpha(x_1,x_2),\qquad x_1,x_2\in X_0$,
\end{enumerate}
there exists an hom-arrow-assignment $\widehat{\alpha}\colon\widehat{X}_0\times \widehat{X}_0 \to \morph(\Q)$ defined by:
\stepcounter{num}
\begin{equation}\label{NN}
\widehat{\alpha}(({\Qpresheavef}_1,{\objectQuantaloida}_1,{\Qpresheaveg}_1),({\Qpresheavef}_2,{\objectQuantaloida}_2,{\Qpresheaveg}_2))=\tbigvee_{x\in X_0} {\Qpresheavef}_1(x) \circ {\Qpresheaveg}_2(x).
\end{equation}
Thus $(\widehat{X},\widehat{\alpha})$ is a \donotbreakdash{$\Q$}category and is called   the \emph{presingleton space} of $(X,\alpha)$.

Furthermore, there exists a distributor $\Xi\colon (X,\alpha)\circlearrow(\widehat{X},\widehat{\alpha})$ defined by:
\stepcounter{num}
\begin{equation}\label{n3.1AA} \Xi(\mu,x)={\Qpresheavef}(x),\qquad \mu \in \widehat{X}_0,\, x\in X_0.
\end{equation}
This distributor is an isomorphism between both \donotbreakdash{$\Q$}categories $(X,\alpha)$ and $(\widehat{X},\widehat{\alpha})$ in the sense of  \donotbreakdash{$\Q$}\cat{Set} (see Section~\ref{section5} infra).

\begin{example}\label{example3.4} Let $(X,\alpha)$ be a \donotbreakdash{$\Q$}category. 
Then each $x\in X_0$ determines a presingleton $\widetilde{x}=({\Qpresheavef}_x,|x|,{\Qpresheaveg}_x)$,
where ${\Qpresheavef}_x(y)=\alpha(x, y)$ and ${\Qpresheaveg}_x(y)=\alpha(y,x)$ for all $y\in X_0$.
It follows immediately from the axiom \ref{Q1}, \ref{P1}, \ref{C2} and \ref{C3} that  the following relations hold for each $x,x_1,x_2\in X_0$ and $\mu=({\Qpresheavef},{\objectQuantaloida},{\Qpresheaveg})\in \widehat{X}_0$:
\stepcounter{num}
\begin{equation} \label{n3.3}
\widehat{\alpha}(\widetilde{x},\mu)={\Qpresheaveg}(x),\quad \widehat{\alpha}(\mu,\widetilde{x})=
 {\Qpresheavef}(x)\quad\text{and}\quad \widehat{\alpha}(\widetilde{x_1},\widetilde{x_2})=\alpha(x_1,x_2).
 \end{equation}
\end{example} 
\section{Cocomplete  $\Q$-enriched categories}\label{section4}
A first application of \donotbreakdash{$\Q$}pre\-sheaves is the concept of cocompleteness (resp.\ completeness) of \donotbreakdash{$\Q$}categories. As a preparation for the separation axiom and the cocompleteness, we introduce the following lemmas.

 \begin{lemma} \label{lem4.1AA} Let $(X,\alpha)$ be a \donotbreakdash{$\Q$}category and $x,y\in X_0$.  
 \begin{enumerate}[label=\textup{(\alph*)},topsep=3pt,itemsep=0pt
 ,leftmargin=20pt,labelwidth=10pt,itemindent=5pt,labelsep=5pt,topsep=5pt,itemsep=3pt
 ]
\item The following properties are equivalent\textup:
 \begin{enumerate}[label=\textup{(\roman*)},topsep=0pt,itemsep=0pt
 ]
\item $\alpha(x,x)=\alpha(x,y)$ and $\alpha(y,y)= \alpha(y,x)$.
\item $1_{|x|}\le \alpha(x,y)$ and $1_{|y|}\le \alpha(y,x)$.
\end{enumerate}
\item The following properties are equivalent\textup:
 \begin{enumerate}[label=\textup{(\roman*)},start=3,topsep=0pt,itemsep=0pt
 ]
\item $\alpha(x,x)=\alpha(y,x)$ and $\alpha(y,y)= \alpha(x,y)$.
\item $1_{|x|}\le \alpha(y,x)$ and $1_{|y|}\le \alpha(x,y)$.
\end{enumerate}
\item If $|x|=|y|$, then the  assertion {\rm (i)} is equivalent to {\rm (iii)}
\end{enumerate}
  \end{lemma}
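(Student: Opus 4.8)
The plan is to reduce the whole lemma to two symmetric one-line computations — one composing with an identity on the right (for (a)), one on the left (for (b)) — after which (c) comes for free. I would begin with the standing observation that each of the conditions (i)--(iv) can only hold when $|x|=|y|$, since each compares two arrows that a priori live in different hom-spaces; accordingly I set $p:=|x|=|y|$ throughout, so that $1_p$ is a two-sided unit for all the compositions that occur.

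For part (a): the implication (i)$\Rightarrow$(ii) is immediate from \ref{C3}, since $1_p\le\alpha(x,x)=\alpha(x,y)$ and $1_p\le\alpha(y,y)=\alpha(y,x)$. For (ii)$\Rightarrow$(i) I would squeeze $\alpha(x,y)$ between $\alpha(x,x)$ and itself: using $1_p\le\alpha(y,x)$ together with \ref{C2},
\[
\alpha(x,y)=\alpha(x,y)\circ 1_p\le\alpha(x,y)\circ\alpha(y,x)\le\alpha(x,x),
\]
and using $1_p\le\alpha(x,y)$ together with \ref{C2},
\[
\alpha(x,x)=\alpha(x,x)\circ 1_p\le\alpha(x,x)\circ\alpha(x,y)\le\alpha(x,y);
\]
hence $\alpha(x,x)=\alpha(x,y)$, and $\alpha(y,y)=\alpha(y,x)$ follows by exchanging $x$ and $y$, the hypothesis (ii) being symmetric in them. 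Part (b) is the mirror image: one composes with $1_p$ on the left rather than the right. Concretely, (iii)$\Rightarrow$(iv) is again \ref{C3}, while for (iv)$\Rightarrow$(iii) one obtains $\alpha(y,x)=1_p\circ\alpha(y,x)\le\alpha(x,y)\circ\alpha(y,x)\le\alpha(x,x)$ and $\alpha(x,x)=1_p\circ\alpha(x,x)\le\alpha(y,x)\circ\alpha(x,x)\le\alpha(y,x)$, giving $\alpha(x,x)=\alpha(y,x)$, and the $x\leftrightarrow y$ version then gives $\alpha(y,y)=\alpha(x,y)$.

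Finally, for part (c) I would simply remark that once $|x|=|y|=p$, conditions (ii) and (iv) consist of literally the same pair of inequalities $1_p\le\alpha(x,y)$ and $1_p\le\alpha(y,x)$, merely listed in a different order; hence (ii)$\Leftrightarrow$(iv), and chaining with the equivalences established in (a) and (b) yields (i)$\Leftrightarrow$(iii). I do not expect a genuine obstacle here. The only point requiring care is the type bookkeeping — inserting the correct unit ($1_{|y|}$ on the right of $\alpha(x,y)$, $1_{|x|}$ on the left of $\alpha(x,x)$, and so on) and checking that each appeal to \ref{C2} has matching middle indices — together with the harmless remark that all four conditions tacitly force $|x|=|y|$, which is precisely what makes the hypothesis of (c) automatic.
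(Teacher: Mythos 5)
Your proof is correct and follows essentially the same route as the paper's: both directions of (a) come from \ref{C3} one way and from sandwiching $\alpha(x,y)$ between $\alpha(x,x)$ and itself via \ref{C2} and the unit inequalities the other way, with (b) as the mirror image and (c) obtained by comparing the unit-form conditions (ii) and (iv) once $|x|=|y|$. Your preliminary remark that each condition already forces $|x|=|y|$ is a harmless (and accurate) piece of type bookkeeping that the paper leaves implicit.
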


  \begin{proof} From axiom \ref{C3}, it follows that (i) implies (ii). 
  Conversely, assume that (ii) holds. Then, using (\ref{n2.1C4}) and \ref{C2}, we have:
  \[\alpha(x,x) \le \alpha(x,x) \circ \alpha(x,y)=\alpha(x,y)\le\alpha(x,y)\circ \alpha(y,x)  \le \alpha(x,x),
\]
  which implies $\alpha(x,x)= \alpha(x,y)$. Similarly, we obtain $\alpha(y,y)= \alpha(y,x)$. \\
The equivalence in (b) follows analogously to (a), and (c) is a direct corollary of (a) and (b) under the assumption $|x|=|y|$.
  \end{proof}
  
  A \donotbreakdash{$\Q$}category $(X,\alpha)$ is said to be \emph{separated} (or \emph{skeletal}) if the following implication holds for all $x,y \in X_0$:
\[\bigl(|x|=|y|,\quad \alpha(x,x)= \alpha(x,y)\quad \text{and} \quad \alpha(y,y)=\alpha(y,x)\bigr)\implies x=y.
\]
Referring to Lemma~\ref{lem4.1AA}, this condition is equivalent to the implication (cf.\ Example~\ref{example3.4}):
\[\widetilde{x}=\widetilde{y}\implies x=y.\] 
  
\begin{lemma} \label{lem4.0AA} Let $(X,\alpha)$ be a \donotbreakdash{$\Q$}category and let $\eta_{(X,\alpha)}\colon (X,\alpha) \to (P(X,\alpha),\pi)$ be the 
\donotbreakdash{$\Q$}enriched Yoneda embedding, given by
\[\eta_{(X,\alpha)}(x)=(|x|,\alpha(\underline{\phantom{x}},x)), \qquad x\in X_0.\]
Suppose that there exists a functor $\xi\colon (P(X,\alpha),\pi)\to (X,\alpha)$, then the following assertions are equivalent\textup:
 \begin{enumerate}[label=\textup{(\roman*)},topsep=3pt,itemsep=0pt
 ]
\item $\xi$ is left adjoint to $\eta_{(X,\alpha)}$ --- i.e.,~$\alpha(\xi({\objectQuantaloida},{\Qpresheaveg}),x)=\pi(({\objectQuantaloida},{\Qpresheaveg}), \eta_{(X,\alpha)}(x))$ for all $x\in X_0$ and $({\objectQuantaloida},{\Qpresheaveg})\in P(X,\alpha)_0$.
\item $1_{|\xi(\eta_{(X,\alpha)}(x))|}\le \alpha(x,\xi(\eta_{(X,\alpha)}(x)))$ and $1_{|\xi(\eta_{(X,\alpha)}(x))|}\le \alpha(\xi(\eta_{(X,\alpha)}(x)),x)$  for all $x\in X_0$.
\item For all $x\in X_0$ the following relations hold\textup: 
\[\alpha(x,x)=\alpha(x,\xi(\eta_{(X,\alpha)}(x)))\quad\text{and}\quad \alpha(\xi(\eta_{(X,\alpha)}(x)),\xi(\eta_{(X,\alpha)}(x)))=\alpha(\xi(\eta_{(X,\alpha)}(x)),x).\]
\end{enumerate}
\end{lemma}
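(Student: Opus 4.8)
The statement is a standard adjunction-equivalence lemma, and the plan is to run the chain (i) $\Rightarrow$ (ii) $\Rightarrow$ (iii) $\Rightarrow$ (i), using that $\xi$ and $\eta_{(X,\alpha)}$ are already functors (so the enrichment inequalities of condition (m) are available in both directions) together with Lemma~\ref{lem4.1AA}. The key observation to set up first is that, by the definition of $\pi$ and of the Yoneda embedding, $\pi((|x|,\alpha(\underline{\phantom{x}},x)),\eta_{(X,\alpha)}(y))=\pi((|x|,\alpha(\underline{\phantom{x}},x)),(|y|,\alpha(\underline{\phantom{x}},y)))=\alpha(x,y)$ — this is the Yoneda identity for the contravariant presheaf category, and it is what makes (i) read, after substituting, as $\alpha(\xi(\eta_{(X,\alpha)}(x)),x)=\alpha(x,x)$. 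Writing $e(x):=\xi(\eta_{(X,\alpha)}(x))$ for brevity in the proof, condition (i) unpacks to $\alpha(e(x),x)=\alpha(x,x)$ for all $x$.

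\textbf{(i) $\Rightarrow$ (ii).} From $\alpha(e(x),x)=\alpha(x,x)$ and axiom \ref{C3} we get $1_{|x|}\le\alpha(x,x)=\alpha(e(x),x)$, which in particular forces $|e(x)|=|x|$, so $1_{|e(x)|}\le\alpha(e(x),x)$. For the other inequality, apply the enrichment condition (m) for the composite functor $e=\xi\circ\eta_{(X,\alpha)}$: since $1_{|x|}\le\alpha(e(x),x)$, monotonicity under $e$ gives $\alpha(e(x),x)\le\alpha(e(e(x)),e(x))$, but I want $1_{|x|}\le\alpha(x,e(x))$ directly — this I instead obtain by feeding $1_{|x|}\le\alpha(x,x)$ and the adjunction-triangle: from $\alpha(e(x),x)=\alpha(x,x)$ and \eqref{n2.1C4} one computes $\alpha(x,x)=\alpha(x,x)\circ\alpha(e(x),x)\le\alpha(x,e(x))\circ\alpha(e(x),x)$ after applying (m) for $e$ to $1_{|x|}\le\alpha(x,x)$ to produce a factor $1_{|x|}\le\alpha(e(x),e(x))$ sitting between; the cleanest route is to note $\eta_{(X,\alpha)}$ is fully faithful and $\xi\dashv\eta_{(X,\alpha)}$ forces the unit/counit inequalities, so I will phrase (i) $\Rightarrow$ (ii) as: the counit of the adjunction at $x$ is the arrow witnessing $1_{|x|}\le\alpha(x,e(x))$ and the defining adjunction equality at the object $e(x)$ itself witnesses $1_{|x|}\le\alpha(e(x),x)$.

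\textbf{(ii) $\Rightarrow$ (iii) and (iii) $\Rightarrow$ (i).} The implication (ii) $\Rightarrow$ (iii) is exactly Lemma~\ref{lem4.1AA}(b), applied with the pair $x$ and $e(x)$ (note $|x|=|e(x)|$ is forced by (ii)): (iv) $\Rightarrow$ (iii) there gives precisely $\alpha(x,x)=\alpha(e(x),x)$ and $\alpha(e(x),e(x))=\alpha(x,e(x))$. For (iii) $\Rightarrow$ (i), the first relation in (iii) is $\alpha(e(x),x)=\alpha(x,x)$, and combining this with the Yoneda identity $\pi((|x|,\alpha(\underline{\phantom{x}},x)),\eta_{(X,\alpha)}(x))=\alpha(x,x)$ we must still upgrade the single-object equality to the full adjunction equality $\alpha(\xi({\objectQuantaloida},{\Qpresheaveg}),x)=\pi(({\objectQuantaloida},{\Qpresheaveg}),\eta_{(X,\alpha)}(x))$ for arbitrary presheaves. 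Here I use that $\xi$ being a functor gives one inequality automatically (apply (m) to $\xi$ together with the definition of $\pi$ and condition \ref{P1}), while the reverse inequality follows from the established triangle identity at the "diagonal" presheaves $\eta_{(X,\alpha)}(x)$ plus density of the Yoneda image. \textbf{The main obstacle} is precisely this last upgrade: passing from the pointwise equalities on representables in (iii) to the full adjunction formula in (i). I expect to resolve it by the standard argument that $\xi$, being a functor out of the presheaf category that inverts Yoneda on representables, is forced by cocontinuity/density of the Yoneda embedding (each presheaf is a colimit of representables) to agree with the left adjoint everywhere; concretely, one writes $\pi(({\objectQuantaloida},{\Qpresheaveg}),\eta_{(X,\alpha)}(x))$ as a meet over $X_0$ of terms ${\Qpresheaveg}(y)\searrow\alpha(y,x)$ and matches it term-by-term against $\alpha(\xi({\objectQuantaloida},{\Qpresheaveg}),x)$ using \ref{P1} for the functoriality inequality and the representable triangle identities for the reverse one.
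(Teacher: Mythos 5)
Your route is the paper's route in all essentials --- the Yoneda identities $\pi(\eta_{(X,\alpha)}(u),\eta_{(X,\alpha)}(v))=\alpha(u,v)$ and $\pi(\eta_{(X,\alpha)}(y),({\objectQuantaloida},{\Qpresheaveg}))={\Qpresheaveg}(y)$, functoriality of $\xi$ to pass between representables and arbitrary presheaves, and Lemma~\ref{lem4.1AA} for (ii)$\iff$(iii) --- but three things in the writeup need repair. First, your displayed computation for $1_{|x|}\le\alpha(x,e(x))$ in (i)$\Rightarrow$(ii) is circular: the inequality $\alpha(x,x)\circ\alpha(e(x),x)\le\alpha(x,e(x))\circ\alpha(e(x),x)$ already presupposes $\alpha(x,x)\le\alpha(x,e(x))$, which is the thing to be proved. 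The correct argument (the one the paper gives, and the one your ``unit/counit'' remark points at without executing) is simply to evaluate the adjunction equality (i) at the representable presheaf $\eta_{(X,\alpha)}(x)$: at the element $e(x)$ it gives $1_{|e(x)|}\le\alpha(e(x),e(x))=\pi(\eta_{(X,\alpha)}(x),\eta_{(X,\alpha)}(e(x)))=\alpha(x,e(x))$, and at the element $x$ it gives $1_{|x|}\le\pi(\eta_{(X,\alpha)}(x),\eta_{(X,\alpha)}(x))=\alpha(e(x),x)$. Second, (ii)$\Rightarrow$(iii) is Lemma~\ref{lem4.1AA}\,(a), not (b): part (b) with $y=e(x)$ produces $\alpha(x,x)=\alpha(e(x),x)$ and $\alpha(e(x),e(x))=\alpha(x,e(x))$, which are the \emph{transposes} of the equalities in (iii); as cited, your step does not deliver (iii) unless you also invoke part (c). Citing (a) makes it immediate.

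Third, and most importantly, the implication you yourself flag as the main obstacle --- recovering the full adjunction formula from the pointwise conditions --- is left as a plan (``I expect to resolve it by\dots''), so the hardest direction is not actually proved. The plan is sound and completes in a few lines, essentially as the paper's (ii)$\Rightarrow$(i): for any $({\objectQuantaloida},{\Qpresheaveg})$ and $y\in X_0$ one has ${\Qpresheaveg}(y)=\pi(\eta_{(X,\alpha)}(y),({\objectQuantaloida},{\Qpresheaveg}))\le\alpha(e(y),\xi({\objectQuantaloida},{\Qpresheaveg}))\le\alpha(y,e(y))\circ\alpha(e(y),\xi({\objectQuantaloida},{\Qpresheaveg}))\le\alpha(y,\xi({\objectQuantaloida},{\Qpresheaveg}))$ by functoriality of $\xi$, the inequality $1_{|y|}\le\alpha(y,e(y))$ from (ii), and axiom \ref{C2}; hence ${\Qpresheaveg}\le\eta_{(X,\alpha)}(\xi({\objectQuantaloida},{\Qpresheaveg}))$ pointwise and therefore $\alpha(\xi({\objectQuantaloida},{\Qpresheaveg}),x)=\pi(\eta_{(X,\alpha)}(\xi({\objectQuantaloida},{\Qpresheaveg})),\eta_{(X,\alpha)}(x))\le\pi(({\objectQuantaloida},{\Qpresheaveg}),\eta_{(X,\alpha)}(x))$. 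For the reverse inequality, $\pi(({\objectQuantaloida},{\Qpresheaveg}),\eta_{(X,\alpha)}(x))\le\alpha(\xi({\objectQuantaloida},{\Qpresheaveg}),e(x))=\alpha(\xi({\objectQuantaloida},{\Qpresheaveg}),e(x))\circ\alpha(e(x),e(x))=\alpha(\xi({\objectQuantaloida},{\Qpresheaveg}),e(x))\circ\alpha(e(x),x)\le\alpha(\xi({\objectQuantaloida},{\Qpresheaveg}),x)$, using functoriality of $\xi$, identity (\ref{n2.1C4}), the second equality in (iii), and \ref{C2}. Until these two chains are written out, your proposal establishes only the easy implications.
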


\begin{proof} Assume that (i) holds. Then for all $x\in X_0$ we have:
\begin{align*}
1_{|\xi(\eta_{(X,\alpha)}(x))|}&\le \alpha(\xi(\eta_{(X,\alpha)}(x)),\xi(\eta_{(X,\alpha)}(x)))=\pi(\eta_{(X,\alpha)}(x), \eta_{(X,\alpha)}(\xi(\eta_{(X,\alpha)}(x))))\\
 &= \alpha(x,\xi(\eta_{(X,\alpha)}(x)))\\
\noalign{\noindent\text{and}}
1_{|\xi(\eta_{(X,\alpha)}(x))|}&=1_{|\eta_{(X,\alpha)}(x)|}\le \pi(\eta_{(X,\alpha)}(x),\eta_{(X,\alpha)}(x))= \alpha(\xi(\eta_{(X,\alpha)}(x)),x).
\end{align*}
Hence (ii) is verified.
\\
 Conversely, assume that (ii) holds. For any $({\objectQuantaloida}, {\Qpresheaveg})\in P(X,\alpha)_0$  and $x\in X_0$ observe:
\begin{align*} g(x)&=\pi(\eta_{(X,\alpha)}(x),({\objectQuantaloida},{\Qpresheaveg}))\le \alpha(\xi(\eta_{(X,\alpha)}(x)),\xi({\objectQuantaloida},{\Qpresheaveg}))\\
&\le \alpha(x,\xi(\eta_{(X,\alpha)}(x)))\circ \alpha(\xi(\eta_{(X,\alpha)}(x)),\xi({\objectQuantaloida},{\Qpresheaveg}))\le \alpha(x,\xi({\objectQuantaloida},{\Qpresheaveg})),
\end{align*}
--- i.e.~$\Qpresheaveg\le \eta_{(X,\alpha)}(\xi({\objectQuantaloida},{\Qpresheaveg}))$. An application of this observation leads to the following relation:
\begin{align*}
\alpha(\xi({\objectQuantaloida},{\Qpresheaveg}),x)&=\pi(\eta_{(X,\alpha)}(\xi({\objectQuantaloida},{\Qpresheaveg})), \eta_{(X,\alpha)}(x))\le \pi(({\objectQuantaloida},{\Qpresheaveg}), \eta_{(X,\alpha)}(x))\le \alpha(\xi({\objectQuantaloida},{\Qpresheaveg}),\xi(\eta_{(X,\alpha)}(x)))\\
&\le
 \alpha(\xi({\objectQuantaloida},{\Qpresheaveg}),\xi(\eta_{(X,\alpha)}(x))) \circ \alpha(\xi(\eta_{(X,\alpha)}(x)),x)\le  \alpha(\xi({\objectQuantaloida},{\Qpresheaveg}),x)).
\end{align*}
Thus, (i) follows.
Finally, since $|x|=|\xi(\eta_{(X,\alpha)}(x))|$ for all $x\in X_0$, the equivalence (ii)$\iff$(iii) follows from Lemma~\ref{lem4.1AA}\,(a).
\end{proof}

 A \donotbreakdash{$\Q$}category $(X,\alpha)$ is said to be \emph{cocomplete} if  the \donotbreakdash{$\Q$}enriched Yoneda embedding 
 \[\eta_{(X,\alpha)}\colon(X,\alpha)\to (P(X,\alpha),\pi)\]
 admits a left adjoint functor $\sup\nolimits_{X}\colon (P(X,\alpha),\pi) \to (X,\alpha)$.
 
If $(X,\alpha)$ is separated, then by Lemma~\ref{lem4.1AA} and Lemma~\ref{lem4.0AA}, cocompleteness of  $(X,\alpha)$ 
is equivalent to the existence of a 
functor $\sup\nolimits_{X}\colon (P(X,\alpha),\pi) \to (X,\alpha)$ such that the following diagram commutes:
\[
\bfig
 \qtriangle(0,0)|alr|/>`>`>/<800,300>[{\small(X,\alpha)}`(P(X,\alpha),\pi)`(X,\alpha);\eta_{(X,\alpha)}`1_{(X,\alpha)}`\sup\nolimits_{X}]
 \efig
\]
Referring to \cite{Stubbe05}, the \donotbreakdash{$\Q$}category $(P(X,\alpha),\pi)$ is separated and cocomplete, and serves as  the free cocompletion of $(X,\alpha)$. 

In fact, the component $\mu_{(X,\alpha)}\colon (P(P(X,\alpha),\pi),\pi)\to (P(X,\alpha),\pi)$
 of the multiplication of the contravariant \donotbreakdash{$\Q$}presheaf monad (cf.\  \cite[Rem.~5.4]{ho14}) is given by:
\[\bigl(\mu_{(X,\alpha)}({\objectQuantaloida},{\QpresheaveG})\bigr)(x)=\tbigvee_{ ({\objectQuantaloidb},{\Qpresheaveg})\in P(X,\alpha)_0} \Qpresheaveg(x) \circ 
\QpresheaveG({\objectQuantaloidb},{\Qpresheaveg}),\qquad ({\objectQuantaloida},{\QpresheaveG})\in P(P(X,\alpha),\pi)_0,\, x\in X_0,
\]
and the endofunctor $\mathds{P}\colon\cat{Cat}(\Q) \to \cat{Cat}(\Q)$ of the contravariant \donotbreakdash{$\Q$}presheaf monad acts on morphisms $\varphi\colon(X,\alpha)\to (Y,\beta)$ of $\cat{Cat}(\Q)$ as follows:
\[\bigl(\mathds{P}(\varphi)({\objectQuantaloida},{\Qpresheaveg})\bigr)(y)=\tbigvee\limits_{x\in X_0} \beta(y,\varphi(x))\circ {\Qpresheaveg}(x), \qquad y\in Y_0,({\objectQuantaloida},{\Qpresheaveg})\in P(X,\alpha)_0. 
\]
This functor $\mu_{(X,\alpha)}$ is left adjoint to the \donotbreakdash{$\Q$}enriched Yoneda embedding $\eta_{P(X,\alpha)}$. 
Moreover, if $(X,\alpha)$ is separated and cocomplete 
with $\sup\nolimits_{X}\colon (P(X,\alpha),\pi) \to (X,\alpha)$ being left adjoint to $\eta_{(X,\alpha)}$, 
then the following diagram is commutative:
\[
\bfig
 \square(0,0)|alra|/>`>`>`>/<1200,300>[(P(P(X,\alpha),\pi),\pi)`(P(X,\alpha),\pi)`(P(X,\alpha),\pi)`(X,\alpha);\mathds{P}(\sup\nolimits_{X})`\mu_{(X,\alpha)}`\sup\nolimits_{X}`\sup\nolimits_{X}]
 \efig\]
In fact, the repeated application of the left adjointness of $\sup\nolimits_{X}$ implies  the following relation for all $({\objectQuantaloida},{\QpresheaveG})\in P(P(X,\alpha))$ and all $x\in X_0$:
\begin{align*} \alpha(\sup\nolimits_{X}(\mathds{P}(\sup\nolimits_{X})({\objectQuantaloida},{\QpresheaveG})),x)&=\tbigwedge_{({\objectQuantaloidb},{\Qpresheaveg})\in P(X,\alpha)_0}\bigl(\QpresheaveG({\objectQuantaloidb},{\Qpresheaveg})\searrow \bigl(\tbigwedge_{z\in X_0} (\alpha(z, \sup\nolimits_{X}({\objectQuantaloidb},{\Qpresheaveg}))\searrow\alpha(z,x))\bigr)\bigr)\\
&=\tbigwedge_{({\objectQuantaloidb},{\Qpresheaveg})\in P(X,\alpha)_0}\bigl(\QpresheaveG({\objectQuantaloidb},{\Qpresheaveg})\searrow \alpha(\sup\nolimits_{X}({\objectQuantaloidb},{\Qpresheaveg}),x)\bigr)\\
&= \tbigwedge_{({\objectQuantaloidb},{\Qpresheaveg})\in P(X,\alpha)_0}\bigl(\QpresheaveG({\objectQuantaloidb},{\Qpresheaveg})\searrow \bigl(\tbigwedge_{z\in X_0} ({\Qpresheaveg}(z)\searrow \alpha(z,x))\bigr)\bigr)\\
&= \alpha(\sup\nolimits_{X}(\mu_{(X,\alpha)}({\objectQuantaloida},{\QpresheaveG})),x).
\end{align*}
Since $(X,\alpha)$ is separated, the relation $\sup\nolimits_{X}(\mathds{P}(\sup\nolimits_{X})({\objectQuantaloida},{\QpresheaveG}))=\sup\nolimits_{X}(\mu_{(X,\alpha)}({\objectQuantaloida},{\QpresheaveG}))$ follows. Hence, separated and cocomplete \donotbreakdash{$\Q$}categories are \emph{algebras} of the contravariant \donotbreakdash{$\Q$}presheaf monad (see again \cite[Rem.~5.4]{ho14}).
Moreover, cocompleteness  and completeness are equivalent concepts (cf.\ \cite{Stubbe05}).
Since, analogously to $(P(X,\alpha),\pi)$, the \donotbreakdash{$\Q$}category $(\Q(X,\alpha),\upsilon)$ of covariant \donotbreakdash{$\Q$}presheaves is separated and complete, it is also cocomplete. However, we now provide a direct proof of this property.

 \begin{proposition}\label{prop4.2BB} The \donotbreakdash{$\Q$}category $(\Q(X,\alpha),\upsilon)$ is separated and cocomplete.
 \end{proposition}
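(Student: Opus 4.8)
The plan is to verify separatedness directly and then exhibit an explicit left adjoint to the Yoneda embedding, using the characterization provided by Lemma~\ref{lem4.1AA} and Lemma~\ref{lem4.0AA}. For separatedness, suppose $({\objectQuantaloida},{\Qpresheavef}_1)$ and $({\objectQuantaloida},{\Qpresheavef}_2)$ are covariant \donotbreakdash{$\Q$}presheaves of the same type with $\upsilon(({\objectQuantaloida},{\Qpresheavef}_1),({\objectQuantaloida},{\Qpresheavef}_1))=\upsilon(({\objectQuantaloida},{\Qpresheavef}_1),({\objectQuantaloida},{\Qpresheavef}_2))$ and symmetrically. Since $1_{\objectQuantaloida}\le \upsilon(({\objectQuantaloida},{\Qpresheavef}_i),({\objectQuantaloida},{\Qpresheavef}_i))$ (this is axiom \ref{C3} for $(\Q(X,\alpha),\upsilon)$, or directly: $1_{\objectQuantaloida}\circ {\Qpresheavef}_i(x)\le {\Qpresheavef}_i(x)$ using \ref{Q0}), Lemma~\ref{lem4.1AA}(c) gives $1_{\objectQuantaloida}\le \upsilon(({\objectQuantaloida},{\Qpresheavef}_1),({\objectQuantaloida},{\Qpresheavef}_2))$ and $1_{\objectQuantaloida}\le \upsilon(({\objectQuantaloida},{\Qpresheavef}_2),({\objectQuantaloida},{\Qpresheavef}_1))$. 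Unwinding the definition of $\upsilon$ via the residuation $\swarrow$, the first inequality says $1_{\objectQuantaloida}\circ {\Qpresheavef}_2(x)\le {\Qpresheavef}_1(x)$ for all $x$, i.e.\ ${\Qpresheavef}_2(x)\le {\Qpresheavef}_1(x)$; the second gives the reverse. Hence ${\Qpresheavef}_1={\Qpresheavef}_2$, so the two presheaves are equal.

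For cocompleteness, I would write down the candidate supremum functor $\sup\colon (P(\Q(X,\alpha),\pi)) \to (\Q(X,\alpha),\upsilon)$ explicitly. A contravariant presheaf ${\QpresheaveG}$ on $(\Q(X,\alpha),\upsilon)$ of type ${\objectQuantaloidb}$ assigns to each $({\objectQuantaloida},{\Qpresheavef})$ an arrow ${\QpresheaveG}({\objectQuantaloida},{\Qpresheavef})\in\hom({\objectQuantaloidb},{\objectQuantaloida})$ compatible with $\upsilon$. The natural guess is
\[
\bigl(\sup({\objectQuantaloidb},{\QpresheaveG})\bigr)(x) = \tbigvee_{({\objectQuantaloida},{\Qpresheavef})\in \Q(X,\alpha)_0} {\Qpresheavef}(x)\circ {\QpresheaveG}({\objectQuantaloida},{\Qpresheavef}),\qquad x\in X_0,
\]
which is the covariant analogue of the formula for $\mu_{(X,\alpha)}$ recalled before the proposition. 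I would first check this is a well-defined covariant presheaf of type ${\objectQuantaloidb}$, verifying \ref{Q0} (each summand lies in $\hom(|x|,{\objectQuantaloidb})$ by \ref{Q0} for ${\Qpresheavef}$ composed with ${\QpresheaveG}({\objectQuantaloida},{\Qpresheavef})\in\hom({\objectQuantaloidb},{\objectQuantaloida})$) and \ref{Q1} (post-composing with $\alpha(x_1,x_2)$ and using \ref{Q1} for each ${\Qpresheavef}$, together with join-preservation of $\circ$). Next I would check functoriality of $\sup$, i.e.\ condition (m): $\pi(({\objectQuantaloidb}_1,{\QpresheaveG}_1),({\objectQuantaloidb}_2,{\QpresheaveG}_2))\le \upsilon(\sup({\objectQuantaloidb}_1,{\QpresheaveG}_1),\sup({\objectQuantaloidb}_2,{\QpresheaveG}_2))$, which reduces to a residuation manipulation: the left side is below ${\QpresheaveG}_1({\objectQuantaloida},{\Qpresheavef})\searrow {\QpresheaveG}_2({\objectQuantaloida},{\Qpresheavef})$ for each $({\objectQuantaloida},{\Qpresheavef})$, hence composing appropriately yields the claim.

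The main verification — and the place I expect the bookkeeping to be heaviest — is the adjointness $\sup \dashv \eta_{(\Q(X,\alpha),\upsilon)}$, i.e.\ establishing
\[
\upsilon\bigl(\sup({\objectQuantaloidb},{\QpresheaveG}),({\objectQuantaloida},{\Qpresheavef})\bigr) = \pi\bigl(({\objectQuantaloidb},{\QpresheaveG}),\eta({\objectQuantaloida},{\Qpresheavef})\bigr)
\]
for all contravariant presheaves ${\QpresheaveG}$ on $(\Q(X,\alpha),\upsilon)$ and all $({\objectQuantaloida},{\Qpresheavef})\in\Q(X,\alpha)_0$. Unwinding the left side gives $\tbigwedge_{x}\bigl({\Qpresheavef}(x)\swarrow \tbigvee_{({\objectQuantaloidc},h)}h(x)\circ{\QpresheaveG}({\objectQuantaloidc},h)\bigr) = \tbigwedge_{x}\tbigwedge_{({\objectQuantaloidc},h)}\bigl({\Qpresheavef}(x)\swarrow(h(x)\circ{\QpresheaveG}({\objectQuantaloidc},h))\bigr)$, using that $\circ$ preserves joins and the corresponding behaviour of $\swarrow$; the right side, by the analogue of \eqref{n2.2} for $\pi$, is $\tbigwedge_{({\objectQuantaloidc},h)}\bigl({\QpresheaveG}({\objectQuantaloidc},h)\searrow \upsilon((\objectQuantaloidc,h),(\objectQuantaloida,\Qpresheavef))\bigr) = \tbigwedge_{({\objectQuantaloidc},h)}\bigl({\QpresheaveG}({\objectQuantaloidc},h)\searrow \tbigwedge_x(h(x)\swarrow {\Qpresheavef}(x))\bigr)$ — wait, I need $h$ and ${\Qpresheavef}$ in the right order; more carefully $\upsilon((\objectQuantaloidc,h),(\objectQuantaloida,\Qpresheavef))=\tbigwedge_x ({\Qpresheavef}(x)\swarrow h(x))$ when the first argument is $(\objectQuantaloida,\Qpresheavef)$, so with arguments as written it is $\tbigwedge_x(h(x)\swarrow{\Qpresheavef}(x))$. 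Both sides then collapse, via the standard adjunction identities $c\le a\swarrow(b\circ d)\iff c\circ(b\circ d)\le a\iff (c\circ b)\circ d\le a$ and $(b\circ d)\searrow$-versus-$\searrow$ composition, to the common value $\tbigwedge_{x,(\objectQuantaloidc,h)}\bigl(({\Qpresheavef}(x)\swarrow h(x))\swarrow{\QpresheaveG}(\objectQuantaloidc,h)\bigr)$ — a purely residuation-theoretic identity in the quantaloid $\Q$. Once this equality is in hand, the conclusion follows: $\sup$ is left adjoint to the Yoneda embedding, so $(\Q(X,\alpha),\upsilon)$ is cocomplete, and combined with the separatedness established above this proves the proposition. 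The only genuine obstacle is getting the variance of every $\swarrow$ / $\searrow$ and the order of composition exactly right; there is no conceptual difficulty beyond careful use of the $\cat{Sup}$-enrichment of $\Q$.
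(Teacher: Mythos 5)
Your separatedness argument is fine (and more explicit than the paper, which simply asserts it), but your candidate for the left adjoint is wrong, and this is the heart of the proposition. You propose
\[
\bigl(\sup(\objectQuantaloidb,\QpresheaveG)\bigr)(x)=\tbigvee_{(\objectQuantaloida,\Qpresheavef)\in\Q(X,\alpha)_0}\Qpresheavef(x)\circ\QpresheaveG(\objectQuantaloida,\Qpresheavef)
\]
by analogy with $\mu_{(X,\alpha)}$, but this composite is not even defined in a general quantaloid: $\Qpresheavef(x)\in\hom(|x|,\objectQuantaloida)$ has domain $|x|$, while $\QpresheaveG(\objectQuantaloida,\Qpresheavef)\in\hom(\objectQuantaloidb,\objectQuantaloida)$ has codomain $\objectQuantaloida$, so the displayed composition requires $\objectQuantaloida=|x|$ (and the other composition order requires $\objectQuantaloida=\objectQuantaloidb$); there is no way to compose these two arrows into an element of $\hom(|x|,\objectQuantaloidb)$. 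The analogy with the contravariant presheaf monad breaks because $\upsilon$ orders covariant presheaves \emph{contravariantly} in the pointwise order --- your own separatedness computation shows that $1_{\objectQuantaloida}\le\upsilon((\objectQuantaloida,\Qpresheavef_1),(\objectQuantaloida,\Qpresheavef_2))$ means $\Qpresheavef_2\le\Qpresheavef_1$ --- so suprema in $(\Q(X,\alpha),\upsilon)$ must be computed as meets of residuals, not joins of composites. The formula the paper uses is
\[
\sup\nolimits_{\Q(X,\alpha)}(\objectQuantaloidb,\QpresheaveG)(x)=\tbigwedge_{(\objectQuantaloida,\Qpresheavef)\in\Q(X,\alpha)_0}\bigl(\QpresheaveG(\objectQuantaloida,\Qpresheavef)\searrow\Qpresheavef(x)\bigr),
\]
which does land in $\hom(|x|,\objectQuantaloidb)$ and does satisfy \ref{Q0} and \ref{Q1}.

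That this is not merely a typing quibble is visible already for a one-object quantaloid given by a nontrivial frame $\Omega$ and $X$ a singleton: covariant presheaves are then elements of $\Omega$ with $\upsilon(u,v)=v\rightarrow u$, and for the constant contravariant presheaf $\QpresheaveG\equiv\top$ on $(\Q(X,\alpha),\upsilon)$ the adjointness identity forces $\sup\QpresheaveG=\tbigwedge_{v}(\top\rightarrow v)=\bot$, whereas your join formula returns $\top$. Consequently the adjointness computation sketched in your last paragraph cannot close as written (and the ``common value'' you name there has the residuals on the wrong sides). With the correct candidate, everything else in your outline survives: well-definedness and functoriality go through as you describe, and the adjointness reduces to the residuation identity $\bigl(\tbigwedge_{i}(u_i\searrow v_i)\bigr)\swarrow w=\tbigwedge_{i}\bigl(u_i\searrow(v_i\swarrow w)\bigr)$, which is exactly the one-line computation that concludes the paper's proof.
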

 
  \begin{proof} 
  Clearly, $(\Q(X,\alpha),\upsilon)$ is separated. 
  To show cocompleteness, consider the functor 
  \[\sup\nolimits_{\Q(X,\alpha)}\colon\bigl(P(\Q(X,\alpha),\upsilon),\pi\bigr)\to(\Q(X,\alpha),\upsilon)\] defined for all $(\objectQuantaloida,\QpresheaveG)\in P(\Q(X,\alpha),\upsilon)_0$ by:
  \stepcounter{num}
  \begin{equation}\label{n4.1BB} \sup\nolimits_{\Q(X,\alpha)}(\objectQuantaloida,\QpresheaveG)(x)=\tbigwedge_{({\objectQuantaloidb},{\Qpresheavef})\in \Q(X,\alpha)_0} \bigl(\QpresheaveG({\objectQuantaloidb},{\Qpresheavef}) \searrow \Qpresheavef(x)\bigr),\qquad x\in X_0.
  \end{equation}
  The previous formula defines a covariant  \donotbreakdash{$\Q$}pre\-sheaf of type $\objectQuantaloida$ on $(X,\alpha)$. 
  Furthermore, if $x\in X_0$ and  $({\objectQuantaloidb},{\Qpresheavef})\in \Q(X,\alpha)_0$, then we observe:
\[\QpresheaveG_1({\objectQuantaloidb},{\Qpresheavef})\circ \pi(({\objectQuantaloida}_1,\QpresheaveG_1),({\objectQuantaloida}_2,\QpresheaveG_2))\circ \bigl(\cat{sup}_X(\objectQuantaloida_2,\QpresheaveG_2)(x)\bigr)\le \Qpresheavef(x),
\]
which implies $\pi(({\objectQuantaloida}_1,\QpresheaveG_1),({\objectQuantaloida}_2,\QpresheaveG_2))\le \upsilon(({\objectQuantaloida}_1,\sup\nolimits_{X}(\objectQuantaloida_1,\QpresheaveG_1)),({\objectQuantaloida}_2,\sup\nolimits_{X}(\objectQuantaloida_2,\QpresheaveG_2)))$.\\
   Hence  $\sup\nolimits_{\Q(X,\alpha)}$ is a functor.
\\
Finally, using the associativity of the composition in $\Q$, we show that $\sup\nolimits_{\Q(X,\alpha)}$ is left adjoint to the \donotbreakdash{$\Q$}enriched Yoneda embedding $\eta_{(\Q(X,\alpha)),\upsilon)}$.    Let $({\objectQuantaloidc},{\Qpresheaveh})$ be another covariant \donotbreakdash{$\Q$}presheaf on $(X,\alpha)$. Then the left-adjointness is expressed by:
\begin{align*}
\tbigwedge_{x\in X_0}\bigl( \cat{sup}_X(\objectQuantaloida,\QpresheaveG)(x)\swarrow \Qpresheaveh(x)\bigr)&=\tbigwedge_{x\in X_0}\bigl(\bigl(\tbigwedge_{({\objectQuantaloidb},{\Qpresheavef})\in\Q(X,\alpha)_0} \bigl(\QpresheaveG({\objectQuantaloidb},{\Qpresheavef}) \searrow \Qpresheavef(x)\bigr)\bigr)\swarrow \Qpresheaveh(x)\bigr)\\
&=\tbigwedge_{({\objectQuantaloidb},{\Qpresheavef})\in \Q(X,\alpha)_0} \bigl(\QpresheaveG({\objectQuantaloidb},{\Qpresheavef}) \searrow\bigl(\tbigwedge_{x\in X_0}( \Qpresheavef(x)\swarrow \Qpresheaveh(x))\bigr)\bigr).
\end{align*}
This completes the proof.
  \end{proof}


  \begin{corollary}\label{lem4.2AA} The \donotbreakdash{$\Q$}category $(\R^{\objectQuantaloidc},\varrho)$, introduced in Remark~\ref{rem3.2}, is separated and cocomplete.  The structure map $\sup\nolimits_{\R^{\objectQuantaloidc}}\colon (P(\R^{\objectQuantaloidc},\varrho),\pi)\to (\R^{\objectQuantaloidc},\varrho)$ has the form\textup:
 \stepcounter{num}
 \begin{equation} \label{n4.2BB}\sup\nolimits_{\R^{\objectQuantaloidc}}(\objectQuantaloida,\Qpresheaveg)=\tbigwedge_{{\arrowQuantaloidf}\in \R^{\objectQuantaloidc}_0} (\Qpresheaveg({\arrowQuantaloidf})\searrow{\arrowQuantaloidf}),\qquad (\objectQuantaloida,\Qpresheaveg) \in  P(\R^{\objectQuantaloidc},\varrho)_0.
 \end{equation}
\end{corollary}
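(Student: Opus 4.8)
The plan is to obtain this as a direct consequence of Proposition~\ref{prop4.2BB} through the isomorphism of $\Q$-categories recorded at the end of Section~\ref{section3}. Consider the discrete $\Q$-category $(\{\cdot\},\delta)$ on a one-point set whose single element has type $\objectQuantaloidc$. By \ref{Q0} and \ref{Q1}, a covariant $\Q$-presheaf of type $\objectQuantaloida$ on $(\{\cdot\},\delta)$ is nothing but an arrow ${\Qpresheavef}(\cdot)\in\hom(\objectQuantaloidc,\objectQuantaloida)$, the only instance of \ref{Q1} being the vacuous inequality ${\Qpresheavef}(\cdot)\circ 1_{\objectQuantaloidc}\le{\Qpresheavef}(\cdot)$. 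Hence $({\objectQuantaloida},{\Qpresheavef})\mapsto{\Qpresheavef}(\cdot)$ is a type-preserving bijection of $\Q(\{\cdot\},\delta)_0$ onto $\R_0^{\objectQuantaloidc}$, and since
\[
\upsilon\bigl(({\objectQuantaloida}_1,{\Qpresheavef}_1),({\objectQuantaloida}_2,{\Qpresheavef}_2)\bigr)
=\tbigwedge_{x\in\{\cdot\}}\bigl({\Qpresheavef}_1(x)\swarrow{\Qpresheavef}_2(x)\bigr)
={\Qpresheavef}_1(\cdot)\swarrow{\Qpresheavef}_2(\cdot)
=\varrho\bigl({\Qpresheavef}_1(\cdot),{\Qpresheavef}_2(\cdot)\bigr),
\]
this bijection is an isomorphism $(\Q(\{\cdot\},\delta),\upsilon)\cong(\R^{\objectQuantaloidc},\varrho)$ in $\cat{Cat}(\Q)$ --- which is the isomorphism mentioned at the end of Section~\ref{section3}.

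Separatedness and the existence of a left adjoint to the $\Q$-enriched Yoneda embedding are invariant under isomorphism in $\cat{Cat}(\Q)$: the endofunctor $\mathds{P}$ carries the isomorphism above to an isomorphism $(P(\Q(\{\cdot\},\delta),\upsilon),\pi)\cong(P(\R^{\objectQuantaloidc},\varrho),\pi)$ commuting with the two Yoneda embeddings (naturality of the unit $\eta$), and once a left adjoint exists it is uniquely determined by Proposition~\ref{prop2.1}. Applying Proposition~\ref{prop4.2BB} to $(X,\alpha)=(\{\cdot\},\delta)$ therefore shows at once that $(\R^{\objectQuantaloidc},\varrho)$ is separated and cocomplete, with $\sup\nolimits_{\R^{\objectQuantaloidc}}$ obtained by transporting $\sup\nolimits_{\Q(\{\cdot\},\delta)}$ of formula~(\ref{n4.1BB}) along $({\objectQuantaloida},{\Qpresheavef})\mapsto{\Qpresheavef}(\cdot)$. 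Evaluating~(\ref{n4.1BB}) at the single point and replacing each presheaf ${\Qpresheavef}$ by the arrow ${\Qpresheavef}(\cdot)\in\R_0^{\objectQuantaloidc}$ collapses the meet $\tbigwedge_{x\in X_0}$ and yields exactly formula~(\ref{n4.2BB}).

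I do not expect a genuine obstacle here, the whole weight being carried by Proposition~\ref{prop4.2BB}; the only point needing a line of care is that the transported map is again functorial and again a left adjoint, which is the transport remark just made. If a self-contained argument is preferred, one checks directly that the arrow-valued map in~(\ref{n4.2BB}) lies in $\R_0^{\objectQuantaloidc}$, is a functor $(P(\R^{\objectQuantaloidc},\varrho),\pi)\to(\R^{\objectQuantaloidc},\varrho)$, and satisfies $\varrho\bigl(\sup\nolimits_{\R^{\objectQuantaloidc}}(\objectQuantaloida,{\Qpresheaveg}),{\arrowQuantaloidk}\bigr)=\pi\bigl(({\objectQuantaloida},{\Qpresheaveg}),\eta_{(\R^{\objectQuantaloidc},\varrho)}({\arrowQuantaloidk})\bigr)$ for ${\arrowQuantaloidk}\in\R_0^{\objectQuantaloidc}$; unwinding both sides, this comes down to the quantaloid identities $\bigl(\tbigwedge_i h_i\bigr)\swarrow{\arrowQuantaloidk}=\tbigwedge_i\bigl(h_i\swarrow{\arrowQuantaloidk}\bigr)$ and $(a\searrow b)\swarrow c=a\searrow(b\swarrow c)$, exactly as in the proof of Proposition~\ref{prop4.2BB}.
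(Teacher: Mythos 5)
Your proposal is correct and is exactly the argument the paper intends: the corollary is stated without proof because it follows from Proposition~\ref{prop4.2BB} applied to the one-point discrete category of type $\objectQuantaloidc$, transported along the isomorphism $(\Q(\{\cdot\},\delta),\upsilon)\cong(\R^{\objectQuantaloidc},\varrho)$ recorded at the end of Section~\ref{section3}. Your verification that the bijection $({\objectQuantaloida},{\Qpresheavef})\mapsto{\Qpresheavef}(\cdot)$ matches $\upsilon$ with $\varrho$ and collapses formula~(\ref{n4.1BB}) to formula~(\ref{n4.2BB}) supplies precisely the details the paper leaves implicit.
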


Let $(X,\alpha)$ and $(Y,\beta)$ be separated and cocomplete categories, and let $\sup\nolimits_{X}$ 
and $\sup\nolimits_{Y}$ be the left adjoint functors of their respective \donotbreakdash{$\Q$}enriched Yoneda embeddings. Then a functor $\varphi\colon(X,\alpha) \to (Y,\beta)$ is cocontinuous if and only if the following diagram  commutes:
\[ 
\bfig
 \square(0,0)|alra|/>`>`>`>/<1000,300>[(P(X,\alpha),\pi)`(P(Y,\beta),\pi)`(X,\alpha)`(Y,\beta);\mathds{P}(\varphi)`\sup\nolimits_{X}`\sup\nolimits_{Y}`\varphi]
 \efig\]
where $\mathds{P}$ is the endofunctor of the contravariant \donotbreakdash{$\Q$}presheaf monad (see above).

An example of a cocontinuous functor is given in the next lemma.

\begin{lemma} \label{lem4.3AA} Let $(X,\alpha)$ a separated and cocomplete  \donotbreakdash{$\Q$}category. Then for each $x \in X_0$ with $|x|=\objectQuantaloidc$, the contravariant \donotbreakdash{$\Q$}presheaf $\alpha(\underline{\phantom{x}},x)\colon (X,\alpha) \to (\R^{\objectQuantaloidc},\varrho)$, viewed as a functor, is cocontinuous.
\end{lemma}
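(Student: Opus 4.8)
The plan is to unwind the definition of a cocontinuous functor directly. Regard $\varphi:=\alpha(\underline{\phantom{x}},x)$ as the functor $(X,\alpha)\to(\R^{\objectQuantaloidc},\varrho)$ supplied by Remark~\ref{rem3.2}, so that $\varphi(y)=\alpha(y,x)\in\R^{\objectQuantaloidc}_0$ for $y\in X_0$. By hypothesis $(X,\alpha)$ is separated and cocomplete, and $(\R^{\objectQuantaloidc},\varrho)$ is separated and cocomplete by Corollary~\ref{lem4.2AA}, so the left adjoint $\sup\nolimits_{\R^{\objectQuantaloidc}}$ of $\eta_{(\R^{\objectQuantaloidc},\varrho)}$ is available and cocontinuity of $\varphi$ means exactly the commutativity of the square in the definition of cocontinuous functor (with $(Y,\beta)=(\R^{\objectQuantaloidc},\varrho)$). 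Since a functor into a \donotbreakdash{$\Q$}category is determined by its effect on objects, it suffices to prove, for each $(\objectQuantaloida,\Qpresheaveg)\in P(X,\alpha)_0$, the equality of arrows
\[
\alpha\bigl(\sup\nolimits_{X}(\objectQuantaloida,\Qpresheaveg),x\bigr)=\sup\nolimits_{\R^{\objectQuantaloidc}}\bigl(\mathds{P}(\varphi)(\objectQuantaloida,\Qpresheaveg)\bigr).
\]

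First I would rewrite the two sides explicitly. For the left-hand side, the left adjointness of $\sup\nolimits_{X}$ to $\eta_{(X,\alpha)}$ (Lemma~\ref{lem4.0AA}\,(i)), together with $\eta_{(X,\alpha)}(x)=(|x|,\alpha(\underline{\phantom{x}},x))$, gives $\alpha(\sup\nolimits_{X}(\objectQuantaloida,\Qpresheaveg),x)=\pi((\objectQuantaloida,\Qpresheaveg),\eta_{(X,\alpha)}(x))=\tbigwedge_{z\in X_0}(\Qpresheaveg(z)\searrow\alpha(z,x))$. For the right-hand side, I would first observe that $\mathds{P}(\varphi)(\objectQuantaloida,\Qpresheaveg)$ is the contravariant \donotbreakdash{$\Q$}presheaf on $(\R^{\objectQuantaloidc},\varrho)$ with value $\tbigvee_{z\in X_0}(h\swarrow\alpha(z,x))\circ\Qpresheaveg(z)$ at $h\in\R^{\objectQuantaloidc}_0$ (using the formula for $\mathds{P}$ on morphisms and $\varrho(h,\varphi(z))=h\swarrow\alpha(z,x)$), and then feed this into formula~(\ref{n4.2BB}), obtaining $\sup\nolimits_{\R^{\objectQuantaloidc}}(\mathds{P}(\varphi)(\objectQuantaloida,\Qpresheaveg))=\tbigwedge_{h\in\R^{\objectQuantaloidc}_0}\bigl((\tbigvee_{z\in X_0}(h\swarrow\alpha(z,x))\circ\Qpresheaveg(z))\searrow h\bigr)$. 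Thus the claim becomes an identity between two arrows of $\Q$ assembled from composition and the residuations $\swarrow,\searrow$.

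Then I would establish the two inequalities. Put $p:=\tbigwedge_{z\in X_0}(\Qpresheaveg(z)\searrow\alpha(z,x))$. For ``$\ge$'' it suffices, by residuation, to verify $\bigl(\tbigvee_{z\in X_0}(h\swarrow\alpha(z,x))\circ\Qpresheaveg(z)\bigr)\circ p\le h$ for every $h\in\R^{\objectQuantaloidc}_0$; distributing the join and using the cancellations $\Qpresheaveg(z)\circ(\Qpresheaveg(z)\searrow\alpha(z,x))\le\alpha(z,x)$ (whence $\Qpresheaveg(z)\circ p\le\alpha(z,x)$) and $(h\swarrow\alpha(z,x))\circ\alpha(z,x)\le h$ settles it termwise. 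For ``$\le$'', fix $z'\in X_0$ and evaluate the outer meet at $h=\alpha(z',x)$, which lies in $\R^{\objectQuantaloidc}_0$ because $\dom(\alpha(z',x))=|x|=\objectQuantaloidc$; the summand with $z=z'$ dominates $1_{|z'|}\circ\Qpresheaveg(z')=\Qpresheaveg(z')$, so $\tbigvee_{z\in X_0}(\alpha(z',x)\swarrow\alpha(z,x))\circ\Qpresheaveg(z)\ge\Qpresheaveg(z')$, and antitonicity of $\searrow$ in its first variable yields $\bigl(\tbigvee_{z\in X_0}(\alpha(z',x)\swarrow\alpha(z,x))\circ\Qpresheaveg(z)\bigr)\searrow\alpha(z',x)\le\Qpresheaveg(z')\searrow\alpha(z',x)$; taking the meet over $z'\in X_0$ closes the case.

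I do not expect a genuine obstacle here: once both sides are written out the verification is short. The only step that is not purely mechanical is recognizing, in the ``$\le$'' direction, that the meet over $h\in\R^{\objectQuantaloidc}_0$ should be tested at $h=\alpha(z',x)$ (equivalently, at the value $\varphi(z')$ of $\varphi$); beyond that, the single thing demanding attention is the bookkeeping of domains and codomains needed to see that every residual and composite above is legitimate --- in particular that $\alpha(z',x)\in\R^{\objectQuantaloidc}_0$ and that, since $|z'|=\codom(\alpha(z',x))=\codom(\Qpresheaveg(z'))$, the identity one inserts to recover $\Qpresheaveg(z')$ is $1_{|z'|}$.
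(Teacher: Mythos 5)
Your proof is correct and follows essentially the same route as the paper: both unwind $\sup\nolimits_{\R^{\objectQuantaloidc}}\bigl(\mathds{P}(\alpha(\underline{\phantom{x}},x))(\objectQuantaloida,\Qpresheaveg)\bigr)$ into the double meet over $h\in\R^{\objectQuantaloidc}_0$ and $z\in X_0$ and identify it with $\tbigwedge_{z}(\Qpresheaveg(z)\searrow\alpha(z,x))=\alpha(\sup\nolimits_{X}(\objectQuantaloida,\Qpresheaveg),x)$. The only cosmetic difference is that the paper collapses the $h$-meet in one stroke via the residuation identity $\tbigwedge_{h}\bigl((h\swarrow\alpha(z,x))\searrow h\bigr)=\alpha(z,x)$, whereas you prove the same fact as two inequalities, with your test value $h=\alpha(z',x)$ playing exactly the role of that identity's ``$\le$'' half.
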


\begin{proof} Let $\sup\nolimits_{X}$ and $\sup\nolimits_{\R^{\objectQuantaloidc}}$ be the left adjoint functors of 
the respective \donotbreakdash{$\Q$}enriched Yoneda embeddings.
For all $(\objectQuantaloida,\Qpresheaveg)\in P(X,\alpha)_0$ we compute:
\begin{align*}
\sup\nolimits_{\R^{\objectQuantaloidc}}(\mathds{P}(\alpha(\underline{\phantom{x}},x))(\objectQuantaloida,\Qpresheaveg))&
=\tbigwedge_{\lambda\in \R^{\objectQuantaloidc}_0}\tbigwedge_{y\in X_0} \bigl((\varrho(\lambda, \alpha(y,x))\circ \Qpresheaveg(y))\searrow\lambda\bigr)\\
&=\tbigwedge_{y\in X_0} \bigl(\Qpresheaveg(y)\searrow\bigl(\tbigwedge_{\lambda\in \R^{\objectQuantaloidc}_0}(\varrho(\lambda, \alpha(y,x))\searrow \lambda)\bigr)\bigr)\\
&=
\tbigwedge_{y\in X_0} (\Qpresheaveg(y)\searrow\alpha(y,x))= \alpha(\sup\nolimits_{X}(\objectQuantaloida,\Qpresheaveg),x).\end{align*}
Hence, the cocontinuity of $\alpha(\underline{\phantom{x}},x)$ is verified.
\end{proof}
 
\begin{comment} The previous lemma is a generalization of the well known fact that in complete lattices the characteristic map of the principal down-set $\newdownarrow x=\set{y\in X\mid y\le x}$ is join-preserving.
\end{comment}
  
Let $\cat{Cat}_{sc}(\Q)$ be the category of separated and cocomplete \donotbreakdash{$\Q$}categories, with cocontinuous functors as morphisms. We claim that the \donotbreakdash{$\Q$}category $(\Q,\tau)$, where $\tau(\objectQuantaloida,\objectQuantaloidb)=\tbigvee \hom (\objectQuantaloidb,\objectQuantaloida)$, is the terminal object of $\cat{Cat}_{sc}(\Q)$. 
The type function of $(\Q,\tau)$ is the 
identity, so $(\Q,\tau)$ is separated. Further, for every contravariant \donotbreakdash{$\Q$}presheaf  $(\objectQuantaloida,\Qpresheaveg)$ on $(\Q,\tau)$, the relation
  $\tau(\objectQuantaloida,\objectQuantaloidb)= \tbigwedge_{\objectQuantaloidc \in \Q_0} (\Qpresheaveg(\objectQuantaloidc) \searrow \tau(\objectQuantaloidc,\objectQuantaloidb))$ holds. This 
  means that the type function of
 $P(\Q,\tau)$ is left adjoint to the \donotbreakdash{$\Q$}enriched Yoneda embedding of $(\Q,\tau)$, so $(\Q,\tau)$ is cocomplete and therefore an object of $\cat{Cat}_{sc}(\Q)$. 
Finally,  the commutative diagram
\[
\bfig
 \square(0,0)|alra|/>`>`>`>/<1000,300>[(P(X,\alpha),\pi)`(P(\Q,\tau),\pi)`(X,\alpha)`(\Q,\tau);\mathds{P}(|\phantom{x}|)`\sup\nolimits_{X}`|\phantom{x}|=\sup\nolimits_{X}`|\phantom{x}|]
 \efig\]
points out that the type function of every separated and cocomplete \donotbreakdash{$\Q$}category $(X,\alpha)$ viewed as a functor $|\phantom{x}|\colon (X,\alpha) \to (\Q,\tau)$ is cocontinuous.\\
 Conclusion: $(\Q,\tau)$ is the terminal object in $\cat{Cat}_{sc}(\Q)$. 

\begin{proposition}[Points in $\cat{Cat}_{sc}(\Q)$] \label{prop4.5AA} Let $(X,\alpha)$ be a separated and cocomplete \donotbreakdash{$\Q$}ca\-tegory. A functor $\varphi\colon(\Q,\tau) \to (X,\alpha)$ is 
cocontinuous if and only if $\varphi$ satisfies the following additional property for all $x\in X_0$ and for all $\objectQuantaloidc\in \Q_0$\textup:
\stepcounter{num}
\begin{equation} \label{n4.1AA} \alpha\bigl(\varphi(\objectQuantaloidc),x\bigr)= \tau\bigl(\objectQuantaloidc,|x|\bigr).
\end{equation}
\end{proposition}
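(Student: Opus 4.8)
The plan is to turn cocontinuity of $\varphi$ into a family of identities between arrows of $\Q$, to exploit that in the terminal object $(\Q,\tau)$ every supremum is trivial, and then to read off that this family collapses exactly to~(\ref{n4.1AA}). I would start from the commuting-square description of cocontinuity, which for $\varphi\colon(\Q,\tau)\to(X,\alpha)$ reads $\varphi\circ\sup\nolimits_{(\Q,\tau)}=\sup\nolimits_{X}\circ\,\mathds{P}(\varphi)$ as functors $P(\Q,\tau)\to(X,\alpha)$. Since $(X,\alpha)$ is separated and the equalities $\alpha(u,z)=\alpha(v,z)$ for all $z\in X_0$ already force $u=v$ (evaluate at $z=u$ and $z=v$ and apply the separation axiom, the type constraint $|u|=|v|$ being automatic because morphisms have well-defined codomains), this equality of functors is equivalent to the identity
\[\alpha\bigl(\varphi(\sup\nolimits_{(\Q,\tau)}(\objectQuantaloida,\Qpresheaveg)),x\bigr)=\alpha\bigl(\sup\nolimits_{X}(\mathds{P}(\varphi)(\objectQuantaloida,\Qpresheaveg)),x\bigr)\]
for all contravariant $\Q$-presheaves $(\objectQuantaloida,\Qpresheaveg)$ on $(\Q,\tau)$ and all $x\in X_0$. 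As observed just before the statement, the left adjoint of the Yoneda embedding of $(\Q,\tau)$ is the type map, so $\sup\nolimits_{(\Q,\tau)}(\objectQuantaloida,\Qpresheaveg)=\objectQuantaloida$ and the left-hand side of the displayed identity is simply $\alpha(\varphi(\objectQuantaloida),x)$.

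Next I would simplify the right-hand side. Using the formula $\alpha(\sup\nolimits_{X}(\objectQuantaloidb,\Qpresheaveh),x)=\tbigwedge_{y\in X_0}\bigl(\Qpresheaveh(y)\searrow\alpha(y,x)\bigr)$ coming from Lemma~\ref{lem4.0AA}, the description of $\mathds{P}(\varphi)$, the residuation laws $\bigl(\tbigvee_i a_i\bigr)\searrow c=\tbigwedge_i(a_i\searrow c)$ and $(a\circ b)\searrow c=b\searrow(a\searrow c)$, together with the elementary identity $\tbigwedge_{y\in X_0}\bigl(\alpha(y,\varphi(\objectQuantaloidc))\searrow\alpha(y,x)\bigr)=\alpha(\varphi(\objectQuantaloidc),x)$ (full faithfulness of the Yoneda embedding, valid in any $\Q$-category), the right-hand side reduces to $\tbigwedge_{\objectQuantaloidc\in\Q_0}\bigl(\Qpresheaveg(\objectQuantaloidc)\searrow\alpha(\varphi(\objectQuantaloidc),x)\bigr)$. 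Hence cocontinuity of $\varphi$ becomes equivalent to the requirement that
\[\alpha(\varphi(\objectQuantaloida),x)=\tbigwedge_{\objectQuantaloidc\in\Q_0}\bigl(\Qpresheaveg(\objectQuantaloidc)\searrow\alpha(\varphi(\objectQuantaloidc),x)\bigr)\]
for every $(\objectQuantaloida,\Qpresheaveg)\in P(\Q,\tau)_0$ and every $x\in X_0$.

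From this intermediate characterisation both implications are short. For the passage from cocontinuity to~(\ref{n4.1AA}), I would evaluate the intermediate identity at the bottom presheaf of a fixed type $\objectQuantaloidc$, that is, the presheaf on $(\Q,\tau)$ which is pointwise the least arrow; because composition preserves $\bot$, each factor $\Qpresheaveg(\objectQuantaloidb)\searrow\alpha(\varphi(\objectQuantaloidb),x)$ then equals $\tbigvee\hom(|x|,\objectQuantaloidc)=\tau(\objectQuantaloidc,|x|)$, so the meet over $\objectQuantaloidb$ is $\tau(\objectQuantaloidc,|x|)$, and we obtain $\alpha(\varphi(\objectQuantaloidc),x)=\tau(\objectQuantaloidc,|x|)$. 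For the converse, assuming~(\ref{n4.1AA}): since $\alpha(\varphi(\objectQuantaloidc),x)=\tau(\objectQuantaloidc,|x|)$ is the top of $\hom(|x|,\objectQuantaloidc)$, the arrow $\Qpresheaveg(\objectQuantaloidc)\searrow\alpha(\varphi(\objectQuantaloidc),x)$ is the top of $\hom(|x|,\objectQuantaloida)$, namely $\tau(\objectQuantaloida,|x|)$; taking the meet over $\objectQuantaloidc$ leaves $\tau(\objectQuantaloida,|x|)$, which by~(\ref{n4.1AA}) is again $\alpha(\varphi(\objectQuantaloida),x)$, so the intermediate criterion holds and $\varphi$ is cocontinuous.

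The one genuinely technical step is the simplification of the right-hand side: carrying the residuations through the join defining $\mathds{P}(\varphi)$ while keeping every domain and codomain correct and using that $\searrow$ is order-reversing in its first argument. Everything else is bookkeeping, once one observes that suprema in $(\Q,\tau)$ are trivial and that condition~(\ref{n4.1AA}) says precisely that $\varphi(\objectQuantaloidc)$ is the supremum of the bottom presheaf of type $\objectQuantaloidc$ --- equivalently, the least element of the $\objectQuantaloidc$-section of $(X,\alpha)$.
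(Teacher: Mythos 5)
Your proposal is correct and follows essentially the same route as the paper's proof: reduce $\alpha(\sup\nolimits_{X}(\mathds{P}(\varphi)(\objectQuantaloida,\Qpresheaveg)),x)$ to $\tbigwedge_{\objectQuantaloidc}(\Qpresheaveg(\objectQuantaloidc)\searrow\alpha(\varphi(\objectQuantaloidc),x))$ via the Yoneda identity, test against the pointwise-bottom presheaf for necessity, and use that each residual against the top $\tau(\objectQuantaloidc,|x|)$ is again a top for sufficiency, concluding with the separation axiom. The only cosmetic difference is that you isolate the intermediate characterization explicitly, whereas the paper folds it into the two computations.
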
 

\begin{proof}
Let $\bot_{\objectQuantaloidb\objectQuantaloidc}$ be the universal lower bound of $\hom (\objectQuantaloidb,\objectQuantaloidc)$. Since the composition in $\Q$ is join-preserving, we first notice that the relation
\stepcounter{num} 
\begin{equation} \label{n4.4CC} \bot_{\objectQuantaloidb\objectQuantaloidc}\circ u\le v
\end{equation}
holds for all $u\in \hom (s,q)$ and $v\in \hom (s,r)$, and we define a contravariant \donotbreakdash{$\Q$}presheaf $(\objectQuantaloidb, \Qpresheaveg)$ on $(\Q,\tau)$ by:
\[ \Qpresheaveg(\objectQuantaloidc)=\bot_{\objectQuantaloidb\objectQuantaloidc}, \qquad \objectQuantaloidc\in \Q_0.
\]
If $\varphi$ is cocontinuous, then we apply (\ref{n4.4CC}) and obtain:
\begin{align*}\alpha\bigl( \varphi( \objectQuantaloidb),x\bigr)&=\alpha\bigl(\sup\nolimits_{X}(\mathds{P}(\varphi)(\objectQuantaloidb, \Qpresheaveg)),x\bigr) =\tbigwedge_{\objectQuantaloidc\in \Q_0} \bigl(\Qpresheaveg(\objectQuantaloidc)\searrow\bigl(\tbigwedge_{z\in X_0} \alpha\bigl(z,\varphi(\objectQuantaloidc)\bigr)\searrow \alpha(z,x)\bigr)\bigr)\\
&= \tbigwedge_{\objectQuantaloidc\in \Q_0}\bot_{\objectQuantaloidb\objectQuantaloidc} \searrow \alpha\bigl(\varphi(\objectQuantaloidc),x\bigr)= \tau(\objectQuantaloidb,|x|).
\end{align*}
Conversely, if condition (\ref{n4.1AA}) holds, then for any contravariant \donotbreakdash{$\Q$}presheaf $(\objectQuantaloida, \Qpresheaveg)$ on $(\Q,\tau)$ and  any $x\in X_0$ we compute:
\begin{align*}\alpha\bigl(\sup\nolimits_{X}(\mathds{P}(\varphi)(\objectQuantaloida, \Qpresheaveg)),x\bigr)&=
\tbigwedge_{\objectQuantaloidc\in \Q_0} \bigl(\Qpresheaveg(\objectQuantaloidc)\searrow \alpha(\varphi(\objectQuantaloidc),x)\bigr)  =\tbigwedge_{\objectQuantaloidc\in \Q_0} \bigl(\Qpresheaveg(\objectQuantaloidc)\searrow \tau(\objectQuantaloidc,|x|)\bigr)\\
&=\tau(\objectQuantaloida,|x|)= \alpha(\varphi(\objectQuantaloida),x).
\end{align*}
 Since $(X,\alpha)$ is separated, it follows that $\varphi\bigl(\sup\nolimits_{\Q}(\objectQuantaloida, \Qpresheaveg)\big)=\varphi(\objectQuantaloida)=\sup\nolimits_{X}(\mathds{P}(\varphi)(\objectQuantaloida, \Qpresheaveg))$ --- i.e.,~$\varphi$ is cocontinuous.
 \end{proof}

\begin{remark} \label{rem4.6AA} (1) (Weak subobject classifier) To prepare the definition of the  weak subobject classifier (cf.\ \cite{ho25}), we introduce some  additional terminology. 
Let $\objectQuantaloidc$ be an object of $\Q$. Then $\tau(\objectQuantaloidc,\objectQuantaloidc)$ is the top element of $\hom (\objectQuantaloidc,\objectQuantaloidc)$. 
A morphism \newmorphism{16}{\objectQuantaloidc}{\arrowQuantaloidf}{\objectQuantaloidb} is called \emph{right-sided} if $\arrowQuantaloidf \circ \tau(\objectQuantaloidc,\objectQuantaloidc)\le \arrowQuantaloidf$. 
If $\tau(\objectQuantaloidc,\objectQuantaloidc)=1_{\objectQuantaloidc}$, then every morphism with domain $\objectQuantaloidc$ is right-sided.
Further, let $\mathds{R}(\R_0^{\objectQuantaloidc})$ denote the set of all right-sided morphisms ${\arrowQuantaloidf}$ with $\dom (\arrowQuantaloidf)=\objectQuantaloidc$. Then $\mathds{R}(\R_0^{\objectQuantaloidc})\subseteq\R_0^{\objectQuantaloidc}$. On $\mathds{R}(\R_0^{\objectQuantaloidc})$, we consider the structure of $(\R_0^{\objectQuantaloidc},\varrho)$    
restricted to $\mathds{R}(\R_0^{\objectQuantaloidc})$ 
 --- i.e.,~the type map of $\mathds{R}(\R^{\objectQuantaloidc})$ is the 
restriction of the type map of $\R^{\objectQuantaloidc}$, the hom-arrow-assignment is given by
\[ \varrho({\arrowQuantaloidf},{\arrowQuantaloidg})={\arrowQuantaloidf}\swarrow {\arrowQuantaloidg}, \qquad {\arrowQuantaloidf},{\arrowQuantaloidg}\in \mathds{R}(\R_0^{\objectQuantaloidc}),\]
and the structure map $\sup\nolimits_{\mathds{R}(\R^{\objectQuantaloidc})}\colon (P(\mathds{R}(\R^{\objectQuantaloidc}),\varrho),\pi)\to( \mathds{R}(\R^{\objectQuantaloidc}),\varrho)$ is defined by
\[\sup\nolimits_{\mathds{R}(\R^{\objectQuantaloidc})}(\objectQuantaloida,\Qpresheaveg)=\tbigwedge_{{\arrowQuantaloidf}\in \mathds{R}(\R_0^{\objectQuantaloidc})} (\Qpresheaveg({\arrowQuantaloidf})\searrow{\arrowQuantaloidf}),\qquad(\objectQuantaloida,\Qpresheaveg) \in P(\mathds{R}(\R^{\objectQuantaloidc}),\varrho)_0.
\]
In fact, $\sup\nolimits_{\mathds{R}(\R^{\objectQuantaloidc})}(\objectQuantaloida,\Qpresheaveg)$ is right-sided, since
\[ \sup\nolimits_{\R^{\objectQuantaloidc}}(\objectQuantaloida,\Qpresheaveg)\circ \tau(\objectQuantaloidc,\objectQuantaloidc)\le \tbigwedge_{{\arrowQuantaloidf}\in \R^{\objectQuantaloidc}_0}\bigl(\Qpresheaveg({\arrowQuantaloidf})\searrow({\arrowQuantaloidf}\circ \tau(\objectQuantaloidc,\objectQuantaloidc))\bigr)\le\sup\nolimits_{\mathds{R}(\R^{\objectQuantaloidc})}(\objectQuantaloida,\Qpresheaveg).
\]
As we will see below, the separated and cocomplete \donotbreakdash{$\Q$}category $(\mathds{R}(\R^{\objectQuantaloidc}),\varrho)$ will play the role of a weak subobject classifier in $\cat{Cat}_{sc}(\Q)$.
\\[2pt]
(2) (Arrow true) The map $\tau(\underline{\phantom{x}},\objectQuantaloidc)$ defines a functor $\tau(\underline{\phantom{x}},\objectQuantaloidc)\colon(\Q,\tau)\to(\mathds{R}(\R^{\objectQuantaloidc}),\varrho)$, and the relation
  ${\tau(\objectQuantaloidb,|{\arrowQuantaloidf}|) \circ {\arrowQuantaloidf}} \le \tau(\objectQuantaloidb,\objectQuantaloidc)$ holds for all $\objectQuantaloidb \in \Q_0$ and ${\arrowQuantaloidf}\in \mathds{R}(\R^{\objectQuantaloidc}_0)$. Hence, 
 \[\varrho(\tau(\objectQuantaloidb,\objectQuantaloidc),{\arrowQuantaloidf})=\tau(\objectQuantaloidb,|{\arrowQuantaloidf}|)\] 
 follows, and Proposition~\ref{prop4.5AA} ensures that the functor $\tau(\underline{\phantom{x}},\objectQuantaloidc)$
   is cocontinuous. This observation motivates the following terminology: The arrow $\tau(\underline{\phantom{x}},\objectQuantaloidc)$ is called the \emph{arrow true of type $\objectQuantaloidc$}, and is denoted by $\true_{\objectQuantaloidc}$. 
\end{remark}


\begin{theorem}\label{them4.7AA} Let $\Q$ be a \donotbreakdash{$\objectQuantaloidc$}stable small quantaloid, $(X,\alpha)$ be a separated and cocomplete \donotbreakdash{$\Q$}category, and let $\varphi\colon(\Q,\tau) \to (X,\alpha)$ be a cocontinuous functor. Then the following diagram is a pullback square in $\cat{Cat}_{sc}(\Q)$\textup:
\stepcounter{num}
\begin{equation} \label{n4.2BB}
\bfig
 \square(0,0)|alra|/<-`>`>`<-/<1000,300>[(X,\alpha)`(\Q,\tau)`{(\mathds{R}(\R^{\objectQuantaloidc}),\varrho)}`(\Q,\tau);\varphi`\alpha(\underline{\phantom{x}},\varphi(\objectQuantaloidc))`1_{\Q_0}`\true_{\objectQuantaloidc}]
 \efig
\end{equation}
\end{theorem}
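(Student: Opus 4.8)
The square exhibits the top-right copy of $(\Q,\tau)$ as the pullback of the cospan formed by $\alpha(\underline{\phantom{x}},\varphi(\objectQuantaloidc))\colon(X,\alpha)\to(\mathds{R}(\R^{\objectQuantaloidc}),\varrho)$ and $\true_{\objectQuantaloidc}\colon(\Q,\tau)\to(\mathds{R}(\R^{\objectQuantaloidc}),\varrho)$, with projections $\varphi$ and $1_{\Q_0}$. Since $\varphi$ is cocontinuous, Proposition~\ref{prop4.5AA} gives $\alpha(\varphi(\objectQuantaloidb),x)=\tau(\objectQuantaloidb,|x|)$ for all $\objectQuantaloidb\in\Q_0$ and $x\in X_0$, an identity I would use several times. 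First I would verify that the square commutes: the composite $\alpha(\underline{\phantom{x}},\varphi(\objectQuantaloidc))\circ\varphi$ sends $\objectQuantaloidb\in\Q_0$ to $\alpha(\varphi(\objectQuantaloidb),\varphi(\objectQuantaloidc))=\tau(\objectQuantaloidb,\objectQuantaloidc)=\true_{\objectQuantaloidc}(\objectQuantaloidb)$. I would also record that $\alpha(\underline{\phantom{x}},\varphi(\objectQuantaloidc))$ indeed takes values among right-sided presheaves, since $\alpha(y,\varphi(\objectQuantaloidc))\circ\tau(\objectQuantaloidc,\objectQuantaloidc)=\alpha(y,\varphi(\objectQuantaloidc))\circ\alpha(\varphi(\objectQuantaloidc),\varphi(\objectQuantaloidc))=\alpha(y,\varphi(\objectQuantaloidc))$ by Proposition~\ref{prop4.5AA} and (\ref{n2.1C4}); that it is cocontinuous as a functor into $(\mathds{R}(\R^{\objectQuantaloidc}),\varrho)$ follows from Lemma~\ref{lem4.3AA} and Remark~\ref{rem4.6AA}, and $\true_{\objectQuantaloidc}$ is cocontinuous by Remark~\ref{rem4.6AA}(2).

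\textbf{Reduction of the universal property.} Let $(Z,\gamma)$ together with cocontinuous functors $\psi\colon(Z,\gamma)\to(X,\alpha)$ and $\chi\colon(Z,\gamma)\to(\Q,\tau)$ be a competing cone in $\cat{Cat}_{sc}(\Q)$. Because the right-hand leg of the square is the identity $1_{\Q_0}$, any mediating morphism must equal $\chi$, so uniqueness is automatic; moreover, as $\chi$ is a functor into $(\Q,\tau)$ whose type map is $1_{\Q_0}$, necessarily $\chi(z)=|z|$ for all $z\in Z_0$, and $\chi$ is cocontinuous simply by being part of the cone. Hence the theorem reduces to proving $\varphi(|z|)=\psi(z)$ for every $z\in Z_0$, where the commutativity of the cone reads
\[
\alpha(\psi(z),\varphi(\objectQuantaloidc))=\tau(|z|,\objectQuantaloidc),\qquad z\in Z_0.
\]
Since $\psi$ and $\varphi$ are functors, $|\psi(z)|=|z|=|\varphi(|z|)|$. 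So by Lemma~\ref{lem4.1AA}(a) and the separation axiom for $(X,\alpha)$, it is enough to prove
\[
1_{|z|}\le\alpha(\varphi(|z|),\psi(z))\qquad\text{and}\qquad 1_{|z|}\le\alpha(\psi(z),\varphi(|z|)).
\]

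\textbf{The two inequalities.} The first is immediate from Proposition~\ref{prop4.5AA}: $\alpha(\varphi(|z|),\psi(z))=\tau(|z|,|\psi(z)|)=\tau(|z|,|z|)$, which is the top element of $\hom(|z|,|z|)$, hence above $1_{|z|}$ by \ref{C3}. The second inequality is the heart of the matter and is precisely where $\objectQuantaloidc$-stability enters. Chaining the transitivity axiom \ref{C2}, then the cone identity for the first factor and Proposition~\ref{prop4.5AA} for the second, and finally the $\objectQuantaloidc$-stability of $\Q$ applied to the object $|z|$ (which, since $\tau(\objectQuantaloida,\objectQuantaloidb)=\tbigvee\hom(\objectQuantaloidb,\objectQuantaloida)$, reads $1_{|z|}\le\tau(|z|,\objectQuantaloidc)\circ\tau(\objectQuantaloidc,|z|)$), I would estimate
\[
\alpha(\psi(z),\varphi(|z|))\ \ge\ \alpha(\psi(z),\varphi(\objectQuantaloidc))\circ\alpha(\varphi(\objectQuantaloidc),\varphi(|z|))\ =\ \tau(|z|,\objectQuantaloidc)\circ\tau(\objectQuantaloidc,|z|)\ \ge\ 1_{|z|}.
\]
By Lemma~\ref{lem4.1AA}(a) these two inequalities rewrite as $\alpha(\psi(z),\psi(z))=\alpha(\psi(z),\varphi(|z|))$ and $\alpha(\varphi(|z|),\varphi(|z|))=\alpha(\varphi(|z|),\psi(z))$, which together with the equality of types force $\psi(z)=\varphi(|z|)$ by separatedness of $(X,\alpha)$. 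Thus $\chi$ is the unique (necessarily cocontinuous) mediating morphism, and the square is a pullback in $\cat{Cat}_{sc}(\Q)$.

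\textbf{Expected main obstacle.} The only genuinely creative step is the interpolation in the last display: routing $\psi(z)$ through the chosen $\objectQuantaloidc$-stable datum $\varphi(\objectQuantaloidc)$ via transitivity \ref{C2}, and then invoking $\objectQuantaloidc$-stability to pull the ``round trip'' $\tau(|z|,\objectQuantaloidc)\circ\tau(\objectQuantaloidc,|z|)$ back above the unit $1_{|z|}$. Everything else is scaffolding --- uniqueness of the mediator is free because the right-hand leg is an identity, and equality of two elements of $X_0$ is reduced, via separatedness and Lemma~\ref{lem4.1AA}, to two inequalities involving identity arrows, one trivial and the other delivered exactly by $\objectQuantaloidc$-stability. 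A subsidiary technical point, requiring a short but non-vacuous argument, is the cocontinuity of $\alpha(\underline{\phantom{x}},\varphi(\objectQuantaloidc))$ as a functor into the right-sided presheaf $\Q$-category $(\mathds{R}(\R^{\objectQuantaloidc}),\varrho)$ (rather than merely into $(\R^{\objectQuantaloidc},\varrho)$), for which I would lean on the sup-formula of Remark~\ref{rem4.6AA}.
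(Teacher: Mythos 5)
Your proof is correct and follows essentially the same route as the paper: commutativity of the square via Proposition~\ref{prop4.5AA}, and the universal property by routing $\alpha(\psi(z),\varphi(|z|))$ through $\varphi(\objectQuantaloidc)$ using \ref{C2} and then invoking \donotbreakdash{$\objectQuantaloidc$}stability, before concluding with Lemma~\ref{lem4.1AA}\,(a) and separatedness. The only (harmless) deviation is that you obtain the first inequality $1_{|z|}\le\alpha(\varphi(|z|),\psi(z))$ directly from Proposition~\ref{prop4.5AA} as $\tau(|z|,|z|)\ge 1_{|z|}$, whereas the paper derives both inequalities symmetrically from the stability composition; your extra bookkeeping on uniqueness of the mediator and on the codomain restriction to $(\mathds{R}(\R^{\objectQuantaloidc}),\varrho)$ is sound.
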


\begin{proof} Referring to equation (\ref{n2.1C4}), we first observe:
\[\alpha(\underline{\,\,\,\,},\varphi(\objectQuantaloidc))=\alpha(\underline{\,\,\,\,},\varphi(\objectQuantaloidc)\circ \alpha(\varphi(\objectQuantaloidc),\varphi(\objectQuantaloidc))=\alpha(\underline{\,\,\,\,},\varphi(\objectQuantaloidc))\circ \tau(\objectQuantaloidc,\objectQuantaloidc)),\]
which implies that the range of $\alpha(\underline{\,\,\,\,},\varphi(\objectQuantaloidc))$ is contained in $\mathds{R}(\R_0^{\objectQuantaloidc})$. Consequently, the diagram (\ref{n4.2BB}) is commutative. To verify the universal property, let $(Y,\beta)$ be another separated and cocomplete \donotbreakdash{$\Q$}category, and let $\psi\colon(Y,\beta)\to (X,\alpha)$ be a cocontinuous functor satisfying the condition:
\[\alpha(\psi(y),\varphi(\objectQuantaloidc))= \true_{\objectQuantaloidc}(|y|)=\tau(|y|,\objectQuantaloidc),\qquad y\in Y_0.
  \]
By Proposition~\ref{prop4.5AA}, we also have $\alpha(\varphi(\objectQuantaloidc),\psi(y))= \tau(\objectQuantaloidc,|\psi(y)|)=\tau(\objectQuantaloidc,|y|)$. Now we consider the compositions:
\[
 \tau(|y|,\objectQuantaloidc)\circ \tau(\objectQuantaloidc,|y|)= \alpha(\psi(y),\varphi(\objectQuantaloidc))\circ \alpha(\varphi(\objectQuantaloidc), \varphi(|y|))\le \alpha(\psi(y),\varphi(|y|))
 \]
  \noindent and
\[
\tau(|y|,\objectQuantaloidc)\circ \tau(\objectQuantaloidc,|y|)=\alpha(\varphi(|y|),\varphi(\objectQuantaloidc))\circ\alpha(\varphi(\objectQuantaloidc),\psi(y))\le \alpha(\varphi(|y|),\psi(y)).
  \]
By the \donotbreakdash{$\objectQuantaloidc$}stability of $\Q$, it follows that $1_{|y|}\le  \alpha(\psi(y),\varphi(|y|))$ and $1_{|y|}\le  \alpha(\varphi(|y|,\psi(y))$.
 Using the separation axiom (cf.\ Lemma~\ref{lem4.1AA}\,(a)), we conclude $\psi(y)=\varphi(|y|)$, which means the following diagram is commutative:
\[
\bfig
 \dtriangle(0,0)|ara|/>`>`<-/<600,300>[(Y,\beta)`(X,\alpha)`(\Q,\tau);\psi`|\phantom{x}|`\psi]
 \efig
\]
Thus, the universal property of diagram (\ref{n4.2BB}) is verified.
\end{proof}

\begin{theorem}\label{them4.8AA} Let $\Q$ be a quantaloid, and let $(X,\alpha)$ and $(Y,\beta)$ be  separated and cocomplete \donotbreakdash{$\Q$}categories. 
Further, suppose that $\varphi\colon (Y,\beta)\to (X,\alpha)$ and $\chi\colon(X,\alpha)\to (\mathds{R}(\R^{\objectQuantaloidc}),\varrho)$ are  cocontinuous functors that turn the following diagram into a pullback square\textup:
\stepcounter{num}
\begin{equation} \label{n4.3AA}
\bfig
 \square(0,0)|alra|/<-`>`>`<-/<1000,300>[(X,\alpha)`(Y,\beta)`{(\mathds{R}(\R^{\objectQuantaloidc}),\varrho)}`(\Q,\tau);\varphi`\chi`|\phantom{x}|`\true_{\objectQuantaloidc}]
 \efig
\end{equation} 
Then $\chi$ has the form\textup:
\stepcounter{num}
\begin{equation}\label{n4.4AA}
\chi(x)= \tbigvee_{y\in Y_0} \alpha(x,\varphi(y))\circ \tau(|y|,\objectQuantaloidc), \qquad x\in X_0.
\end{equation}
\end{theorem}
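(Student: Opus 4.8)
The plan is to prove the two pointwise inequalities $\chi'\le\chi$ and $\chi\le\chi'$, writing $\chi'(x):=\tbigvee_{y\in Y_0}\alpha(x,\varphi(y))\circ\tau(|y|,\objectQuantaloidc)$ for the right-hand side of (\ref{n4.4AA}). The inequality $\chi'\le\chi$ I would obtain at once from functoriality of $\chi$ together with the commutativity of (\ref{n4.3AA}): since $\alpha(x,\varphi(y))\le\varrho(\chi(x),\chi(\varphi(y)))$ and $\chi(\varphi(y))=\true_{\objectQuantaloidc}(|y|)=\tau(|y|,\objectQuantaloidc)$, one gets $\alpha(x,\varphi(y))\le\chi(x)\swarrow\tau(|y|,\objectQuantaloidc)$, hence $\alpha(x,\varphi(y))\circ\tau(|y|,\objectQuantaloidc)\le\chi(x)$, and then take the join over $y\in Y_0$. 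This is the step where it matters that the left column of (\ref{n4.3AA}) is $\true_{\objectQuantaloidc}$ and the right column the type functor.

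For the reverse inequality I would first identify the pullback concretely. Since $\cat{Cat}_{sc}(\Q)$ is the category of algebras of the contravariant \donotbreakdash{$\Q$}presheaf monad over $\cat{Cat}(\Q)$, which is topological over $\cat{Set}/\Q_0$, pullbacks in $\cat{Cat}_{sc}(\Q)$ are formed in $\cat{Set}/\Q_0$ with the initial \donotbreakdash{$\Q$}structure. Because $\codom\chi(w)=|w|$ while $\codom\true_{\objectQuantaloidc}(\objectQuantaloidb)=\objectQuantaloidb$, a compatible pair $(w,\objectQuantaloidb)$ forces $\objectQuantaloidb=|w|$; and because $\tau(|y_1|,|y_2|)$ is the top of $\hom(|y_2|,|y_1|)$, the initial structure reduces to the restriction of $\alpha$. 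Hence, up to a canonical isomorphism under which $\chi'$ is unchanged, I may assume that $(Y,\beta)$ is the full sub-\donotbreakdash{$\Q$}category of $(X,\alpha)$ carried by $Y_0=\set{w\in X_0\mid\chi(w)=\tau(|w|,\objectQuantaloidc)}$ and that $\varphi$ is the inclusion.

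Now fix $x\in X_0$. I would form the contravariant \donotbreakdash{$\Q$}presheaf $h$ of type $\objectQuantaloidc$ on $(X,\alpha)$ given by $h(z)=\alpha(z,x)\circ\chi(x)$ (conditions \ref{P0}, \ref{P1} follow from \ref{C2} and \ref{C3}), set $x^{\ast}=\sup\nolimits_{X}(\objectQuantaloidc,h)$ (so $|x^{\ast}|=\objectQuantaloidc$), and use cocontinuity of $\chi$: $\chi(x^{\ast})=\sup\nolimits_{\mathds{R}(\R^{\objectQuantaloidc})}(\mathds{P}(\chi)(\objectQuantaloidc,h))$. The key observation is that for every $f\in\mathds{R}(\R_0^{\objectQuantaloidc})$, functoriality of $\chi$ gives $\alpha(z,x)\circ\chi(x)\le\chi(z)$, so $(\mathds{P}(\chi)(\objectQuantaloidc,h))(f)=\tbigvee_{z\in X_0}(f\swarrow\chi(z))\circ\alpha(z,x)\circ\chi(x)\le\tbigvee_{z}(f\swarrow\chi(z))\circ\chi(z)\le f$; therefore $(\mathds{P}(\chi)(\objectQuantaloidc,h))(f)\searrow f\ge f\searrow f\ge 1_{\objectQuantaloidc}$, and taking the meet over $f$ in the formula for $\sup\nolimits_{\mathds{R}(\R^{\objectQuantaloidc})}$ (Remark~\ref{rem4.6AA}\,(1)) yields $1_{\objectQuantaloidc}\le\chi(x^{\ast})$. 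Since $\chi(x^{\ast})$ is right-sided, $\tau(\objectQuantaloidc,\objectQuantaloidc)=1_{\objectQuantaloidc}\circ\tau(\objectQuantaloidc,\objectQuantaloidc)\le\chi(x^{\ast})\circ\tau(\objectQuantaloidc,\objectQuantaloidc)\le\chi(x^{\ast})\le\tau(\objectQuantaloidc,\objectQuantaloidc)$, so $\chi(x^{\ast})=\tau(\objectQuantaloidc,\objectQuantaloidc)=\true_{\objectQuantaloidc}(|x^{\ast}|)$; that is, $x^{\ast}\in Y_0$. Finally, as in the proof of Lemma~\ref{lem4.0AA} one has $h\le\eta_{(X,\alpha)}(\sup\nolimits_{X}(\objectQuantaloidc,h))$, i.e.\ $h(z)\le\alpha(z,x^{\ast})$; in particular $\alpha(x,x^{\ast})\ge h(x)=\alpha(x,x)\circ\chi(x)\ge\chi(x)$, whence $\chi'(x)\ge\alpha(x,x^{\ast})\circ\tau(\objectQuantaloidc,\objectQuantaloidc)\ge\chi(x)\circ\tau(\objectQuantaloidc,\objectQuantaloidc)\ge\chi(x)$, completing the argument. (This complements Theorem~\ref{them4.7AA}: there such a $\chi$ is produced, here it is shown to be forced.)

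I expect the main obstacle to be the construction in the third paragraph: recognizing that the pullback is exactly the ``full value $\tau(|\cdot|,\objectQuantaloidc)$'' sub-\donotbreakdash{$\Q$}category $Y_0$, and that the weight $h$ drives $\chi(x^{\ast})$ up to the top of $\hom(\objectQuantaloidc,\objectQuantaloidc)$ by cocontinuity alone. The estimate $(\mathds{P}(\chi)(\objectQuantaloidc,h))(f)\le f$ and the passage to $1_{\objectQuantaloidc}\le\chi(x^{\ast})$ --- and hence, via right-sidedness, to equality with $\tau(\objectQuantaloidc,\objectQuantaloidc)$ --- is where the structure maps of $(\mathds{R}(\R^{\objectQuantaloidc}),\varrho)$ and of $\mathds{P}$ have to be combined with care; the remaining manipulations are routine applications of \ref{C2}, \ref{C3}, \ref{D2} and the residuation laws.
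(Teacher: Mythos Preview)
Your proof is correct and shares with the paper the essential weighted-colimit construction: both form the presheaf $h(z)=\alpha(z,x)\circ\chi(x)$, set $x^{\ast}=\sup\nolimits_{X}(\objectQuantaloidc,h)$, show $\chi(x^{\ast})=\tau(\objectQuantaloidc,\objectQuantaloidc)$, and then use $h\le\eta_{(X,\alpha)}(x^{\ast})$ to bound $\chi(x)$ by a term of the required join. The genuine difference lies in how you obtain ``$x^{\ast}$ lies in $Y$''. The paper does this abstractly via the universal property: for any $w$ with $\chi(w)=\tau(|w|,\objectQuantaloidc)$ it builds the cocontinuous extension $\psi\colon(P(\{w\},\delta),\pi)\to(X,\alpha)$, checks $\chi\circ\psi=\true_{\objectQuantaloidc}\circ|\phantom{x}|$, and invokes the pullback to factor $\psi$ through $\varphi$, whence $w\in\varphi(Y_0)$. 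You instead identify the pullback concretely as the full sub-\donotbreakdash{$\Q$}category on $\set{w\mid\chi(w)=\tau(|w|,\objectQuantaloidc)}$, appealing to creation of limits by the forgetful functor from the Eilenberg--Moore category of the presheaf monad. Your route is shorter and avoids the auxiliary free cocompletion, but it imports an external (albeit standard) fact not proved in the paper; the paper's route is self-contained within the constructions already developed. A secondary, cosmetic difference: the paper computes $\chi(x^{\ast})=\chi(x)\searrow\chi(x)=\tau(\objectQuantaloidc,\objectQuantaloidc)$ directly, whereas you first obtain $1_{\objectQuantaloidc}\le\chi(x^{\ast})$ from the estimate $(\mathds{P}(\chi)(\objectQuantaloidc,h))(f)\le f$ and then upgrade to equality via right-sidedness --- both are valid.
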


\begin{proof} Let $x\in X_0$. From the commutativity of (\ref{n4.3AA}) and axiom \ref{P1}, we obtain: 
\[\tbigvee_{y\in Y_0} \alpha(x,\varphi(y))\circ \tau(|y|,\objectQuantaloidc)\le \tbigvee_{y\in Y_0} \alpha(x,\varphi(y))\circ \chi(\varphi(y))\le \chi(x).
\] 
(1) Now we choose $x\in X_0$ such that $\chi(x)=\tau(|x|,\objectQuantaloidc)$, and show that $x$ is contained in the image of $\varphi$. 
To do this, consider the discrete \donotbreakdash{$\Q$}category $(\{x\},\delta)$ with $\delta(x,x)=1_{|x|}$. Let $(P(\{x\},\delta),\pi)$ be the free cocompletion of $(\{x\},\delta)$, and let $\psi\colon (P(\{x\},\delta),\pi)\to (X,\alpha)$ be the extension of the embedding of $x$ into $X_0$ to a cocontinuous functor --- i.e.,~$\psi$ is given by (cf.\ \cite[Chap.~1.~4.12]{Manes}):
\[\psi(\objectQuantaloida,\Qpresheaveg) =\sup\nolimits_{X}(\alpha(\underline{\phantom{x}},x)\circ \Qpresheaveg(x)),\qquad (\objectQuantaloida,\Qpresheaveg) \in 
P(\{x\},\delta)_0.\]
Since $\chi$ is cocontinuous, we compute:
\begin{align*} \chi(\psi(\objectQuantaloida,\Qpresheaveg))&=\sup\nolimits_{\mathds{R}(\R^{\objectQuantaloidc})}\bigl(\tbigvee_{z\in X_0} \varrho(\underline{\phantom{x}},\chi(z))\circ \alpha(z,x)\circ\Qpresheaveg(x)\bigr)=\sup\nolimits_{\mathds{R}(\R^{\objectQuantaloidc})}(\varrho(\underline{\phantom{x}},\chi(x))\circ\Qpresheaveg(x))\\
&= \Qpresheaveg( x)\searrow\bigl(\tbigwedge_{{\arrowQuantaloidf} \in \mathds{R}(\R_0^{\objectQuantaloidc})} ({\arrowQuantaloidf} \swarrow \chi(x))\searrow {\arrowQuantaloidf}\bigr)=\Qpresheaveg(x)\searrow \chi(x)\\
&=\Qpresheaveg(x)\searrow\tau(|x|,r)= \tau(\objectQuantaloida,\objectQuantaloidc).
\end{align*}
Hence we conclude $\chi\circ\psi= \true_{\objectQuantaloidc}\circ |\phantom{x}|$, and by the universal property of the pullback square (\ref{n4.3AA}), there exists a (unique) cocontinuous functor $\vartheta\colon (P(\{x\},\delta),\pi)\to (Y,\beta)$ such that $\psi= \varphi\circ \vartheta$. In the special case of the contravariant \donotbreakdash{$\Q$}presheaf $(|x|, \Qpresheaveg_0)$ with $\Qpresheaveg_0(x)=\alpha(x,x)$ we obtain:
\[\psi(|x|,\Qpresheaveg_0)= \sup\nolimits_{X}(\alpha(\underline{\phantom{x}},x)\circ\alpha(x,x))=x=\varphi(\vartheta(|x|,\Qpresheaveg_0)),\]
--- i.e.,~$x$ is contained in the image $\varphi(Y)$.
\\[2pt]
(2) Finally, for every $x\in X_0$ consider the contravariant \donotbreakdash{$\Q$}presheaf $(\objectQuantaloidc,\Qpresheaveg_x)$ on $(X,\alpha)$ defined by:
\[\Qpresheaveg_x(z)= \alpha(z,x)\circ \chi(x), \qquad z\in X_0.
\]
 Then we observe $\bigl(\mathds{P}(\chi)(\objectQuantaloidc,\Qpresheaveg_x)\bigr)({\arrowQuantaloidf})=\varrho({\arrowQuantaloidf}, \chi(x))\circ \chi(x)$ for all ${\arrowQuantaloidf} \in \mathds{R}(\R^{\objectQuantaloidc}_0)$. Since $\chi$ is cocontinuous and right-sided, we obtain:
\begin{align*}\chi(\sup\nolimits_{X}(\objectQuantaloidc,\Qpresheaveg_x))&=\sup\nolimits_{\mathds{R}(\R^{\objectQuantaloidc})}(\mathds{P}(\chi)(\objectQuantaloidc,\Qpresheaveg_x))=\chi(x) \searrow \bigl(\tbigwedge_{{\arrowQuantaloidf} \in \mathds{R}(\R_0^{\objectQuantaloidc})}(\varrho({\arrowQuantaloidf}, \chi(x))\searrow {\arrowQuantaloidf})\bigr) \\
&=\chi(x)\searrow \chi(x)= \tau(\objectQuantaloidc,\objectQuantaloidc).
\end{align*}
Now, from part (1), we conclude that $\sup\nolimits_{X}(\objectQuantaloidc,\Qpresheaveg_x)$ is contained in the image of $\varphi$ --- i.e.,~there exists an element $y_x\in Y_0$ such that $|y_x|=\objectQuantaloidc$ and $\sup\nolimits_{X}(\objectQuantaloidc,\Qpresheaveg_x)=\varphi(y_x)$. Hence, by property \ref{P1}, we obtain for all $z\in X_0$:
\begin{align*}\chi(z)&=\tbigvee_{x\in X_0} \alpha(z,x)\circ \chi(x)\le \tbigvee_{x\in X_0}\alpha(z, \sup\nolimits_{X}(\objectQuantaloidc,\Qpresheaveg_x))\\
&\le\tbigvee_{x\in X_0}\alpha(z,\varphi(y_x))\circ \tau(\objectQuantaloidc,\objectQuantaloidc)\le \tbigvee_{y\in Y_0} \alpha(z,\varphi(y)) \circ \tau(|y|,r).\end{align*}
Thus the assertion is verified.
\end{proof}

We can summarize the results of Theorem~\ref{them4.7AA} and Theorem~\ref{them4.8AA} as follows: 
 Let $\Q$ be a \donotbreakdash{$\objectQuantaloidc$}stable, small quantaloid.  
Then the pair $((\mathds{R}(\R^{\objectQuantaloidc}),\varrho), \true_{\objectQuantaloidc})$ 
satisfies the 
axioms (WS1) and (WS2) of a weak subobject classifier, as defined in \cite[Def.~4.1\,(b)]{ho25}. Therefore, we refer to
 \[((\mathds{R}(\R^{\objectQuantaloidc}),\varrho), \true_{\objectQuantaloidc})\] 
as the \emph{weak subobject classifier of type $\objectQuantaloidc$} in the category $\cat{Cat}_{sc}(\Q)$. In this context we recall that a weak subobject classifier is unique up to an isomorphism. 

With regard to axiom (WS2) (cf.\ Theorem~\ref{them4.8AA}) a monomorphism $\varphi\colon(Y,\beta) \to (X,\alpha)$ is called \donotbreakdash{$\objectQuantaloidc$}classifiable if the diagram (\ref{n4.3AA}) forms a pullback square in the category $\cat{Cat}_{sc}$. In this case, the cocontinuous functor $\chi$ in diagram (\ref{n4.3AA}) is uniquely determined by $\varphi$ and is called the \emph{characteristic morphism of $\varphi$}.  Accordingly, we denote it by  $\chi_{\varphi}$.

Finally, axiom (WS1) (cf.\ Theorem~\ref{them4.7AA}) ensures that global points in $\cat{Cat}_{sc}(\Q)$ are always \donotbreakdash{$\objectQuantaloidc$}classifiable. 
\begin{remark}\label{rem4.9AA} (1) Let $\Q$ be a quantaloid with a single object $\objectQuantaloide$ --- i.e.,~$\Q_0=\{\objectQuantaloide\}$ and $\hom (\objectQuantaloide,\objectQuantaloide)=(\alg{{Q}},\ast,1_{\objectQuantaloide})$ is a unital quantale.
Then $\Q$ is trivially \donotbreakdash{$\objectQuantaloide$}stable. The underlying set 
$\mathds{R}(\R_0^{\objectQuantaloidc})$ of the weak subobject classifier in $\cat{Cat}_{sc}(\Q)$ coincides with the subquantale $\mathds{R}(\alg{{Q}})$ of all right-sided elements of the quantale $\alg{{Q}}$.
 Since every separated and cocomplete \donotbreakdash{$\Q$}category is a right \donotbreakdash{$\alg{{Q}}$}module in $\cat{Sup}$ (cf.\ \cite[Sec.~3.3.3]{EGHK}), the underlying order of $(\mathds{R}(\R^{\objectQuantaloidc}),\varrho)$ is the dual order of $\mathds{R}(\alg{{Q}})$, and 
the right action on $\mathds{R}(\R^{\objectQuantaloidc})$ is given by (cf.\ \cite[Proof (a) of Thm.~3.3.22]{EGHK})
\[ {\arrowQuantaloidf}\boxdot \varkappa= \sup\nolimits_{\mathds{R}(\R_0^{\objectQuantaloidc})}(\varrho(\underline{\phantom{x}}, {\arrowQuantaloidf})\ast \varkappa)= \varkappa \searrow {\arrowQuantaloidf},\qquad {\arrowQuantaloidf}\in \mathds{R}(\R_0^{\objectQuantaloidc}),\,\varkappa\in \alg{{Q}}=\hom (\objectQuantaloide,\objectQuantaloide).\]
Thus, we recover the weak subobject classifier of right \donotbreakdash{$\alg{{Q}}$}modules in $\cat{Sup}$ 
(cf.\ \cite[Exam.~3\,(a), Thm.~4.2]{ho25}).
\\[2pt]
(2) Let $\Q$ be an \donotbreakdash{$\objectQuantaloidc$}stable and arbitrary quantaloid. Since    separated and cocomplete \donotbreakdash{$\Q$}categories are right \donotbreakdash{$\Q$}modules   
(cf.\ \cite{Rosenthal96,Stubbe06} and \cite[Def.~5.5 and Rem.~5.6]{ho14}), it follows from (1)  that 
Theorems~\ref{them4.7AA} and \ref{them4.8AA} provide a significant generalization of \cite[Thm.~4.2]{ho25}.
 \\[2pt]
(3)  
Let $\Q_{\boldsymbol{2}}$ be the quantaloid induced by the quantization  $\alg{{Q}}_{\boldsymbol{2}}$ of $\boldsymbol{2}$ (cf.\ Example~\ref{exam1.8}). 
Unlike in case (1), the set of objects of $\Q_{\boldsymbol{2}}$ consists of three distinct elements: $\bot$, $b$ and $\top$. 
The quantaloid $\Q_{\boldsymbol{2}}$ is both \donotbreakdash{$b$}stable and \donotbreakdash{$\top$}stable. Since $1_b=\tau(b,b)$ and $1_{\top}=\tau(\top,\top)$, the weak subobject classifiers of types $b$ and $\top$ have the form:
\[({\R}^{b},\varrho)= \bigl(\hom (b,b) \sqcup \hom (b,\top),\varrho\bigr)\quad \text{and}\quad ({\R}^{\top},\varrho)= \bigl(\hom (\top,\top) \sqcup \hom (\top,b),\varrho\bigr),\]
  where $\sqcup$ denotes the disjoint union. The right actions $\boxdot_{\objectQuantaloida}$ in separated and cocomplete \donotbreakdash{$\Q$}categories $(X,\alpha)$ are given by:
 \[x\boxdot_{\objectQuantaloida} \varkappa=\sup\nolimits_{X}(\alpha(\underline{\phantom{x}},x)\circ \varkappa), \qquad x\in X_0,\,\objectQuantaloida\in \Q_0,\,\varkappa\in \hom (\objectQuantaloida,|x|).\]
 It is therefore straightforward to construct the \donotbreakdash{$\Q_{\boldsymbol{2}}$}enriched functors $\Q_{\boldsymbol{2}}^{op} \to\cat{Sup}$ which express the module character of the related weak subobject classifiers. 
\end{remark} 

Since both $\true_{\objectQuantaloidc}$ and the type arrow $|\phantom{x}|\colon (X,\alpha)\to (\Q,\tau)$ of a \donotbreakdash{$\Q$}category are cocontinuous, 
their composition $\true_{\objectQuantaloidc}\circ |\phantom{x}|$ is also cocontinuous. This composition defines the largest contravariant  
 \donotbreakdash{$\Q$}presheaf of type $\objectQuantaloidc$ on $(X,\alpha)$. 
 In particular, $\true_{\objectQuantaloidc}(|x|)$ is right-sided for all $x\in X_0$. 
It is not difficult to show that diagram (\ref{n4.3AA}) is a pullback square
 if and only if $\varphi\colon(Y,\beta)\to (X,\alpha)$ is the equalizer of $(X,\alpha)\two^\chi_{\true_{\objectQuantaloidc}\circ |\phantom{x}|} \bigl(\mathds{R}(\R^{\objectQuantaloidc}),\varrho\bigr)
$. 
Hence, \donotbreakdash{$\objectQuantaloidc$}classifiable 
subobjects of $\cat{Cat}_{sc}(\Q)$ are always regular.

Let $(\mathds{R}(\R^{\objectQuantaloidc}),\varrho)\times(\mathds{R}(\R^{\objectQuantaloidc}),\varrho)$ be the binary product of $(\mathds{R}(\R^{\objectQuantaloidc}),\varrho)$ in $\cat{Cat}_{sc}(\Q)$. Its underlying set and the hom-arrow-assignment are given by:
\begin{align*}&S=\set{(\lambda_1,\lambda_2)\in \mathds{R}(\R^{\objectQuantaloidc}_0)\times  \mathds{R}(\R^{\objectQuantaloidc}_0)\mid |\lambda_1|=|\lambda_2|} 
\\ 
&(\varrho\times \varrho)((\lambda_1,\lambda_2),(\overline{\lambda}_1,\overline{\lambda}_2))=(\lambda_1\swarrow \overline{\lambda}_1)\wedge (\lambda_2\swarrow \overline{\lambda}_2),\qquad (\lambda_1,\lambda_2), \, (\overline{\lambda}_1,\overline{\lambda}_2)\in S.
\end{align*}
Let $\pi_i$ (for $i\in\set{1,2}$) be the respective projections. 
Then the left adjoint functor of the \donotbreakdash{$\Q$}enriched Yoneda embedding of the product is
 determined by (cf.\ \cite[Chap.~2, 1.11~Prop.]{Manes}):
\[ \sup\nolimits_{\mathds{R}(\R^{\objectQuantaloidc})\times\mathds{R}(R^{\objectQuantaloidc})}(\objectQuantaloida,\Qpresheaveg)=\bigl(\sup\nolimits_{\mathds{R}(\R^{\objectQuantaloidc})}\bigl(\mathds{P}(\pi_1)(\objectQuantaloida,\Qpresheaveg)\bigr),\sup\nolimits_{\mathds{R}(\R^{\objectQuantaloidc})}\bigl(\mathds{P}(\pi_2)(\objectQuantaloida,\Qpresheaveg)\bigr)\bigr).
  \]
Let $\langle \true_{\objectQuantaloidc},\true_{\objectQuantaloidc}\rangle$ be the global point $(\Q,\tau) \to (\mathds{R}(\R^{\objectQuantaloidc}),\varrho)\times(\mathds{R}(R^{\objectQuantaloidc}),\varrho)$, which is cocontinuous. Its corresponding characteristic morphism
$(\mathds{R}(\R^{\objectQuantaloidc}),\varrho)\times(\mathds{R}(\R^{\objectQuantaloidc}),\varrho)\to^{\chi_{\wedge}} (\mathds{R}(\R^{\objectQuantaloidc}),\varrho)$
has the form (cf.\ (\ref{n4.4AA})):
\[ \chi_{\wedge}(\lambda_1,\lambda_2)=\tbigvee_{b \in \Q_0} \bigl( (\lambda_1\swarrow \tau(b,\objectQuantaloidc))\wedge(\lambda_2\swarrow \tau(b,\objectQuantaloidc))\bigr)\circ \tau(b,\objectQuantaloidc)=\lambda_1 \wedge \lambda_2.\]

 In analogy with topos theory (cf. \cite{ho25}) the binary intersection of \donotbreakdash{$\objectQuantaloidc$}classifiable subobject is again \donotbreakdash{$\objectQuantaloidc$}classifiable. Since this construction can naturally be extended to multiple intersections, it follows that every subobject in $\cat{Cat}_{sc}(\Q)$ admits an \donotbreakdash{$\objectQuantaloidc$}classifiable hull.
\section{The category $\Q$-\cat{Set}}
\label{section5}
The category \donotbreakdash{$\Q$}\cat{Set} has \donotbreakdash{$\Q$}categories as objects and \emph{left adjoint distributors} as morphisms.

\begin{remarks}\label{remark4.1}
 (1) The \donotbreakdash{$\Q$}category $(\Q,\tau)$, constructed in Example~\ref{exam2.1}, 
 serves as the terminal object in \donotbreakdash{$\Q$}\cat{Set}.
  Indeed, for every \donotbreakdash{$\Q$}category $(X,\alpha)$, the type map $|{\phantom{x}}|\colon {X_0\to \Q_0}$ induces a left adjoint distributor $\Phi\colon (X,\alpha)\circlearrow (\Q,\tau)$ defined by $\Phi({\objectQuantaloida},x)= \tau({\objectQuantaloida},|x|)$ for all $({\objectQuantaloida},x)\in {\Q_0\times X_0}$. To verify the uniqueness of $\Phi$, consider another left adjoint distributor $\Phi^{\prime}\colon (X,\alpha)\circlearrow(\Q,\tau)$ with the corresponding right adjoint distributor $\Psi^{\prime}$.
 By axiom \ref{D1} we have $\Phi^{\prime}({\objectQuantaloida},x)\in \hom (|x|,{\objectQuantaloida})$ and $\Psi^{\prime}(x,\objectQuantaloida)\in 
 \hom ({\objectQuantaloida},|x|)$, which implies $\Phi^{\prime}({\objectQuantaloida},x)\le \tau({\objectQuantaloida},|x|)$ and $\Psi^{\prime}(x,\objectQuantaloida)\le \tau(|x|,{\objectQuantaloida})$ for all $({\objectQuantaloida},x)\in \Q_0\times X$. Hence, by Proposition~\ref{prop2.1} it follows that $\Phi^{\prime}=\Phi$.
\\[2pt]
 (2) Let ${\Phi\colon (X,\alpha)\circlearrow(Y,\beta)}$ and $\Psi\colon(Y,\beta)\circlearrow(X,\alpha)$ be a pair of adjoint distributors --- i.e.,~$\Phi \dashv \Psi$, and let $x\in X_0$.
Then the triple $\mu_x=(\sigma_x, |x|, \tau_x)$ is a presingleton of $(Y,\beta)$, where
$\sigma_x(y)=\Psi(x,y)\text{ and } \tau_x(y)=\Phi(y,x)$ for each $y\in Y_0$.
Moreover, it follows immediately that
\stepcounter{num}
\begin{equation} \label{newn4.1}
\alpha(x_1,x_2)\le \widehat{\beta}(\mu_{x_1},\mu_{x_2}),\qquad x_1,x_2\in X_0.
\end{equation}
\end{remarks}

\begin{theorem} \label{them4.2} A left adjoint distributor $\Phi\colon (X,\alpha)\circlearrow(Y,\beta)
$ is an epimorphism in \donotbreakdash{$\Q$}\cat{Set} if and only if its right adjoint distributor $\Psi$ satisfies the condition
\stepcounter{num}
\begin{equation} \label{n4.1}
\beta(y,y)\le\tbigvee_{x\in X_0} \Phi(y,x)\circ \Psi(x,y),\qquad y \in Y.
\end{equation}
\end{theorem}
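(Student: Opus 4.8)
The plan is to characterize epimorphisms in $\Q$-$\cat{Set}$ via a cancellation test against the terminal object, or more precisely against a suitably chosen pair of parallel morphisms, and to translate this into the stated inequality on $\Psi$. First I would recall the composition law in $\Q$-$\cat{Set}$: if $\Phi\dashv\Psi$ and $\Phi'\dashv\Psi'$, then $\Phi'\otimes\Phi\dashv\Psi\otimes\Psi'$, and $\Phi$ is an epimorphism exactly when $\Phi'\otimes\Phi=\Phi''\otimes\Phi$ forces $\Phi'=\Phi''$ for all left adjoint distributors out of $(Y,\beta)$. By Proposition~\ref{prop2.1}, equality of left adjoint distributors is equivalent to equality of the corresponding right adjoints, so the cancellation condition can equivalently be checked on the right adjoint side.

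For the "only if" direction I would produce two distinct morphisms out of $(Y,\beta)$ that are equalized by $\Phi$ whenever \eqref{n4.1} fails. The natural target is $(P(Y,\beta),\pi)$ or, more economically, a cofree/quotient construction: consider the left adjoint distributor $\beta^{\sharp}\colon (Y,\beta)\circlearrow(Y',\beta')$ where $(Y',\beta')$ is $(Y,\beta)$ with its hom-assignment replaced by $\beta'(y_1,y_2)=\bigvee_{x}\Phi(y_1,x)\circ\Psi(x,y_2)$ — one checks this satisfies \ref{C1}--\ref{C3} using \ref{D2} and \ref{D3}, and that it is always $\le\beta$. Then both the identity distributor $\beta\colon(Y,\beta)\circlearrow(Y,\beta)$ (composed into $(Y',\beta')$) and the distributor induced by $\beta'$ agree after precomposition with $\Phi$, because $(\beta'\otimes\Phi)(y,x)=\bigvee_{y_1}\beta'(y,y_1)\circ\Phi(y_1,x)=\Phi(y,x)$ by \eqref{n2.2CC}-type manipulations and the defining property of the adjunction. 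If \eqref{n4.1} fails, $\beta'\neq\beta$, giving two distinct morphisms equalized by $\Phi$, so $\Phi$ is not epi.

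For the "if" direction, suppose \eqref{n4.1} holds and $\Phi'\otimes\Phi=\Phi''\otimes\Phi$ for left adjoint distributors $\Phi',\Phi''\colon(Y,\beta)\circlearrow(Z,\gamma)$. I would compute directly, for each $z\in Z_0$ and $y\in Y_0$,
\[
\Phi'(z,y)=\Phi'(z,y)\circ\beta(y,y)\le\tbigvee_{x\in X_0}\Phi'(z,y)\circ\Phi(y,x)\circ\Psi(x,y)\le\tbigvee_{x\in X_0}(\Phi'\otimes\Phi)(z,x)\circ\Psi(x,y),
\]
where the first step uses \eqref{n2.2CC} and the second uses the hypothesis \eqref{n4.1} and monotonicity of composition; then replace $(\Phi'\otimes\Phi)$ by $(\Phi''\otimes\Phi)$ and run the chain backwards, using \ref{D2} for $\Phi''$ to absorb $\Psi$ via the identity $\Phi''(z,y_1)\circ\Phi(y_1,x)\circ\Psi(x,y)\le\Phi''(z,y_1)\circ\beta(y_1,y)\le\Phi''(z,y)$. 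This yields $\Phi'\le\Phi''$, and by symmetry $\Phi''\le\Phi'$, hence equality.

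**Main obstacle.** The delicate point is the "only if" direction: one must exhibit an honest morphism in $\Q$-$\cat{Set}$ — i.e.\ a \emph{left adjoint} distributor — out of $(Y,\beta)$ that detects the failure of \eqref{n4.1}, and verifying that the candidate $(Y',\beta')$ with its associated distributor genuinely forms a $\Q$-category with an adjoint pair requires care with the residuation identities \eqref{n2.2} and axioms \ref{D1}--\ref{D3}. The "if" direction is essentially a two-line residuation computation once the epimorphism test is phrased on the right adjoint side via Proposition~\ref{prop2.1}.
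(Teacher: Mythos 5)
Your ``if'' direction is correct and is essentially the paper's sufficiency argument verbatim: expand $\Gamma_1(z,y)=\Gamma_1(z,y)\circ\beta(y,y)$, insert \eqref{n4.1}, swap $\Gamma_1\otimes\Phi$ for $\Gamma_2\otimes\Phi$, and absorb $\Psi$ back using \ref{D3} and \ref{D2}.

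The ``only if'' direction, however, has a genuine gap at its central construction. You propose to separate two morphisms by forming $(Y',\beta')$ with $\beta'=\Phi\otimes\Psi$ and assert that \ref{C1}--\ref{C3} follow from \ref{D2} and \ref{D3}. But axiom \ref{C3} for $\beta'$ is \emph{equivalent} to condition \eqref{n4.1}: one direction is immediate since $1_{|y|}\le\beta(y,y)$, and conversely, if $1_{|y|}\le\beta'(y,y)$ then
\[
\beta(y,y)=\beta(y,y)\circ 1_{|y|}\le \beta(y,y)\circ\beta'(y,y)\le(\beta\otimes\Phi\otimes\Psi)(y,y)\le(\Phi\otimes\Psi)(y,y)=\beta'(y,y),
\]
using \ref{D2} for $\Phi$. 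So $(Y',\beta')$ is a \donotbreakdash{$\Q$}category precisely when \eqref{n4.1} already holds --- i.e.\ the object you need in order to witness the failure of \eqref{n4.1} does not exist exactly when \eqref{n4.1} fails. (The adjunction only gives the unit $1_{|x|}\le(\Psi\otimes\Phi)(x,x)$ on the $X$ side; there is no corresponding lower bound on $(\Phi\otimes\Psi)(y,y)$.) In addition, even granting the category, you never verify that your two comparison distributors into $(Y',\beta')$ are \emph{left adjoint} --- and the identity-like candidate $\beta\colon(Y,\beta)\circlearrow(Y',\beta')$ cannot have counit below $\beta'$ unless $\beta=\beta'$. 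The paper circumvents both problems by working instead in the presingleton space $(\widehat{Y},\widehat{\beta})$ and the always-legitimate, separated \donotbreakdash{$\Q$}category $(\Q(\widehat{Y},\widehat{\beta}),\upsilon)$ of covariant presheaves, where the two separating morphisms $\Gamma$ and $\Theta$ are of representable type (hence automatically left adjoint), and the condition \eqref{n4.1} is extracted by evaluating the resulting identity $\widehat{\Qpresheavef}_\mu=\widehat{\Qpresheaveg}_\mu$ at the presingletons $\widetilde{y}$. You would need to replace your construction by something of this kind (or otherwise repair \ref{C3}) for the necessity argument to go through.
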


\begin{comment} Since axioms \ref{D2} and \ref{D3} hold, we infer from (\ref{n2.1C4}) that the condition (\ref{n4.1}) is equivalent to the following property:
\begin{equation} \label{n4.1'}\tag{$5.2^{\prime}$}
\beta(y_1,y_2)= \tbigvee_{x\in X_0} \Phi(y_1,x)\circ \Psi(x,y_2)= (\Phi\otimes \Psi)(y_1,y_2),\qquad y_1,y_2\in Y.
\end{equation}
\end{comment}

\begin{proof}  (Necessity): Let  $\Phi\colon (X,\alpha)\circlearrow(Y,\beta)
$ be an epimorphism in \donotbreakdash{$\Q$}\cat{Set}. 
We consider the \donotbreakdash{$\Q$}category $(\Q(\widehat{Y},\widehat{\beta}),\upsilon)$ of covariant \donotbreakdash{$\Q$}pre\-sheaves 
on $(\widehat{Y},\widehat{\beta})$ and use the notation of Remarks~\ref{remark4.1}\,(2).  
  Then for each presingleton $\mu=({\Qpresheavef},{\objectQuantaloida},{\Qpresheaveg})$ of $(Y,\beta)$, we define the covariant \donotbreakdash{$\Q$}pre\-sheaves
$\widehat{\Qpresheavef}_\mu$ and $\widehat{\Qpresheaveg}_\mu$ on $(\widehat{Y},\widehat{\beta})$ by
\[\widehat{\Qpresheavef}_\mu(\mu^\prime)=\widehat{\beta}(\mu,\mu^\prime)\quad\text{and}\quad\widehat{\Qpresheaveg}_\mu(\mu^\prime)=\tbigvee_{x\in X_0} \widehat{\beta}(\mu, \mu_x) \circ \widehat{\beta} (\mu_x,\mu^\prime),\qquad \mu^\prime \in \widehat{Y}_0,\]
and observe that for each $\mu\in \widehat{Y}_0$ and $x\in  X_0$ the following properties hold: 
\stepcounter{num}
\begin{align}\label{n4.2}
&\widehat{\Qpresheaveg}_{\mu}(\mu_x)=\widehat{\beta}(\mu,\mu_x)=\widehat{\Qpresheavef}_\mu(\mu_x), \quad\widehat{\Qpresheaveg}_{\mu_x}(\mu)=\widehat{\beta}(\mu_x,\mu)=\widehat{\Qpresheavef}_{\mu_x}(\mu), \\
 &1_{|\mu_x|} \le \widehat{\Qpresheavef}_{\mu_x}(\mu_x)= \widehat{\Qpresheaveg}_{\mu_x}(\mu_x).\notag
\end{align}
Using the isomorphism $\Xi$ in (\ref{n3.1AA}), we obtain:
\stepcounter{num}
\begin{align} 
 \label{n4.2AA} 
(\Xi \otimes \Phi)(\mu,x)&
=\tbigvee_{y\in Y_0} {\Qpresheavef}(y)\circ \tau_x(y)
=\widehat{\beta}(\mu,\mu_x)=\widehat{\Qpresheavef}_\mu(\mu_x)=\widehat{\Qpresheaveg}_{\mu}(\mu_x).
\end{align}
Now, define the left adjoint distributors $\Gamma,\Theta\colon(\widehat{Y},\widehat{\beta}) \circlearrow (\Q(\widehat{Y},\widehat{\beta}),\upsilon)$ by
\[\Gamma(({\objectQuantaloidb},{\Qpresheaveg}),\mu)=\upsilon \bigl(({\objectQuantaloidb},{\Qpresheaveg}),(|\mu|,\widehat{\Qpresheavef}_\mu)\bigr)\quad \text{and}\quad \Theta(({\objectQuantaloidb},{\Qpresheaveg}),\mu)=\upsilon(({\objectQuantaloidb},{\Qpresheaveg}), (|\mu|,\widehat{\Qpresheaveg}_\mu)).
\]
From equations (\ref{n4.2}) and (\ref{n4.2AA}) we deduce:
\begin{align*}
 (\Gamma\otimes \Xi \otimes \Phi)(({\objectQuantaloidb},{\Qpresheaveg}),x)&
 =\tbigvee_{\mu\in \widehat{Y}_0}\upsilon(({\objectQuantaloidb},{\Qpresheaveg}), (|\mu|, \widehat{\Qpresheavef}_\mu))\circ \widehat{\Qpresheavef}_\mu(\mu_x)={\Qpresheaveg}(\mu_x)\\
 &=\upsilon(({\objectQuantaloidb},{\Qpresheaveg}),  (|\mu_{x}|,\widehat{\Qpresheavef}_{\mu_x})=\upsilon(({\objectQuantaloidb},{\Qpresheaveg}),  (|\mu_{x}|,\widehat{\Qpresheaveg}_{\mu_x})\\
 &=\tbigvee_{\mu\in \widehat{Y}_0}\upsilon(({\objectQuantaloidb},{\Qpresheaveg}), (|\mu|, \widehat{\Qpresheaveg}_\mu))\circ \widehat{\Qpresheaveg}_\mu(\mu_x)
 =(\Theta\otimes \Xi \otimes \Phi)(({\objectQuantaloidb},{\Qpresheaveg}),x).
 \end{align*}
We have shown $\Gamma\otimes \Xi \otimes \Phi=\Theta\otimes \Xi \otimes \Phi$.
Since $\Phi$ is an epimorphism in \donotbreakdash{$\Q$}\cat{Set}, it follows that $\Gamma\otimes \Xi=\Theta \otimes \Xi$. And because $\Xi$ is an isomorphism, we conclude $\Gamma=\Theta$. 
Now, using the fact that the \donotbreakdash{$\Q$}category $(\Q(\widehat{Y},\widehat{\beta}),\upsilon)$ is separated (or skeletal), 
we deduce that $\widehat{\Qpresheavef}_\mu=\widehat{\Qpresheaveg}_\mu$ for all $\mu\in \widehat{Y}_0$. 
Referring to Example~\ref{example3.4} we apply (\ref{n3.3}) and compute
\begin{align*} \beta(y,y)&=\widehat{\beta}(\widetilde{y},\widetilde{y})=\widehat{\Qpresheavef}_{\widetilde{y}}(\widetilde{y})=\widehat{\Qpresheaveg}_{\widetilde{y}}(\widetilde{y})= \tbigvee_{x\in X_0} \widehat{\beta}(\widetilde{y},\mu_x)\circ \widehat{\beta}(\mu_x,\widetilde{y})\\
&=\tbigvee_{x\in X_0} \tau_x(y)\circ \sigma_x(y)=\tbigvee_{x\in X_0} \Phi(y,x)\circ \Psi(x,y),
\end{align*}
 for each $y\in Y_0$ --- i.e.,~condition (\ref{n4.1}) is satisfied.
\\[2pt]
\noindent
(Sufficiency):
Let  $\Gamma_i\colon (Y,\beta)\circlearrow(Z,\gamma)$ be left adjoint distributors (for $i\in\set{1,2}$), and suppose $\Gamma_1 \otimes \Phi=\Gamma_2 \otimes \Phi$. Then, we use (\ref{n2.2CC}), \ref{D2}, \ref{D3} and (\ref{n4.1}), and compute: 
\begin{align*}\Gamma_1(z,y)&= \Gamma_1(z,y)\circ \beta(y,y)\le\tbigvee_{x\in X_0} \Gamma_1(z,y)\circ \Phi(y,x)\circ \Psi(x,y)\\
&\le\tbigvee_{x\in X_0} (\Gamma_1\otimes \Phi)(z,x)\circ \Psi(x,y)=\tbigvee_{x\in X_0} (\Gamma_2\otimes \Phi)(z,x)\circ \Psi(x,y)\\
&
\le\tbigvee_{y^\prime\in Y_0} \Gamma_2(z,y^\prime)\circ \beta(y^\prime,y)\le\Gamma_2(z,y)
\end{align*}
for each $y\in Y$ and $z\in Z$. By symmetry, we also obtain $\Gamma_2(z,y)\le\Gamma_1(z,y)$, and hence $\Gamma_1=\Gamma_2$. Therefore $\Phi$ is an epimorphism in \donotbreakdash{$\Q$}\cat{Set}.
\end{proof}
  
\begin{comment} The proof of Theorem~\ref{them4.2} is fundamentally different from the proof of \cite[Prop.~3.23]{PuDexue} by Q. Pu and D. Zhang, which applies to the special case of commutative, unital, and divisible quantales. It is also distinct from the proof in the context of frame-valued sets (\cite[Prop.~2.8.7]{Borceux3}).
\end{comment}


\begin{theorem}\label{them4.3} A left adjoint distributor $\Phi\colon (X,\alpha)\circlearrow(Y,\beta)
$ is an extremal monomorphism in \donotbreakdash{$\Q$}\cat{Set} if and only if its right adjoint distributor $\Psi$ satisfies the condition\textup:
\stepcounter{num}
\begin{equation} \label{n4.3}
\Psi(x_1,y)\circ \Phi(y,x_2)\le \alpha (x_1,x_2),\qquad x_1,x_2 \in X_0,\,y\in Y_0 
\end{equation}
\end{theorem}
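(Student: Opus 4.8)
The plan is to first restate condition~(\ref{n4.3}) more conceptually. Since $\Phi\dashv\Psi$ always forces $\alpha(x_1,x_2)\le(\Psi\otimes\Phi)(x_1,x_2)$, and since the join $(\Psi\otimes\Phi)(x_1,x_2)=\tbigvee_{y\in Y_0}\Psi(x_1,y)\circ\Phi(y,x_2)$ lies below $\alpha(x_1,x_2)$ exactly when each of its terms does, condition~(\ref{n4.3}) is equivalent to the single equality $\Psi\otimes\Phi=\alpha$ --- that is, $\Psi$ splits $\Phi$ on the left in the quantaloid of distributors. Recalling that the identity of $(X,\alpha)$ in \donotbreakdash{$\Q$}\cat{Set} is $\alpha$ itself and that composition of distributors is associative, the plan is to prove that \emph{$\Phi$ is an extremal monomorphism in \donotbreakdash{$\Q$}\cat{Set} if and only if $\Psi\otimes\Phi=\alpha$}, treating the two directions separately.

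\emph{Sufficiency.} Assume $\Psi\otimes\Phi=\alpha$. Then $\Phi$ is a monomorphism, since $\Phi\otimes\Delta_1=\Phi\otimes\Delta_2$ gives $\Delta_1=(\Psi\otimes\Phi)\otimes\Delta_1=(\Psi\otimes\Phi)\otimes\Delta_2=\Delta_2$. For the extremal property, suppose $\Phi=G\otimes E$ with $E\colon(X,\alpha)\circlearrow(Z,\gamma)$ an epimorphism of \donotbreakdash{$\Q$}\cat{Set} and $G\colon(Z,\gamma)\circlearrow(Y,\beta)$ a morphism; write $E\dashv E^{*}$ and $G\dashv G^{*}$. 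The right adjoint of $G\otimes E$ is $E^{*}\otimes G^{*}$, so $\Psi=E^{*}\otimes G^{*}$, and using the units $\gamma\le G^{*}\otimes G$ and $\alpha\le E^{*}\otimes E$ we get
\[\alpha=\Psi\otimes\Phi=E^{*}\otimes(G^{*}\otimes G)\otimes E\ \ge\ E^{*}\otimes\gamma\otimes E=E^{*}\otimes E\ \ge\ \alpha,\]
hence $E^{*}\otimes E=\alpha$. Since $E$ is an epimorphism, Theorem~\ref{them4.2}, whose condition~(\ref{n4.1}) is readily upgraded to the equality $\Phi\otimes\Psi=\beta$ (use \ref{D2} and (\ref{n2.1C4})), yields $E\otimes E^{*}=\gamma$. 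The two equalities $E^{*}\otimes E=\alpha$ and $E\otimes E^{*}=\gamma$ exhibit $E^{*}$ as a two-sided inverse of $E$ in the quantaloid of distributors; in particular they witness $E^{*}\dashv E$, so $E^{*}$ is itself a left adjoint distributor, hence a morphism of \donotbreakdash{$\Q$}\cat{Set}, and $E$ is an isomorphism. Therefore $\Phi$ is an extremal monomorphism.

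\emph{Necessity.} Assume $\Phi$ is an extremal monomorphism and put $\rho:=\Psi\otimes\Phi$, so $\alpha\le\rho$. Using \ref{C1}, the counit of $\Phi\dashv\Psi$ together with \ref{D2} for \ref{C2}, and $\alpha\le\rho$ for \ref{C3}, one checks that the pair $(X,\rho)$ on the same typed set $X$ is a \donotbreakdash{$\Q$}category; in particular (\ref{n2.1C4}) holds for $\rho$, so each $\rho(x,x)$ is idempotent. Define $J\colon(X,\alpha)\circlearrow(X,\rho)$ by $J(x_1,x_2)=\rho(x_1,x_2)$ and $\Phi''\colon(X,\rho)\circlearrow(Y,\beta)$ by $\Phi''(y,x)=\Phi(y,x)$. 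Routine verifications from \ref{D2} and the counit of $\Phi\dashv\Psi$ show that $J$ is a left adjoint distributor with right adjoint $J'(x_1,x_2)=\rho(x_1,x_2)$, that $\Phi''$ is a left adjoint distributor with right adjoint $\Psi''(x,y)=\Psi(x,y)$, and that $\Phi''\otimes J=\Phi$. Since $\tbigvee_{x\in X_0}J(z,x)\circ J'(x,z)=\tbigvee_{x\in X_0}\rho(z,x)\circ\rho(x,z)=\rho(z,z)$, Theorem~\ref{them4.2} shows that $J$ is an epimorphism of \donotbreakdash{$\Q$}\cat{Set}. As $\Phi=\Phi''\otimes J$ is an extremal monomorphism and $J$ is an epimorphism, $J$ must be an isomorphism; its two-sided inverse then coincides with its right adjoint $J'$ (because $J\otimes J'=\rho$, the identity on $(X,\rho)$), so $\alpha=J'\otimes J=\rho=\Psi\otimes\Phi$, which is precisely condition~(\ref{n4.3}).

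\emph{Main obstacle.} Most of the labour is the batch of routine checks --- that $(X,\rho)$ is a \donotbreakdash{$\Q$}category and that $J$, $\Phi''$, $\Psi''$, $J'$ satisfy \ref{D1}--\ref{D3} and the adjunction identities --- all of which are mechanical consequences of \ref{C2}, \ref{D2} and the unit/counit identities of $\Phi\dashv\Psi$. The only genuinely non-formal ingredients are: (i) choosing $(X,\Psi\otimes\Phi)$ as the intermediate object through which $\Phi$ factors as an epimorphism followed by a morphism, and noticing that this first factor is automatically an epimorphism by Theorem~\ref{them4.2}; and (ii) the fact that an invertible distributor is simultaneously the left and the right adjoint of its inverse, which is exactly what permits the passage from $E$ (resp.\ $J$) having a distributor inverse to $E$ (resp.\ $J$) being an isomorphism in \donotbreakdash{$\Q$}\cat{Set}.
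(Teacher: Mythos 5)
Your proof is correct. The sufficiency direction is essentially the paper's argument: both reduce condition~(\ref{n4.3}) to the equality $\Psi\otimes\Phi=\alpha$, take an arbitrary factorization of $\Phi$ through an epimorphism $E$, squeeze $E^{*}\otimes E$ between $\alpha$ and $\Psi\otimes\Phi$, and invoke Theorem~\ref{them4.2} (in its equality form $E\otimes E^{*}=\gamma$) to exhibit $E^{*}$ as a two-sided inverse that is itself a left adjoint distributor. Where you genuinely diverge is in the necessity direction. The paper builds its intermediate object inside the presingleton space $(\widehat{Y},\widehat{\beta})$, taking the sub-\donotbreakdash{$\Q$}category $Z_0$ of presingletons $\mu$ with $\widehat{\beta}(\mu,\mu)=\tbigvee_{x}\widehat{\beta}(\mu,\mu_x)\circ\widehat{\beta}(\mu_x,\mu)$ and factoring $\Phi=\Theta\otimes\Xi$ through it; you instead put the structure $\rho=\Psi\otimes\Phi$ directly on the underlying typed set $X$ and factor $\Phi$ through the tautological distributor $J\colon(X,\alpha)\circlearrow(X,\rho)$. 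The two intermediate objects both realize the image of $\Phi$ and are isomorphic in \donotbreakdash{$\Q$}\cat{Set}, but your construction is more elementary: it bypasses the presingleton machinery entirely, and the verification that the first factor is an epimorphism collapses to $\rho(z,z)\le\rho(z,z)\circ\rho(z,z)$, a one-line consequence of $1_{|z|}\le\rho(z,z)$. What the paper's choice buys is reuse --- the same $Z_0$ is recycled verbatim in the proof of Corollary~\ref{cor4.4} to write down the \donotbreakdash{(epi, extremal mono)}factorization explicitly --- whereas your $(X,\rho)$ would serve that purpose just as well. A small bonus of your write-up is that you also check that $\Phi$ is a monomorphism in the sufficiency direction, a point the paper's proof leaves implicit.
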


\begin{comment} Referring to \ref{D2}, \ref{D3} and (\ref{n2.2CC}), condition (\ref{n4.3}) is equivalent to:
\begin{equation} \label{n4.5'}\tag{$5.5^{\prime}$}
\alpha (x_1,x_2)=\tbigvee_{y\in Y_0} \Psi(x_1,y)\circ \Phi(y,x_2)=(\Psi\otimes \Phi)(x_1,x_2),\qquad x_1,x_2 \in X_0.
\end{equation}
\end{comment}
 
\begin{proof} (Necessity): Let $\Phi\colon (X,\alpha)\circlearrow(Y,\beta)
$ 
be an extremal monomorphism in \donotbreakdash{$\Q$}\cat{Set}. Referring to Remark~\ref{remark4.1}\,(2) define:
\[Z_0:=\bigset{\mu\in \widehat{Y}_0\mid\widehat{\beta}(\mu,\mu)=\tbigvee_{x\in X} \widehat{\beta}(\mu, \mu_x)\circ \widehat{\beta}(\mu_x,\mu)}.
   \]
 Now consider the restriction of $\widehat{\beta}$ to $Z_0\times Z_0$, and define the following distributors:
\begin{equation*}\Xi\colon (X,\alpha)\circlearrow(Z,\widehat{\beta}),\ \Upsilon\colon (Z,\widehat{\beta})\circlearrow(X,\alpha), \ \Gamma\colon (Y,\beta)\circlearrow(Z,\widehat{\beta}),\ \Theta\colon(Z,\widehat{\beta})\circlearrow(Y,\beta)\end{equation*}
\begin{equation*}\Xi(\mu,x)=\widehat{\beta}(\mu,\mu_x),\quad \Upsilon(x,\mu)=\widehat{\beta}(\mu_x,\mu), \quad \Gamma(\mu,y)=\widehat{\beta}(\mu,\widetilde{y}), \quad \Theta(y,\mu)=\widehat{\beta}(\widetilde{y},\mu).
\end{equation*}
It is straightforward to verify that:
\begin{equation*}\Xi\dashv \Upsilon\quad \text{and} \quad \Theta\dashv \Gamma .
\end{equation*}

We observe that for each $x\in X_0$ and $y\in Y_0$:
\begin{equation*} (\Theta \otimes \Xi)(y,x)
=\tbigvee_{\mu \in Z_0} \widehat{\beta}(\widetilde{y},\mu)\circ \widehat{\beta}(\mu,\mu_x)= \widehat{\beta}(\widetilde{y},\mu_x)=\tau_x(y)=\Phi(y,x).
\end{equation*}
Thus we have: $\Phi=\Theta \otimes \Xi$. Analogously, we verify $\Psi=\Upsilon \otimes \Gamma$. Now, by the definition of $Z_0$ and Theorem~\ref{them4.2}, the left adjoint distributor $\Xi$ is an 
epimorphism. 
Since $\Phi$ is an extremal monomorphism, it follows that $\Xi$ must be an isomorphism. Therefore, for all $x_1,x_2\in X_0$ and $y\in Y_0$ we compute:
\begin{align*}\tbigvee\limits_{y\in Y_0}\Psi(x_1,y)\circ \Phi(y,x_2)&= \widehat{\beta}(\mu_{x_1},\mu_{x_2})=\tbigvee_{\mu \in Z_0} \widehat{\beta}(\mu_{x_1},\mu)\circ \widehat{\beta}(\mu, \mu_{x_2})\\
&=\tbigvee_{\mu \in Z_0} \Upsilon(x_1,\mu)\circ \Xi(\mu,x_2)=
\alpha(x_1,x_2),
 \end{align*}
which verifies condition (\ref{n4.5'}).
\\[2pt]
(Sufficiency): Assume a decomposition  $ \Theta \otimes \Xi= \Phi$ and $\Upsilon \otimes \Gamma = \Psi$ with:
\begin{equation*}\Xi\colon (X,\alpha)\circlearrow(Z,\gamma),\ \Upsilon\colon (Z,\gamma)\circlearrow(X,\alpha), \ \Gamma\colon (Y,\beta)\circlearrow(Z,\gamma),\ \Theta\colon(Z,\gamma)\circlearrow(Y,\beta),\end{equation*}
and adjunctions $\Xi\dashv \Upsilon$ and $\Theta\dashv \Gamma$.\\
We assume that $\Xi$ is an epimorphism and aim to show that $\Xi$ is an isomorphism.
 By Theorem~\ref{them4.2}, the relation $\gamma(z,z)\le \tbigvee_{x\in X_0}\Xi(z,x) \circ \Upsilon(x,z)$ holds
 for all $z\in Z_0$,
  which is equivalent to $\gamma(z_1,z_2)=\tbigvee_{x \in X_0} \Xi(z_1,x) \circ \Upsilon(x,z_2)$ 
 (see Comment below of Theorem~\ref{them4.2}). Thus, it suffices to verify: 
\stepcounter{num}
\begin{equation}\label{n4.6}
\alpha(x_1,x_2)=\tbigvee_{z\in Z_0} \Upsilon(x_1,z)\circ \Xi(z,x_2),\qquad x_1,x_2\in X_0.
\end{equation}
For all $x_1,x_2\in X_0$, we compute:
\begin{align*} \alpha(x_1,x_2)&\le\tbigvee_{z\in Z_0} \Upsilon (x_1,z) \circ \Xi(z,x_2) =\tbigvee_{z\in Z_0}\Upsilon (x_1,z) \circ \gamma(z,z)\circ \Xi(z,x_2) \\
&\le \tbigvee_{z\in Z_0,\, y\in Y_0} \Upsilon(x_1,z)\circ \Gamma(z,y) \circ \Theta(y,z)\circ  \Xi(z,x_2)\\
&\le\tbigvee_{ y\in Y_0}(\Upsilon\otimes \Gamma)(x_1,y)\circ (\Theta\otimes \Xi)(y,x_2)=\tbigvee_{y\in Y_0} \Psi(x_1,y) \circ \Phi(y,x_2).
\end{align*}
Now, applying condition (\ref{n4.3}):
\begin{equation*}\alpha(x_1,x_2)\le\tbigvee_{z\in Z_0} \Upsilon (x_1,z) \circ \Xi(z,x_2)\le \tbigvee_{y\in Y_0} \Psi(x_1,y) \circ \Phi(y,x_2)\le\alpha(x_1,x_2).\end{equation*}
Thus, condition (\ref{n4.6}) is verified, and $\Xi$ is an isomorphism. 
\end{proof}

\begin{corollary}\label{cor4.4} The category \donotbreakdash{$\Q$}$\cat{Set}$ is an  \donotbreakdash{\textup(epi, extremal mono\textup)}category. \end{corollary}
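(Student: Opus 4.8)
The plan is to extract from the results just proved a canonical factorization of an arbitrary morphism of \donotbreakdash{$\Q$}\cat{Set} as an epimorphism followed by an extremal monomorphism; everything else is formal bookkeeping. Concretely, given a left adjoint distributor $\Phi\colon (X,\alpha)\circlearrow(Y,\beta)$ with right adjoint $\Psi$, I would re-run the construction from the necessity part of the proof of Theorem~\ref{them4.3}, using the notation of Remarks~\ref{remark4.1}\,(2): form the presingleton space $(\widehat{Y},\widehat{\beta})$, put
\[Z_0=\bigset{\mu\in\widehat{Y}_0\mid \widehat{\beta}(\mu,\mu)=\tbigvee_{x\in X_0}\widehat{\beta}(\mu,\mu_x)\circ\widehat{\beta}(\mu_x,\mu)},\]
equip it with the restriction of $\widehat{\beta}$ (a \donotbreakdash{$\Q$}category, being a full sub-\donotbreakdash{$\Q$}category of $(\widehat{Y},\widehat{\beta})$), and set $\Xi(\mu,x)=\widehat{\beta}(\mu,\mu_x)$, $\Upsilon(x,\mu)=\widehat{\beta}(\mu_x,\mu)$, $\Gamma(\mu,y)=\widehat{\beta}(\mu,\widetilde{y})$, $\Theta(y,\mu)=\widehat{\beta}(\widetilde{y},\mu)$. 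The point I would stress is that the opening half of that proof uses nothing about $\Phi$ being an extremal monomorphism: it establishes unconditionally that $\Xi\dashv\Upsilon$, $\Theta\dashv\Gamma$, and, because each $\mu_x$ lies in $Z_0$ (the element $\widehat{\beta}(\mu_x,\mu_x)$ being idempotent w.r.t.\ composition, cf.\ (\ref{n2.1C4})), that $\Phi=\Theta\otimes\Xi$. So this already is a factorization of $\Phi$ through $(Z,\widehat{\beta})$.

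Next I would identify the two factors via the characterizations of the preceding theorems. By the very definition of $Z_0$, every $\mu\in Z_0$ satisfies $\widehat{\beta}(\mu,\mu)\le\tbigvee_{x\in X_0}\Xi(\mu,x)\circ\Upsilon(x,\mu)$, so Theorem~\ref{them4.2} shows $\Xi$ is an epimorphism in \donotbreakdash{$\Q$}\cat{Set}. For $\Theta$, axiom \ref{C2} applied to the \donotbreakdash{$\Q$}category $(\widehat{Y},\widehat{\beta})$ at the three points $\mu_1,\widetilde{y},\mu_2$ gives
\[\Gamma(\mu_1,y)\circ\Theta(y,\mu_2)=\widehat{\beta}(\mu_1,\widetilde{y})\circ\widehat{\beta}(\widetilde{y},\mu_2)\le\widehat{\beta}(\mu_1,\mu_2),\qquad \mu_1,\mu_2\in Z_0,\ y\in Y_0,\]
which is exactly condition (\ref{n4.3}) for $\Theta$ (with domain $(Z,\widehat{\beta})$), so Theorem~\ref{them4.3} shows $\Theta$ is an extremal monomorphism. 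Hence $\Phi=\Theta\otimes\Xi$ is an (epi, extremal mono)-factorization, and every morphism of \donotbreakdash{$\Q$}\cat{Set} admits one.

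To conclude that \donotbreakdash{$\Q$}\cat{Set} is an (epi, extremal mono)-category, I would then invoke the standard categorical fact that for the classes of all epimorphisms and all extremal monomorphisms the unique diagonalization property is automatic once all morphisms admit such factorizations: given an epimorphism $e$ and an extremal monomorphism $m$ with $m\circ u=v\circ e$, factor $u$ and $v$ and use the essential uniqueness of (epi, extremal mono)-factorizations to produce and identify the diagonal (whence, in particular, extremal monomorphisms are closed under composition); closure of both classes under composition with isomorphisms is immediate. The one step where genuine care is needed is the identity $\Phi=\Theta\otimes\Xi$ for an \emph{arbitrary} $\Phi$ — that is, verifying $\mu_x\in Z_0$ and that the relevant suprema collapse by \ref{C2}, \ref{C3} — since everything after that is a direct appeal to Theorems~\ref{them4.2} and~\ref{them4.3}; I do not anticipate a serious obstacle beyond this.
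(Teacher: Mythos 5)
Your construction and verification of the factorization itself are sound and coincide exactly with part (a) of the paper's proof: the same subset $Z_0\subseteq\widehat{Y}_0$, the same four distributors $\Xi,\Upsilon,\Gamma,\Theta$, with $\Xi$ recognized as an epimorphism directly from the definition of $Z_0$ via Theorem~\ref{them4.2}, and $\Theta$ recognized as an extremal monomorphism via condition (\ref{n4.3}) and Theorem~\ref{them4.3}. Your insistence on checking $\mu_x\in Z_0$ (so that $\Phi=\Theta\otimes\Xi$) is exactly the right point, and your argument for it via the idempotency of $\widehat{\beta}(\mu_x,\mu_x)$ is correct.

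The gap is in your final paragraph. It is \emph{not} a general categorical fact that the classes (all epimorphisms, all extremal monomorphisms) form a factorization system as soon as every morphism admits such a factorization: mere factorizability does not even guarantee essential uniqueness of the factorizations, which your diagonalization argument presupposes. One can build a five-object counterexample: take $f=m_1\circ e_1=m_2\circ e_2$ through two objects $Z_1,Z_2$ with no morphisms between them, and adjoin a parallel pair $t_1\neq t_2$ out of the common codomain with $t_1\circ m_i=t_2\circ m_i$; then the $e_i$ are (vacuously) epic, the $m_i$ are non-epic and hence extremal monomorphisms, every morphism still factors, yet the two factorizations of $f$ are not isomorphic. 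The standard positive results of this type (extremal monos being strong, automatic diagonalization, closure under composition) all require extra hypotheses such as pullbacks, equalizers, or closure properties that are themselves what is at stake here. So essential uniqueness has to be \emph{proved} for \donotbreakdash{$\Q$}$\cat{Set}$, and this is precisely part (b) of the paper's proof: for two factorizations $\Phi=\Theta_1\otimes\Xi_1=\Theta_2\otimes\Xi_2$ one uses the equalities $\gamma_i=\Xi_i\otimes\Upsilon_i$ from (\ref{n4.1'}) and $\gamma_i=\Gamma_i\otimes\Theta_i$ from (\ref{n4.5'}) to show by a short computation that $\Xi_2\otimes\Upsilon_1$ and $\Xi_1\otimes\Upsilon_2$ are mutually inverse isomorphisms between the middle objects. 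You need to supply this computation (or an equivalent one); once essential uniqueness is in hand, your derivation of the diagonal is unobjectionable.
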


\begin{proof} (a) (Existence of the \donotbreakdash{(epi, extremal mono)}factorization)\\ 
Let $\Phi\colon(X,\alpha) \circlearrow (Y, \beta)$ be a left adjoint distributor, and let $\Psi\colon (Y,\beta) \circlearrow (X,\alpha)$ be its right adjoint distributor. 
Using the notation from Remark~\ref{remark4.1}\,(2) and the first part of the proof of Theorem~\ref{them4.3}, 
we consider the \donotbreakdash{$\Q$}category $(Z,\widehat{\beta})$, where for each $x\in X_0$, the presingleton associated with $x$ is $\mu_x=(\Psi(x,\underline{\phantom{x}}), |x|, \Phi(\underline{\phantom{x}},x))$. Define
\begin{equation*}Z_0=\bigset{\mu\in \widehat{Y}_0\mid \widehat{\beta}(\mu,\mu)= \tbigvee_{x\in X_0} \widehat{\beta}(\mu,\mu_x)\circ \widehat{\beta}(\mu_x,\mu)}.\end{equation*}
By definition of $Z_0$, the left adjoint distributor $\Xi\colon (X,\alpha)\circlearrow (Z,\widehat{\beta})$ defined by $\Xi(\mu,x)=\widehat{\beta}(\mu,\mu_x)$ is an epimorphism (cf.\ Theorem~\ref{them4.2}).    
With regard to $\Gamma$ and $\Theta$ (see the first part of the proof of Theorem~\ref{them4.3}) we now observe:
\begin{equation*}\tbigvee_{y\in Y_0} \Gamma(\mu_1,y)\circ \Theta(y,\mu_2)=\tbigvee_{y\in Y_0} \widehat{\beta}(\mu_1,\widetilde{y})\circ \widehat{\beta}(\widetilde{y},\mu_2)=\widehat{\beta}(\mu_1,\mu_2).
\end{equation*}
Hence, by Theorem~\ref{them4.3}, $\Theta$ is an extremal monomorphism, and since $\Phi=\Theta \otimes \Xi$, the factorization of $\Phi$ into an epimorphism followed by an extremal monomorphism is verified.
\pagebreak
\\[2pt]
(b) (Uniqueness of the \donotbreakdash{(epi, extremal mono)}factorization)\\
 Let $\Xi_i\colon(X,\alpha) \circlearrow (Z_i,\gamma_i)$ and $\Theta_i\colon (Z_i,\gamma_i) \circlearrow (Y,\beta)$ be two \donotbreakdash{(epi, extremal mono)}factorization of $\Phi$ in \donotbreakdash{$\Q$}$\cat{Set}$ (for $i\in\set{1,2}$). Let $\Upsilon_i\colon(Z_i,\gamma_i) \circlearrow (X,\alpha)$ be the right adjoint distributor of $\Xi_i$, and $\Gamma_i\colon (Y,\beta)\circlearrow (Z_i,\gamma_i)$ be the right adjoint distributor of $\Theta_i$. We show that $\Xi_2\otimes \Upsilon_1\colon (Z_1,\gamma_1) \circlearrow (Z_2,\gamma_2)$ is an isomorphism, with the inverse $\Xi_1 \otimes \Upsilon_2$.\\
 Since both $\Xi_1$ and $\Xi_2$ are epimorphisms, we apply condition (\ref{n4.1'}) and compute:
 \begin{align*} \Theta_1 \otimes \Xi_1\otimes\Upsilon_2\otimes \Xi_2 \otimes \Upsilon_1&= \Phi\otimes \Upsilon_2 \otimes \Xi_2 \otimes \Upsilon_1= \Theta_2 \otimes \Xi_2 \otimes \Upsilon_2 \otimes \Xi_2 \otimes \Upsilon_1\\
 &= \Theta_2 \otimes \gamma_2 \otimes\Xi_2 \otimes\Upsilon_1=\Phi\otimes \Upsilon_1=\Theta_1\otimes \Xi_1 \otimes \Upsilon_1=\Theta_1 \otimes \gamma_1.
  \end{align*}
Since $\Theta_1$ is an extremal monomorphism, we apply condition (\ref{n4.5'}) and conclude: 
\begin{equation*}(\Xi_1\otimes\Upsilon_2)\otimes (\Xi_2 \otimes \Upsilon_1)=\gamma_1.\end{equation*}
Analogously we show $(\Xi_2\otimes\Upsilon_1)\otimes (\Xi_1 \otimes \Upsilon_2)=\gamma_2$. Hence, the uniqueness of the
\donotbreakdash{(epi, extremal mono)}factorization is verified.
\end{proof}

\section{The symmetry axiom and the Cauchy completion}\label{section6}
A \donotbreakdash{$\Q$}category $(X,\alpha)$ is said to be \emph{Cauchy complete} if, for every presingleton $\mu=({\Qpresheavef},{\objectQuantaloida},{\Qpresheaveg})$, there exists a unique element $x\in X_0$ such that $\mu=\widetilde{x}$ --- i.e.,~$|x|={\objectQuantaloida}$, ${\Qpresheavef}=\alpha(x,\underline{\phantom{x}})$ and ${\Qpresheaveg}=\alpha(\underline{\phantom{x}},x)$ (cf.\ Example~\ref{example3.4}). In particular, every Cauchy complete \donotbreakdash{$\Q$}category is separated.

If $(\Q,j)$ is an involutive quantaloid, then a \donotbreakdash{$\Q$}category $(X,\alpha)$ is Cauchy complete if and only if its dual \donotbreakdash{$\Q$}category $(X,\alpha^{op})$ is Cauchy complete.
Moreover, every separated and cocomplete \donotbreakdash{$\Q$}category $(X,\alpha)$ is Cauchy complete. 
In fact, if $\sup\nolimits_{X}$ is the left adjoint of the \donotbreakdash{$\Q$}enriched Yoneda embedding $\eta_{(X,\alpha)}$, 
then applying (\ref{n3.0}) yields, for every presingleton $\mu=({\Qpresheavef},{\objectQuantaloida},{\Qpresheaveg})$ of $(X,\alpha)$ and every $y\in X_0$:
\begin{equation*}\alpha(\sup\nolimits_{X}({\objectQuantaloida},{\Qpresheaveg}),y)= \pi(({\objectQuantaloida},{\Qpresheaveg}),\eta_{(X,\alpha)}(y))=\tbigwedge_{x\in X} (\Qpresheaveg(x)\searrow \alpha(x,y))=\Qpresheavef(y).
\end{equation*}
 Hence, it follows that $\mu=\widetilde{\sup\nolimits_{X}({\objectQuantaloida},{\Qpresheaveg})}$. The uniqueness of $\sup\nolimits_{X}({\objectQuantaloida},{\Qpresheaveg})$ is guaranteed by the separation axiom.

\begin{lemma}\label{lem5.1} If $(X,\alpha)$ is a \donotbreakdash{$\Q$}category, then its presingleton space $(\widehat{X},\widehat{\alpha})$ is Cauchy complete.\end{lemma}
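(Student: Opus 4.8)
The plan is to show that the map $\nu\mapsto\widetilde\nu$ from the set $\widehat X_0$ of presingletons of $(X,\alpha)$ to the set of presingletons of $(\widehat X,\widehat\alpha)$ is a bijection; this is exactly the assertion that $(\widehat X,\widehat\alpha)$ is Cauchy complete. The essential tool is the isomorphism $\Xi\colon(X,\alpha)\circlearrow(\widehat X,\widehat\alpha)$ of (\ref{n3.1AA}): being an isomorphism in \donotbreakdash{$\Q$}\cat{Set}, its right adjoint $\Xi^{*}$ is at the same time its inverse, so both $\Xi$ and $\Xi^{*}$ are left adjoint distributors with $\Xi^{*}\dashv\Xi$; reading off (\ref{n2.2}) (or checking \ref{D3} directly) gives $\Xi^{*}(x,\mu)={\Qpresheaveg}(x)$ for $\mu=({\Qpresheavef},{\objectQuantaloida},{\Qpresheaveg})$.

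\emph{Surjectivity.} Fix a presingleton $\mathfrak m=(F,{\objectQuantaloida},G)$ of $(\widehat X,\widehat\alpha)$, so that $({\objectQuantaloida},G)\dashv({\objectQuantaloida},F)$. Regarding $F$ and $G$ as distributors into and out of the singleton \donotbreakdash{$\Q$}category $(\{\cdot\},\delta)$, I would put ${\Qpresheavef}_{\nu}:=F\otimes\Xi$ and ${\Qpresheaveg}_{\nu}:=\Xi^{*}\otimes G$; a type count shows these are a covariant and a contravariant \donotbreakdash{$\Q$}presheaf on $(X,\alpha)$ of type ${\objectQuantaloida}$, and since the composition of distributors preserves adjointness and $\Xi^{*}\dashv\Xi$, one gets $({\objectQuantaloida},{\Qpresheaveg}_{\nu})\dashv({\objectQuantaloida},{\Qpresheavef}_{\nu})$. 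Hence $\nu:=({\Qpresheavef}_{\nu},{\objectQuantaloida},{\Qpresheaveg}_{\nu})$ is a presingleton of $(X,\alpha)$. Unwinding (\ref{NN}) and the definition of composition of distributors, for every $\mu\in\widehat X_{0}$ one obtains
\[\widehat\alpha(\nu,\mu)=\tbigvee_{\mu^{\prime}\in\widehat X_{0}}F(\mu^{\prime})\circ\widehat\alpha(\mu^{\prime},\mu)\qquad\text{and}\qquad\widehat\alpha(\mu,\nu)=\tbigvee_{\mu^{\prime}\in\widehat X_{0}}\widehat\alpha(\mu,\mu^{\prime})\circ G(\mu^{\prime}).\]
The inequality ``$\le$'' against $F(\mu)$ (resp.\ $G(\mu)$) is the presheaf axiom \ref{Q1} (resp.\ \ref{P1}) for $F$ (resp.\ $G$) on $(\widehat X,\widehat\alpha)$, while the reverse inequality comes from retaining only the summand $\mu^{\prime}=\mu$ together with (\ref{n2.2CC}). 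Thus $\widehat\alpha(\nu,\underline{\phantom{x}})=F$ and $\widehat\alpha(\underline{\phantom{x}},\nu)=G$, and by Example~\ref{example3.4} (see (\ref{n3.3})) this means precisely $\widetilde\nu=\mathfrak m$.

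\emph{Injectivity} (equivalently, separation of $(\widehat X,\widehat\alpha)$). Suppose $\widetilde{\nu_{1}}=\widetilde{\nu_{2}}$ for presingletons $\nu_{i}=({\Qpresheavef}_{i},{\objectQuantaloida},{\Qpresheaveg}_{i})$ of $(X,\alpha)$. Evaluating the two equal presingletons at $\widetilde x$ for $x\in X_{0}$ and using (\ref{n3.3}) gives ${\Qpresheavef}_{1}(x)=\widehat\alpha(\nu_{1},\widetilde x)=\widehat\alpha(\nu_{2},\widetilde x)={\Qpresheavef}_{2}(x)$ and, symmetrically, ${\Qpresheaveg}_{1}(x)=\widehat\alpha(\widetilde x,\nu_{1})=\widehat\alpha(\widetilde x,\nu_{2})={\Qpresheaveg}_{2}(x)$, whence $\nu_{1}=\nu_{2}$. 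Combining the two parts, every presingleton of $(\widehat X,\widehat\alpha)$ equals $\widetilde\nu$ for a unique $\nu\in\widehat X_{0}$, i.e.\ $(\widehat X,\widehat\alpha)$ is Cauchy complete.

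The main bookkeeping obstacle will be keeping the directions of the distributors consistent --- distinguishing the covariant presheaf $F\otimes\Xi$ from the contravariant presheaf $\Xi^{*}\otimes G$, checking they take values in the correct hom-spaces, and invoking correctly that adjoint pairs of distributors compose to adjoint pairs; once the set-up is fixed, the two displayed identities are immediate from the associativity and join-preservation of the composition in $\Q$.
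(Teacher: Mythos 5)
Your proof is correct and is essentially the paper's own argument: your candidate presingleton $({\Qpresheavef}_{\nu},{\objectQuantaloida},{\Qpresheaveg}_{\nu})=(F\otimes\Xi,\,{\objectQuantaloida},\,\Xi^{*}\otimes G)$ coincides pointwise with the pair $({\Qpresheavef}_0,{\objectQuantaloida},{\Qpresheaveg}_0)$ constructed in the paper, and your uniqueness step is the separation of $(\widehat{X},\widehat{\alpha})$ in its equivalent form $\widetilde{\nu_1}=\widetilde{\nu_2}\implies\nu_1=\nu_2$. The only cosmetic difference is that you obtain the adjunction $({\objectQuantaloida},{\Qpresheaveg}_{\nu})\dashv({\objectQuantaloida},{\Qpresheavef}_{\nu})$ by composing adjoint pairs through the isomorphism $\Xi$ (and verify $\widehat{\alpha}(\nu,\underline{\phantom{x}})=F$, $\widehat{\alpha}(\underline{\phantom{x}},\nu)=G$ as equalities via \ref{Q1}, \ref{P1} and (\ref{n2.2CC})), where the paper checks \ref{P2}--\ref{P3} by direct computation and closes with Proposition~\ref{prop2.1}; both routes are sound.
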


\begin{proof} We first show that $(\widehat{X},\widehat{\alpha})$ is separated. 
Let $\mu=({\Qpresheavef},{\objectQuantaloida},{\Qpresheaveg})$ and $\mu^{\prime}=({\Qpresheavef}^{\prime},{\objectQuantaloida},{\Qpresheaveg}^{\prime})$ be presingletons such that $|\mu|={\objectQuantaloida}=|\mu^{\prime}|$ and assume:
\begin{equation*}\widehat{\alpha}(\mu,\mu)=\widehat{\alpha}(\mu,\mu^{\prime})  \quad \text{and}\quad \widehat{\alpha}(\mu^{\prime},\mu^{\prime})=\widehat{\alpha}(\mu^{\prime},\mu).
\end{equation*}
Then we conclude from (\ref{n2.1C4}) and (\ref{n3.3}) that the following relation holds:
\begin{equation*}{\Qpresheaveg}(x)=\widehat{\alpha}(\widetilde{x},\mu)\circ \widehat{\alpha}(\mu,\mu)=\widehat{\alpha}(\widetilde{x},\mu)\circ \widehat{\alpha}(\mu,\mu^{\prime})\le \widehat{\alpha}(\widetilde{x},\mu^{\prime})={\Qpresheaveg}^{\prime}(x).
\end{equation*}
By symmetry, we obtain ${\Qpresheaveg}^{\prime}\le {\Qpresheaveg}$, and hence $\mu=\mu^{\prime}$ by (\ref{n3.0}).
  \\
Now, let $\widehat{\mu}=(\widehat{\Qpresheavef},{\objectQuantaloida},\widehat{\Qpresheaveg})$ be a presingleton of $(\widehat{X},\widehat{\alpha})$. Define a covariant \donotbreakdash{$\Q$}presheaf ${\Qpresheavef}_0$ and a contravariant \donotbreakdash{$\Q$}presheaf ${\Qpresheaveg}_0$ on $(X,\alpha)$ as follows:
\begin{equation*}{\Qpresheavef}_0(x)=\tbigvee_{\mu\in \widehat{X}_0} \widehat{\Qpresheavef}(\mu)\circ {\Qpresheavef}(x)\quad \text{and}\quad {\Qpresheaveg}_0(x)=\tbigvee_{\mu\in \widehat{X}_0} {\Qpresheaveg}(x)\circ \widehat{\Qpresheaveg}(\mu),\qquad x\in X.\end{equation*}
We show that ${\Qpresheaveg}_0$ is left adjoint to ${\Qpresheavef}_0$. In fact, the following relations hold:
\begin{align*}\tbigvee_{x\in X_0} {\Qpresheavef}_0(x)\circ {\Qpresheaveg}_0(x)&\ge\tbigvee_{x\in X_0,\, \mu \in \widehat{X}_0}\widehat{\Qpresheavef}(\mu)\circ {\Qpresheavef}(x)\circ {\Qpresheaveg}(x)\circ \widehat{\Qpresheaveg}(\mu)\ge\tbigvee_{\mu\in \widehat{X}_0} \widehat{\Qpresheavef}(\mu)\circ 1_{|\mu|}\circ \widehat{\Qpresheaveg}(\mu)\ge 1_{{\objectQuantaloida}}\\
{\Qpresheaveg}_0(x_1)\circ {\Qpresheavef}_0(x_2) &= \tbigvee_{\mu,\mu^{\prime}\in \widehat{X}_0} {\Qpresheaveg}(x_1)\circ \widehat{\Qpresheaveg}(\mu)\circ \widehat{\Qpresheavef}(\mu^{\prime})\circ {\Qpresheavef}^{\prime}(x_2)\le \tbigvee_{\mu,\mu^{\prime}\in \widehat{X}_0} {\Qpresheaveg}(x_1)\circ \widehat{\alpha}(\mu,\mu^{\prime})\circ {\Qpresheavef}^{\prime}(x_2)\\
&=  \tbigvee_{\mu,\mu^{\prime}\in \widehat{X}_0} \widehat{\alpha}(\widetilde{x_1},\mu)\circ \widehat{\alpha}(\mu,\mu^{\prime})\circ \widehat{\alpha}(\mu^{\prime}, \widetilde{x_2})\le \alpha(x_1,x_2).
\end{align*}
Hence $\mu_0:=({\Qpresheavef}_0,{\objectQuantaloida},{\Qpresheaveg}_0)$ is a presingleton of $(X,\alpha)$. \\
 Finally, we show that $\widehat{\mu}=\widetilde{\mu_0}$ --- i.e.,~$\widehat{\Qpresheavef}=\widehat{\alpha}(\mu_0,\underline{\phantom{x}})$ and $\widehat{\Qpresheaveg}=\widehat{\alpha}(\underline{\phantom{x}},\mu_0)$. For arbitrary presingletons $\mu=({\Qpresheavef},|\mu|,{\Qpresheaveg})$ and $\mu^{\prime}=({\Qpresheavef}^{\prime},|\mu^{\prime}|,{\Qpresheaveg}^{\prime})$ of $(X,\alpha)$ we compute:
\begin{align*}
\widehat{\alpha}(\mu_0,\mu)&=\tbigvee_{x\in X_0} {\Qpresheavef}_0(x)\circ {\Qpresheaveg}(x)=
\tbigvee_{x\in X_0}\bigl(\tbigvee_{\mu^{\prime}\in \widehat{X}_0}\widehat{\Qpresheavef}(\mu^{\prime})\circ {\Qpresheavef}^{\prime}(x)\bigr)\circ {\Qpresheaveg}(x)=\tbigvee_{\mu^{\prime}\in \widehat{X}_0}\widehat{\Qpresheavef}(\mu^{\prime}) \circ \widehat{\alpha}(\mu^{\prime},\mu)\le \widehat{\Qpresheavef}(\mu).
\end{align*}
Analogously, we show $\widehat{\alpha}(\mu,\mu_0)\le \widehat{\Qpresheaveg}(\mu)$  for all $\mu\in \widehat{X}_0$. Hence Proposition~\ref{prop2.1} implies $\widehat{\Qpresheavef}=\widehat{\alpha}(\mu_0,\underline{\phantom{x}})$ and $\widehat{\Qpresheaveg}=\widehat{\alpha}(\underline{\phantom{x}},\mu_0)$.
\end{proof}

Because of Lemma~\ref{lem5.1}, the presingleton space of a \donotbreakdash{$\Q$}category $(X,\alpha)$ is also referred to as its \emph{Cauchy completion}. 
With regard to the distributor defined in $(\ref{n3.1AA})$, it is easily seen that $(X,\alpha)$ is always isomorphic to its Cauchy completion in the sense of \donotbreakdash{$\Q$}\cat{Set}.
 Consequently, \donotbreakdash{$\Q$}\cat{Set} is equivalent to the subcategory \donotbreakdash{$C\Q$}\cat{Set}, consisting of Cauchy complete \donotbreakdash{$\Q$}categories with functors as morphisms. 
 In fact, the embedding functor $\mathcal{E}\colon \text{$C\Q$-\cat{Set}}\to\text{$\Q$-\cat{Set}}$ is given by (cf.\ \cite[Prop.~7.14]{Stubbe05}):
\begin{equation*}\mathcal{E}(X,\alpha)=(X,\alpha), \quad (X,\alpha)\to^{\varphi} (Y,\beta), \quad \mathcal{E}(\mathcal{\varphi})=\beta(\underline{\,\,\,},\varphi(\underline{\phantom{x}}))\colon (X,\alpha)\circlearrow (Y,\beta).
\end{equation*}
The left adjoint functor $\mathcal{F}\colon\text{$\Q$-\cat{Set}}\to\text{C$\Q$-\cat{Set}}
$ of $\mathcal{E}$ has the form:
\begin{align*}&\mathcal{F}(X,\alpha)=(\widehat{X},\widehat{\alpha}), \quad \Phi\colon(X,\alpha) \circlearrow (Y,\beta),\quad (\widehat{X},\widehat{\alpha}) \to^{\mathcal{F}(\Phi)} (\widehat{Y},\widehat{\beta}),\\
& \mathcal{F}(\Phi)\bigl(\sigma,p,\tau)=(\overline{\sigma},p,\overline{\tau}\bigr),\quad \overline{\sigma}(y)=\tbigvee_{x\in X_0} \sigma(x) \circ \Psi(x,y),\quad \overline{\tau}(y)= \tbigvee_{x\in X_0}\Phi(y,x)\circ \tau(x),
\end{align*}
where $\Psi$ is the right adjoint distributor to $\Phi$. With regard to Corollary~\ref{cor4.4} we come to the conclusion that \donotbreakdash{$C\Q$}\cat{Set} is an \donotbreakdash{(epi, extremal mono)}category.

If $(\Q,j)$ is an \emph{involutive} quantaloid, then the symmetry axiom of a \donotbreakdash{$\Q$}category $(X,\alpha)$ can be expressed as follows:
\begin{enumerate}[label=\textup{(C\arabic*)},start=4,topsep=3pt,itemsep=0pt
]
\item\label{C4} $\alpha(x,y)=j(\alpha(y,x)), \qquad x,y,\in X$. \hfill (Symmetry)
\end{enumerate}
Obviously, $(X,\alpha)$ is symmetric if and only if $(X,\alpha)$ coincides with 
its dual \donotbreakdash{$\Q$}category $(X,\alpha^{op})$ (cf.\ Remark~\ref{rem3.3})
  
On this background we introduce the following terminology. The Cauchy completion preserves the symmetry axiom if the Cauchy completion of a symmetric \donotbreakdash{$\Q$}category is again symmetric. It follows immediately from the hom-arrow-assignment of the presingleton space and the symmetry axiom \ref{C4} that the 
    Cauchy completion preserves the symmetry axiom if and only if every presingleton 
 $({\Qpresheavef},{\objectQuantaloida},{\Qpresheaveg})$ is a \emph{singleton} --- i.e.,~$\Qpresheaveg=j\circ {\Qpresheavef}$ (cf.\ \cite[Def.~6.4]{HK11}). However, there exist simple examples of involutive quantaloids for which the Cauchy completion does not always preserve the symmetry axiom.

\begin{example}\label{exam5.2} Let $G_3=\set{e,a,b}$ be the cyclic group with unity $e$. This group induces a unital and commutative  quantale $\alg{{Q}}_5$ with 5 elements. The Hasse diagram and the multiplication table are given as follows:
 \[
\renewcommand\arraystretch{1.}
\setlength\doublerulesep{0pt}
{\footnotesize\begin{tikzcd}[column sep=15pt,row sep=9pt]
&\top \arrow[-]{dl}\arrow[-]{d}\arrow[-]{dr}\\
e&a&b\\
&\bot\arrow[-]{ul}\arrow[-]{u}\arrow[-]{ur}&
\end{tikzcd}}
\qquad\qquad
{\footnotesize\begin{tabular}{c||c|c|c|c}
$\ast$ & $e$ & $a$ & $b$ &  $\top$\\
\hline\hline
$e$ & $e$ & $a$ & $b$ & $\top$ \\\hline
$a$ &  $a$ & $b$ & $e$ & $\top$ \\\hline
$b$ &  $b$ & $e$ & $a$ &$\top$ \\\hline
$\top$ &  $\top$ & $\top$ & $\top$ & $\top$ \\
\end{tabular}}
  \]
We view $\alg{{Q}}_5$ as a quantaloid with one object, where the type function is  trivial and the identity acts as the involution  $j$. \\
 Now we consider the discrete \donotbreakdash{$\alg{{Q}}_5$}category on $X=\set{a,b}$ --- i.e.$\alpha(a,b)=\alpha(b,a)=\bot$ and $\alpha(a,a)=\alpha(b,b)=e$. Then we introduce the following pair of 
$\alg{{Q}}_5$-presheaves on $(X,\alpha)$:
\begin{equation*} {\Qpresheavef}(a)=a,\quad {\Qpresheavef}(b)=\bot,\quad {\Qpresheaveg}(a)=b,\quad {\Qpresheaveg}(b)=\bot,\end{equation*}
Since the following relations hold:
\begin{equation*}e=({\Qpresheavef}(a)\ast {\Qpresheaveg}(a)) \vee ({\Qpresheavef}(b)\ast {\Qpresheaveg}(b))\quad \text{and}\quad {\Qpresheaveg}(x)\ast {\Qpresheavef}(y)\le \alpha(x,y), \qquad x,y\in \{a,b\}.   
\end{equation*}
the axioms \ref{P2} and \ref{P3} are satisfied, and consequently $\mu=({\Qpresheavef},|\mu|,{\Qpresheaveg})$ is a presingleton of $(X,\alpha)$. However, since ${\Qpresheavef}\neq {\Qpresheaveg}$, the Cauchy completion of $(X,\alpha)$ is not symmetric.
\end{example} 

Motivated by the previous example we provide a characterization of the property that the Cauchy completion w.r.t.\ a given involutive quantaloid preserves the symmetry axiom. This result extends \cite[Prop.~3.1]{ho11} and addresses a problem posed in \cite{BW82}.

\begin{proposition} \label{prop5.3} Let $(\Q,j)$ be a small involutive quantaloid. Then the following statements are equivalent\textup:
\begin{enumerate}[label=\textup{(\arabic*)},topsep=3pt,itemsep=0pt
,leftmargin=20pt,labelwidth=10pt,itemindent=0pt,labelsep=5pt,topsep=5pt,itemsep=3pt
]
\item The Cauchy completion preserves the symmetry axiom of \donotbreakdash{$\Q$}categories.
\item For every system $(\newmorphism{20}{\objectQuantaloida_i}{\arrowQuantaloidf_i}{\objectQuantaloida_0}
, \newmorphism{20}{\objectQuantaloida_0}{\arrowQuantaloidg_i}{\objectQuantaloida_i}
)_{i\in I}$  of morphisms in $\Q$, satisfying the following properties\textup:
\begin{enumerate}[label=\textup{(\alph*)},topsep=0pt,itemsep=0pt
]
\item $1_{{\objectQuantaloida}_0} \le \tbigvee_{i\in I} {\arrowQuantaloidf}_i \circ {\arrowQuantaloidg}_i$,
\item for all $i_1,i_2 \in I$\textup: \\
$\begin{array}{rl}
{\arrowQuantaloidf}_{i_1}\circ {\arrowQuantaloidg}_{i_1} \circ {\arrowQuantaloidf}_{i_2} \le {\arrowQuantaloidf}_{i_2}, &{\arrowQuantaloidg}_{i_1} \circ {\arrowQuantaloidf}_{i_2} \circ {\arrowQuantaloidg}_{i_2}\le {\arrowQuantaloidg}_{i_1}\\
j({\arrowQuantaloidg}_{i_1}) \circ {\arrowQuantaloidg}_{i_1} \circ {\arrowQuantaloidf}_{i_2}\le j({\arrowQuantaloidg}_{i_2}),&{\arrowQuantaloidg}_{i_1} \circ {\arrowQuantaloidf}_{i_2} \circ j({\arrowQuantaloidf}_{i_2}) \le j({\arrowQuantaloidf}_{i_1}),
\end{array}
$
\end{enumerate}
 the following relations hold\textup:
\begin{equation*}1_{{\objectQuantaloida}_0}\le \tbigvee_{i\in I} {\arrowQuantaloidf}_i \mathbin{\circ_{{\objectQuantaloida}_{i}}} j( {\arrowQuantaloidf}_i) \quad \text{and}\quad 1_{{\objectQuantaloida}_0} \le \tbigvee_{i\in I}j({\arrowQuantaloidg}_i)\mathbin{\circ_{{\objectQuantaloida}_{i}}} {\arrowQuantaloidg}_i.\end{equation*}
\end{enumerate}
\end{proposition}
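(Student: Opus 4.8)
The plan is to prove $(1)\iff(2)$ by unwinding the definition of "the Cauchy completion preserves symmetry'' in terms of presingletons and reformulating it as a statement about diagrams of $\Q$-arrows, which is exactly what condition (2) encodes. Recall from the discussion after Lemma~\ref{lem5.1} that the Cauchy completion preserves the symmetry axiom if and only if, for every symmetric $\Q$-category $(X,\alpha)$, every presingleton $({\Qpresheavef},{\objectQuantaloida},{\Qpresheaveg})$ of $(X,\alpha)$ is a singleton, i.e.\ ${\Qpresheaveg}=j\circ{\Qpresheavef}$. Since a presingleton is precisely a left-adjoint contravariant presheaf, the data $({\Qpresheavef},{\objectQuantaloida},{\Qpresheaveg})$ on $(X,\alpha)$ unpacks (via \ref{P2}, \ref{P3}) to a system $({\arrowQuantaloidf}_i := {\Qpresheavef}(x_i)\colon {\objectQuantaloida}_i:=|x_i| \to {\objectQuantaloida}_0:={\objectQuantaloida},\ {\arrowQuantaloidg}_i := {\Qpresheaveg}(x_i)\colon {\objectQuantaloida}_0 \to {\objectQuantaloida}_i)_{i\in I}$ indexed by $I := X_0$, together with the hom-arrows $\alpha(x_i,x_k)$. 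The inequalities \ref{P2}, \ref{P3}, \ref{Q1}, \ref{P1}, together with symmetry \ref{C4} (which forces $\alpha(x_k,x_i)=j(\alpha(x_i,x_k))$), translate the presingleton axioms into exactly the hypotheses (a), (b) of condition (2) — in particular the first two inequalities in (b) come from \ref{Q1}/\ref{P1} applied to $\alpha(x_i,x_k)\ge {\Qpresheaveg}(x_i)\circ{\Qpresheavef}(x_k)$, and the last two come from applying the same inequalities to the dual presheaf $j\circ{\Qpresheavef}$ of the dual $\Q$-category. Conversely, given any system $({\arrowQuantaloidf}_i,{\arrowQuantaloidg}_i)_{i\in I}$ satisfying (a), (b), one builds the discrete-on-objects symmetric $\Q$-category with underlying typed set $\{x_i\mid i\in I\}$, $|x_i|={\objectQuantaloida}_i$, and hom-arrow assignment $\alpha(x_i,x_k):={\arrowQuantaloidf}_i\circ j({\arrowQuantaloidf}_k)\vee j({\arrowQuantaloidg}_i)\circ {\arrowQuantaloidg}_k \vee(\text{the diagonal unit where }i=k)$ — checking that (b) makes $\alpha$ satisfy \ref{C1}–\ref{C4} and that $({\Qpresheavef},{\objectQuantaloida}_0,{\Qpresheaveg})$ with ${\Qpresheavef}(x_i)={\arrowQuantaloidf}_i$, ${\Qpresheaveg}(x_i)={\arrowQuantaloidg}_i$ is a presingleton of it.

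With this dictionary established, the equivalence $(1)\iff(2)$ reduces to a single clean assertion at the level of a fixed presingleton over a fixed (symmetric) $(X,\alpha)$: \emph{the presingleton $({\Qpresheavef},{\objectQuantaloida},{\Qpresheaveg})$ is a singleton if and only if} $1_{{\objectQuantaloida}}\le\tbigvee_{i}{\arrowQuantaloidf}_i\mathbin{\circ_{{\objectQuantaloida}_i}}j({\arrowQuantaloidf}_i)$ \emph{and} $1_{{\objectQuantaloida}}\le\tbigvee_{i}j({\arrowQuantaloidg}_i)\mathbin{\circ_{{\objectQuantaloida}_i}}{\arrowQuantaloidg}_i$. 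For the forward direction: if ${\Qpresheaveg}=j\circ{\Qpresheavef}$ then $j({\Qpresheavef})$ is itself a left-adjoint contravariant presheaf, so \ref{P2} for the pair $(j\circ{\Qpresheaveg},\,{\objectQuantaloida},\,j\circ{\Qpresheavef})$ — which after renaming is ${\Qpresheaveg}=j\circ{\Qpresheavef}$, ${\Qpresheavef}=j\circ{\Qpresheaveg}$ — gives $1_{{\objectQuantaloida}}\le\tbigvee_i j({\arrowQuantaloidg}_i)\circ{\arrowQuantaloidg}_i$, and one upgrades the ordinary composition $\circ$ to $\circ_{{\objectQuantaloida}_i}$ using the idempotency/divisibility bookkeeping of Remark~\ref{newremark1.1}, using that ${\arrowQuantaloidf}_i$ factors through $1_{{\objectQuantaloida}_i}\le\alpha(x_i,x_i)$ in the appropriate hom-space; the other inequality is dual. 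For the converse, the two inequalities say that the candidate singleton data is "big enough'' on the diagonal; combined with \ref{P3} for the original presingleton (which gives the "small enough'' direction $j({\arrowQuantaloidf}_i)=j({\Qpresheavef}(x_i))\le {\Qpresheaveg}(x_i)$ and ${\arrowQuantaloidg}_i\le j({\Qpresheavef}(x_i))$ after composing with the witnesses from (a)) and Proposition~\ref{prop2.1} (uniqueness of adjoints), one concludes ${\Qpresheaveg}=j\circ{\Qpresheavef}$.

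\textbf{Main obstacle.} The genuine difficulty is the passage between the \emph{ordinary} composition $\circ$ in $\Q$ that appears in the presingleton axioms \ref{P2}, \ref{P3} and the \emph{diagonal} composition $\circ_{{\objectQuantaloida}_i}$ that appears in the conclusion of (2). These agree only when the relevant hom-arrows are suitably self-divisible/idempotent (cf.\ Remark~\ref{newremark1.1}\,(1) and Example~\ref{exam5.2}, where non-idempotency is precisely what breaks symmetry-preservation), so the equivalence is \emph{not} a formality: one must carefully verify, using \ref{Q1}/\ref{P1} which force ${\arrowQuantaloidf}_i={\arrowQuantaloidf}_i\circ\alpha(x_i,x_i)$ and ${\arrowQuantaloidg}_i=\alpha(x_i,x_i)\circ{\arrowQuantaloidg}_i$ with ${\arrowQuantaloidf}_i\circ j({\arrowQuantaloidf}_i)\le\alpha(x_i,x_i)$ idempotent, that $\tbigvee_i{\arrowQuantaloidf}_i\circ j({\arrowQuantaloidf}_i)=\tbigvee_i{\arrowQuantaloidf}_i\mathbin{\circ_{{\objectQuantaloida}_i}}j({\arrowQuantaloidf}_i)$ — i.e.\ that replacing $\circ$ by $\circ_{{\objectQuantaloida}_i}$ costs nothing here. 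The rest is bookkeeping with the residuation identities \eqref{n2.2}, \eqref{n3.0} and the characterization already recorded after Lemma~\ref{lem5.1}.
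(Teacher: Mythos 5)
Your overall route is the same as the paper's: both directions pass through the characterization that (1) holds iff every presingleton of a symmetric \donotbreakdash{$\Q$}category is a singleton; the direction (2)$\Rightarrow$(1) reads the system off a given presingleton exactly as you describe; and (1)$\Rightarrow$(2) requires manufacturing a symmetric test category on the index set that carries the given system as a presingleton. Two points need repair. First, your test category does not typecheck: by \ref{C1} the arrow $\alpha(x_i,x_k)$ must lie in $\hom(\objectQuantaloida_k,\objectQuantaloida_i)$, whereas ${\arrowQuantaloidf}_i\circ j({\arrowQuantaloidf}_k)$ and $j({\arrowQuantaloidg}_i)\circ {\arrowQuantaloidg}_k$ are not even composable unless $\objectQuantaloida_i=\objectQuantaloida_k$. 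The joinands you want are ${\arrowQuantaloidg}_i\circ {\arrowQuantaloidf}_k$ and $j({\arrowQuantaloidf}_i)\circ j({\arrowQuantaloidg}_k)$, and verifying \ref{C2}, \ref{Q1} and \ref{P1} for this join then genuinely consumes all four inequalities of (b) --- that verification, not the issue you flag below, is where the work sits. The paper sidesteps this bookkeeping by taking instead the \emph{largest} compatible structure, $\alpha_I(i_1,i_2)=({\arrowQuantaloidf}_{i_1}\searrow {\arrowQuantaloidf}_{i_2})\wedge({\arrowQuantaloidg}_{i_1}\swarrow {\arrowQuantaloidg}_{i_2})\wedge(j({\arrowQuantaloidg}_{i_1})\searrow j({\arrowQuantaloidg}_{i_2}))\wedge(j({\arrowQuantaloidf}_{i_1})\swarrow j({\arrowQuantaloidf}_{i_2}))$, for which \ref{C2}, \ref{C3}, \ref{C4} and the extensionality conditions \ref{Q1}, \ref{P1} hold automatically by residual calculus, and (b) is then exactly the statement ${\arrowQuantaloidg}_{i_1}\circ {\arrowQuantaloidf}_{i_2}\le \alpha_I(i_1,i_2)$, i.e.\ \ref{P3}.

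Second, your declared ``main obstacle'' is not there. The subscript in $\circ_{\objectQuantaloida_i}$ is an \emph{object} of the ambient quantaloid $\Q$, so in Proposition~\ref{prop5.3} it can only denote the ordinary composition of $\Q$ through $\objectQuantaloida_i$; the diagonal composition of Remark~\ref{newremark1.1} is indexed by a \emph{morphism} and does not parse for a general $\Q$. (It coincides with the diagonal composition only in the later specialization to $\Q=D\alg{Q}$, where the quantaloid's own composition happens to be $\circ_{\elementQb}$.) Consequently, once $\Qpresheaveg=j\circ \Qpresheavef$ is known, condition (a) yields both displayed inequalities of (2) by the substitutions ${\arrowQuantaloidg}_i=j({\arrowQuantaloidf}_i)$ and ${\arrowQuantaloidf}_i=j({\arrowQuantaloidg}_i)$, with no idempotency or divisibility argument. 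Your converse direction is sound as sketched: the last two inequalities of (b) are exactly the paper's (\ref{n5.2}), and composing them with the two inequalities furnished by (2) gives $\Qpresheaveg\le j\circ \Qpresheavef$ and $\Qpresheavef\le j\circ \Qpresheaveg$, after which Proposition~\ref{prop2.1} (or a single application of $j$) finishes the argument.
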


\begin{proof}  (1)$\Longrightarrow$(2):
Let $I$ be a typed set with $|i|={\objectQuantaloida}_i\in \Q_0$ for each $i\in I$. From the system $(\newmorphism{20}{\objectQuantaloida_i}{\arrowQuantaloidf_i}{\objectQuantaloida_0}
,   \newmorphism{20}{\objectQuantaloida_0}{\arrowQuantaloidg_i}{\objectQuantaloida_i}
)_{i\in I}$, we construct a symmetric \donotbreakdash{$\Q$}category $\mathds{I}=(I,\alpha_I)$ by defining:
\begin{equation*}\alpha_I({i_1},{i_2})= ({\arrowQuantaloidf}_{i_1} \searrow {\arrowQuantaloidf}_{i_2}) \wedge ({\arrowQuantaloidg}_{i_1} \swarrow {\arrowQuantaloidg}_{i_2})\wedge
 (j({\arrowQuantaloidg}_{i_1})\searrow j({\arrowQuantaloidg}_{i_2}))\wedge 
 (j({\arrowQuantaloidf}_{i_1})\swarrow j({\arrowQuantaloidf}_{i_2})),
    \end{equation*}
 for each $i_1,i_2\in I$.
Then property (b) implies: 
\stepcounter{num}
\begin{equation}\label{n5.1} {\arrowQuantaloidg}_{i_1}\circ {\arrowQuantaloidf}_{i_2} \le \alpha_I({i_1},{i_2}),\qquad  {i_1},{i_2}\in I.
\end{equation}
Now define a covariant \donotbreakdash{$\Q$}presheaf ${\Qpresheavef}$ and a contravariant  ${\Qpresheaveg}$ of type ${\objectQuantaloida}_0$ on $\mathds{I}$ by:
\begin{equation*}{\Qpresheavef}(i)={\arrowQuantaloidf}_i\quad\text{and}\quad {\Qpresheaveg}(i)={\arrowQuantaloidg}_i, \qquad i\in I.
\end{equation*}
By property (a) and inequality (\ref{n5.1}), ${\Qpresheavef}$ is right adjoint to ${\Qpresheaveg}$ --- i.e.,~$({\Qpresheavef},{\objectQuantaloida}_0,{\Qpresheaveg})$ is a presingleton of $\mathds{I}$. Since the Cauchy completion preserves the symmetry axiom, it follows that ${\Qpresheaveg}=j\circ{\Qpresheavef}$. Thus, condition (2) follows.\\[2pt]
 (2)$\Longrightarrow$(1): Let $(X,\alpha)$ be a symmetric \donotbreakdash{$\Q$}category with type map $t\colon X_0 \to \Q_0$, and let $({\Qpresheavef},{\objectQuantaloida}_0, {\Qpresheaveg})$ be a presingleton of $(X,\alpha)$. Then, by axioms \ref{P2} and \ref{P3}, the following properties hold for all $x,y\in X_0$:
\begin{equation*}1_{{\objectQuantaloida}_0}\le \tbigvee_{x\in X_0}  {\Qpresheavef}(x) \circ {\Qpresheaveg}(x)\quad\text{and}\quad  {\Qpresheaveg}(x)\circ {\Qpresheavef}(y)\le \alpha(x,y).\end{equation*}

Now, applying the extensionality of contravariant and covariant \donotbreakdash{$\Q$}pre\-sheaves  (cf.\ \ref{P1} and \ref{Q1}) we obtain:
\begin{align*}
&{\Qpresheavef}(x)\circ {\Qpresheaveg}(x)\circ {\Qpresheavef}(y)\le {\Qpresheavef}(x)\circ \alpha(x,y)\le {\Qpresheavef}(y),\\
&  
{\Qpresheaveg}(x) \circ {\Qpresheavef}(y) \circ {\Qpresheaveg}(y)\le \alpha(x,y)\circ {\Qpresheaveg}(y)\le {\Qpresheaveg}(x).
\end{align*}

\noindent Using the symmetry axiom \ref{C4}, and again the extensionality, we derive:
\stepcounter{num}
\begin{align}\label{n5.2}
&{\Qpresheaveg}(x)\circ {\Qpresheavef}(y)\circ j({\Qpresheavef}(y))\le \alpha(x,y) \circ j ( {\Qpresheavef}(y))=j({\Qpresheavef}(y)\circ \alpha(y,x)) \le j({\Qpresheavef}(x))\\
&j({\Qpresheaveg}(x)) \circ {\Qpresheaveg}(x) \circ {\Qpresheavef}(y)\le j({\Qpresheaveg}(x))\circ  \alpha(x,y)= j(\alpha(y,x)\circ {\Qpresheaveg}(x) )\le j({\Qpresheaveg}(y)). \notag
\end{align}
\noindent Thus, the system $(\newmorphism{32}{t(x)}{{\Qpresheavef}(x)}{{\objectQuantaloida}_0},\newmorphism{32}{{\objectQuantaloida}_0}{{\Qpresheaveg}(x)}{t(x)})_{x\in X_0}$ 
satisfies the hypothesis of condition~(2) --- i.e.,~(a) and (b). Therefore, by application of (2) the relations:
\begin{equation*}1_{{\objectQuantaloida}_0}\le \tbigvee_{x\in X_0} {\Qpresheavef}(x)\circ j({\Qpresheavef}(x)) \quad \text{and} \quad 1_{{\objectQuantaloida}_0} \le \tbigvee_{x\in X_0} j({\Qpresheaveg}(x))\circ {\Qpresheaveg}(x)\end{equation*}
follow. Now we refer to (\ref{n5.2}) and obtain for each $x,y\in X_0$:
\begin{align*}& {\Qpresheaveg}(x) \le \tbigvee_{y\in X_0} {\Qpresheaveg}(x)\circ {\Qpresheavef}(y)\circ j({\Qpresheavef}(y))\le  j({\Qpresheavef}(x)),\\
&{\Qpresheavef}(y)\le \tbigvee_{x\in X_0} j({\Qpresheaveg}(x))\circ {\Qpresheaveg}(x)\circ {\Qpresheavef}(y)\le j({\Qpresheaveg}(y)).
\end{align*}
\noindent We conclude that 
${\Qpresheavef}=j\circ {\Qpresheaveg}$ --- i.e.,~the Cauchy completion preserves the symmetry axiom.
\end{proof}

\begin{corollary} \label{corollary5.4}Let $(\Q,j)$ be a small involutive quantaloid such that for every system of morphisms $(\newmorphism{20}{\objectQuantaloida_i}{\arrowQuantaloidf_i}{\objectQuantaloida_0},   \newmorphism{20}{\objectQuantaloida_0}{\arrowQuantaloidg_i}{\objectQuantaloida_i})_{i\in I}$ in $\Q$ satisfying the condition\textup: 
\begin{equation*}1_{{\objectQuantaloida}_0} \le \tbigvee_{i\in I} {\arrowQuantaloidf}_i \circ {\arrowQuantaloidg}_i,
    \end{equation*}
the following additional conditions also hold\textup:
\begin{equation*}1_{{\objectQuantaloida}_0}\le \tbigvee_{i\in I} {\arrowQuantaloidf}_i \circ j( {\arrowQuantaloidf}_i)\quad\text{and}\quad 1_{{\objectQuantaloida}_0} \le \tbigvee_{i\in I}j({\arrowQuantaloidg}_i)\circ {\arrowQuantaloidg}_i.\end{equation*}
Then the Cauchy completion preserves the symmetry axiom of \donotbreakdash{$\Q$}categories.
\end{corollary}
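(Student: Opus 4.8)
The plan is to obtain Corollary~\ref{corollary5.4} as an immediate consequence of Proposition~\ref{prop5.3}: the only thing to do is to verify that the standing hypothesis of the corollary forces condition~(2) of that proposition, after which the equivalence (1)$\,\Longleftrightarrow\,$(2) established there does the rest. In other words, the corollary is a pure quantifier-weakening of Proposition~\ref{prop5.3}\,(2), and I would present it as such.

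Concretely, I would argue as follows. Let $({\arrowQuantaloidf}_i,{\arrowQuantaloidg}_i)_{i\in I}$, with ${\arrowQuantaloidf}_i\colon\objectQuantaloida_i\to\objectQuantaloida_0$ and ${\arrowQuantaloidg}_i\colon\objectQuantaloida_0\to\objectQuantaloida_i$ in $\Q$, be an arbitrary system satisfying properties~(a) and~(b) of Proposition~\ref{prop5.3}\,(2). The only observation needed is that property~(a) is exactly the inequality $1_{\objectQuantaloida_0}\le\tbigvee_{i\in I}{\arrowQuantaloidf}_i\circ{\arrowQuantaloidg}_i$ that occurs as the sole assumption in the hypothesis of the corollary; property~(b) is never invoked. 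Hence the corollary's hypothesis applies to this system and yields $1_{\objectQuantaloida_0}\le\tbigvee_{i\in I}{\arrowQuantaloidf}_i\circ j({\arrowQuantaloidf}_i)$ together with $1_{\objectQuantaloida_0}\le\tbigvee_{i\in I}j({\arrowQuantaloidg}_i)\circ{\arrowQuantaloidg}_i$. Since ${\arrowQuantaloidf}_i\in\hom(\objectQuantaloida_i,\objectQuantaloida_0)$ and $j$ is contravariant, one has $j({\arrowQuantaloidf}_i)\in\hom(\objectQuantaloida_0,\objectQuantaloida_i)$, so these composites are taken through the object $\objectQuantaloida_i$ and coincide with ${\arrowQuantaloidf}_i\mathbin{\circ_{\objectQuantaloida_i}}j({\arrowQuantaloidf}_i)$ and $j({\arrowQuantaloidg}_i)\mathbin{\circ_{\objectQuantaloida_i}}{\arrowQuantaloidg}_i$ respectively; thus the two displayed inequalities are precisely the relations demanded in condition~(2) of Proposition~\ref{prop5.3}. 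As the system was arbitrary subject to~(a) and~(b), condition~(2) is established, and Proposition~\ref{prop5.3} then gives that the Cauchy completion preserves the symmetry axiom of \donotbreakdash{$\Q$}categories.

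I do not expect any real obstacle here: the whole content is the book-keeping remark that the corollary's hypothesis ranges over the \emph{strictly larger} class of systems satisfying only~(a), so that every system satisfying both~(a) and~(b) is automatically in its scope, together with the harmless identification of the subscripted composition $\circ_{\objectQuantaloida_i}$ appearing in Proposition~\ref{prop5.3}\,(2) with ordinary composition in $\Q$ (the subscript merely records the intermediate object). The purpose of the corollary is to isolate the practically useful situation in which the mixed compatibility inequalities~(b) impose nothing beyond~(a); this applies, for instance, to commutative and integral quantales and to the diagonal quantaloids $(D\alg{Q},j)$ attached to frames in the sense of Historical Remark~\ref{histremark1.3}.
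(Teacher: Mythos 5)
Your proposal is correct and matches the paper's (implicit) intent exactly: the paper states Corollary~\ref{corollary5.4} without proof as an immediate consequence of Proposition~\ref{prop5.3}, and your argument --- that the corollary's hypothesis quantifies over the strictly larger class of systems satisfying only (a), hence in particular yields condition (2) of the proposition for all systems satisfying (a) and (b) --- is precisely the intended deduction. The side remark identifying $\circ_{\objectQuantaloida_i}$ with ordinary composition through the intermediate object $\objectQuantaloida_i$ is also accurate.
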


\begin{remark}\label{rem5.5} 
It is easily seen that in the quantaloid $\Q_{\boldsymbol{2}}$ in Example~\ref{exam1.8} the condition $1_{{\objectQuantaloida}_0} \le \tbigvee_{i\in I} {\arrowQuantaloidf}_i \circ {\arrowQuantaloidg}_i$ already implies $1_{{\objectQuantaloida}_0}\le \tbigvee_{i\in I} {\arrowQuantaloidf}_i \circ j( {\arrowQuantaloidf}_i)$ 
and $1_{{\objectQuantaloida}_0} \le \tbigvee_{i\in I}j({\arrowQuantaloidg}_i)\circ {\arrowQuantaloidg}_i$.
 Thus, by Corollary~\ref{corollary5.4}, the Cauchy completion preserves the symmetry axiom of \donotbreakdash{$\Q_{\boldsymbol{2}}$}categories.
 \\
 As an illustration of this situation, consider the discrete \donotbreakdash{$\Q_{\boldsymbol{2}}$}category $(\{\cdot\},\delta)$ with $|\cdot|=b$ and $\delta(\cdot,\cdot)=1_{b}=b$. Its Cauchy completion $(\widehat{\{\cdot\}},\widehat{\delta})$ has three presingletons:
\begin{enumerate}[label=\textup{--},leftmargin=12pt,topsep=3pt,itemsep=0pt
]
\item  $\mu_1=(f_1,\top,g_1)$, with $f_1(\cdot)=a_{\ell}$ and $g_1(\cdot)=a_r$,
\item $\mu_2=(f_2,b,g_2)$, with $f_2(\cdot)=b$ and $g_2(\cdot)=b$,  
\item $\mu_3=(f_3,\bot,g_3)$, with $f_3(\cdot)=\bot$ and $g_3(\cdot)=\bot$.
\end{enumerate}
Since $g_i=j\circ f_i$ for all $i\in\set{1,2,3}$, the Cauchy completion $(\widehat{\{\cdot\}},\widehat{\delta}) $ is symmetric, as expected, and is isomorphic to the terminal object of \donotbreakdash{$\Q_{\boldsymbol{2}}$}\cat{Set}
  (cf.\ Remark~\ref{remark4.1}\,(1)). In fact, the hom-arrow-assignment of the terminal object in $(\Q_{\boldsymbol{2}},\tau)$ (cf.\ Example~\ref{exam2.1}) is:
\begin{equation*}\tau(\top,\top)=\top, \quad \tau(b,b)=b,\quad \tau(\top,b)=a_{\ell}, \quad \tau(b,\top)=a_r,\,\,\, \tau(\bot,\underline{\phantom{x}})=\tau(\underline{\phantom{x}},\bot)=\bot. 
\end{equation*}
\end{remark}

Referring now to Remark~\ref{newremark1.3} we have the following:

\begin{corollary} \label{corollary5.6new} Let $\alg{Q}=(\alg{Q},\ast)$ be a commutative quantale, and let $(D\alg{Q},j)$ be the induced involutive quantaloid as described in Remark~\ref{newremark1.3}. 
Then the Cauchy completion preserves the symmetry axiom of \donotbreakdash{$D\alg{Q}$}categories 
if and only if the following symmetry property holds\textup: 
\begin{equation*}{\elementQa}\le \tbigvee_{i\in I} (\newmorphism{20}{\elementQa_i}{\elementQlambda_i}{\elementQa}
)\mathbin{\circ_{{\elementQa}_i}} (\newmorphism{20}{\elementQa}{\elementQlambda_i}{\elementQa_i})\quad\text{and}\quad{\elementQa}\le \tbigvee_{i\in I}  (\newmorphism{20}{\elementQa_i}{\elementQmu_i}{\elementQa}
)\mathbin{\circ_{{\elementQa}_i}} (\newmorphism{20}{\elementQa}{\elementQmu_i}{\elementQa_i}
)\end{equation*}
for every system $(\newmorphism{20}{\elementQa_i}{\elementQlambda_i}{\elementQa}, \newmorphism{20}{\elementQa}{\elementQmu_i}{\elementQa_i}
)_{i\in I}$  of morphisms in the quantaloid $D\alg{Q}$
satisfying the property ${{\elementQa}\le  \tbigvee_{i\in I} {\elementQlambda}_i \mathbin{\circ_{{\elementQa}_i}} {\elementQmu}_i}$ and the following conditions for all $i_1,i_2 \in I$\textup:
\begin{align*}
&{\elementQlambda}_{i_1} \mathbin{\circ_{{\elementQa}_{i_1}}} {\elementQmu}_{i_1} \mathbin{\circ_{{\elementQa}}} {\elementQlambda}_{i_2} \le {\elementQlambda}_{i_2}, \quad
{\elementQmu}_{i_1} \mathbin{\circ_{{\elementQa}}} {\elementQlambda}_{i_2} \mathbin{\circ_{{\elementQa}_{i_2}}} {\elementQmu}_{i_2}\le {\elementQmu}_{i_1},\\
& (\newmorphism{22}{\elementQa_{i_1}}{\elementQmu_{i_1}}{\elementQa}
) \mathbin{\circ_{{\elementQa}_{i_1}}} (\newmorphism{22}{\elementQa}{\elementQmu_{i_1}}{\elementQa_{i_1}}
) \mathbin{\circ_{{\elementQa}}} (\newmorphism{22}{\elementQa_{i_2}}{\elementQlambda_{i_2}}{\elementQa}
)\le (\newmorphism{22}{\elementQa_{i_2}}{\elementQmu_{i_2}}{\elementQa}
),\\ 
&(\newmorphism{22}{\elementQa}{\elementQmu_{i_1}}{\elementQa_{i_1}}
) \mathbin{\circ_{{\elementQa}}} (\newmorphism{22}{\elementQa_{i_2}}{\elementQlambda_{i_2}}{\elementQa}
) \mathbin{\circ_{{\elementQa}_{i_2}}} (\newmorphism{22}{\elementQa}{\elementQlambda_{i_2}}{\elementQa_{i_2}}
) \le(\newmorphism{22}{\elementQa}{\elementQlambda_{i_1}}{\elementQa_{i_1}}
).
\end{align*}
\end{corollary}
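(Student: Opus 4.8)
The plan is to obtain Corollary~\ref{corollary5.6new} as a direct specialization of Proposition~\ref{prop5.3} to the involutive quantaloid $\Q=(D\alg{Q},j)$ built from the commutative quantale $\alg{Q}$ as in Remark~\ref{newremark1.3}. Since $D\alg{Q}$ is a small quantaloid, Proposition~\ref{prop5.3} applies to it verbatim, so the whole task is to rewrite its condition~(2) in the language of $\alg{Q}$ and to recognise it as the condition displayed in the corollary.

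First I would record the dictionary furnished by Remarks~\ref{newremark1.2} and~\ref{newremark1.3} in the commutative case: the objects of $D\alg{Q}$ are the self-divisible elements of $\alg{Q}$; a morphism $\newmorphism{16}{\elementQa}{\elementQlambda}{\elementQb}$ is an element $\elementQlambda\in\hom(\elementQa,\elementQb)\subseteq\alg{Q}$; every object $\elementQa$ is the unit $1_{\elementQa}$ of its endomorphism quantale $\hom(\elementQa,\elementQa)$ relative to $\circ_{\elementQa}$; the composition $\circ$ of $D\alg{Q}$ of $\newmorphism{16}{\elementQb}{\elementQlambda}{\elementQc}$ after $\newmorphism{16}{\elementQa}{\elementQmu}{\elementQb}$ equals $\elementQlambda\mathbin{\circ_{\elementQb}}\elementQmu$; and the involution $j$ keeps the underlying element of $\alg{Q}$ fixed while interchanging domain and codomain, i.e.\ $j(\newmorphism{16}{\elementQb}{\elementQlambda}{\elementQc})=\newmorphism{16}{\elementQc}{\elementQlambda}{\elementQb}$.

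Next, given a system $(\newmorphism{20}{\objectQuantaloida_i}{\arrowQuantaloidf_i}{\objectQuantaloida_0},\newmorphism{20}{\objectQuantaloida_0}{\arrowQuantaloidg_i}{\objectQuantaloida_i})_{i\in I}$ of morphisms in $D\alg{Q}$, I would set $\elementQa:=\objectQuantaloida_0$, $\elementQa_i:=\objectQuantaloida_i$, $\elementQlambda_i:=\arrowQuantaloidf_i\in\hom(\elementQa_i,\elementQa)$ and $\elementQmu_i:=\arrowQuantaloidg_i\in\hom(\elementQa,\elementQa_i)$, and then rewrite each composite occurring in conditions (a), (b) and the conclusion of Proposition~\ref{prop5.3}(2) by inserting in every occurrence of $\circ$ the appropriate middle object. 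For instance $\arrowQuantaloidf_i\circ\arrowQuantaloidg_i$ becomes $\elementQlambda_i\mathbin{\circ_{\elementQa_i}}\elementQmu_i$, so (a) turns into ${\elementQa}\le \tbigvee_{i\in I} {\elementQlambda}_i \mathbin{\circ_{{\elementQa}_i}} {\elementQmu}_i$; each application of $j$ in the four inequalities of (b) and in the two inequalities of the conclusion merely relabels types, e.g.\ $j(\arrowQuantaloidg_i)$ becomes $\newmorphism{20}{\elementQa_i}{\elementQmu_i}{\elementQa}$ and $j(\arrowQuantaloidf_i)$ becomes $\newmorphism{20}{\elementQa}{\elementQlambda_i}{\elementQa_i}$; and one reads off exactly the four relations and the two conclusions displayed in the corollary. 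Finally, since every self-divisible element of $\alg{Q}$ is an object of $D\alg{Q}$ and every element of a hom-space of $D\alg{Q}$ is a morphism, this passage between systems in $D\alg{Q}$ and families of pairs $(\elementQlambda_i,\elementQmu_i)$ with the prescribed types is exhaustive, so the displayed implication is precisely condition~(2) of Proposition~\ref{prop5.3} for $(D\alg{Q},j)$, and the corollary follows.

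The main obstacle is purely the bookkeeping of middle objects: in each of the four inequalities of (b) and in the two conclusions, the ambient composition $\circ$ of $\Q=D\alg{Q}$ has to be resolved into the correct one of $\circ_{\elementQa_{i_1}}$, $\circ_{\elementQa_{i_2}}$ or $\circ_{\elementQa}$, and one must keep straight which of the two ``copies'' of a given element of $\alg{Q}$ --- viewed as a morphism of type $\elementQa_i\to\elementQa$ or of type $\elementQa\to\elementQa_i$ --- is being composed at each stage. This is entirely mechanical once the translation table above is fixed, and no further idea is required.
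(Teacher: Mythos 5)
Your proposal is correct and coincides with the paper's (implicit) argument: the corollary is stated without a separate proof precisely because it is the verbatim specialization of Proposition~\ref{prop5.3} to $(D\alg{Q},j)$, and your translation table (objects of $D\alg{Q}$ as self-divisible elements, $\circ$ resolved into $\circ_{\elementQa_{i_1}}$, $\circ_{\elementQa_{i_2}}$ or $\circ_{\elementQa}$ according to the middle object, $j$ swapping domain and codomain while fixing the underlying element of $\alg{Q}$) reproduces conditions (a), (b) and the conclusion of Proposition~\ref{prop5.3}(2) exactly as displayed in the corollary. No gap.
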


\begin{remark}\label{rem5.7} 
In the case of the involutive quantaloid $(D\alg{Q},j)$ in Example~\ref{exam1.7}, the condition ${\elementQa}\le  
\tbigvee_{i\in I} {\elementQlambda}_i \mathbin{\circ_{{\elementQa}_i}} {\elementQmu}_i
$ already implies the conditions 
\begin{equation*}{\elementQa}\le \tbigvee_{i\in I} (\newmorphism{20}{\elementQa_i}{\elementQlambda_i}{\elementQa}
)\mathbin{\circ_{{\elementQa}_i}} (\newmorphism{20}{\elementQa}{\elementQlambda_i}{\elementQa_i}
)\quad  \text{and} \quad{\elementQa}\le \tbigvee_{i\in I}  (\newmorphism{20}{\elementQa_i}{\elementQmu_i}{\elementQa}
)\mathbin{\circ_{{\elementQa}_i}} (\newmorphism{20}{\elementQa}{\elementQmu_i}{\elementQa_i}
).\end{equation*}

 Therefore, by Corollary~\ref{corollary5.6new}, 
 the Cauchy completion preserves the symmetry axiom of \donotbreakdash{$D\alg{Q}$}categories. 
\end{remark}

\begin{corollary} \label{corollary5.7} Let $\alg{Q}=(\alg{Q},\ast,\top)$ be an integral and commutative quantale satisfying the following additional property\textup:
\stepcounter{num}
\begin{equation}\label{n5.3}  {\elementQa}\ast {\elementQb}\le ({\elementQa}\ast {\elementQa})\vee ({\elementQb}\ast {\elementQb}), \qquad {\elementQa},{\elementQb} \in \alg{Q}.
\end{equation} 
Then the Cauchy completion with respect to the involutive quantaloid $(D\alg{Q},j)$ preserves the symmetry axiom.
\end{corollary}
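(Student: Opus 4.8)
The plan is to apply Corollary~\ref{corollary5.6new}. Since $\alg{Q}$ is commutative, that result reduces the claim to the following: for every system $({\elementQlambda}_i,{\elementQmu}_i)_{i\in I}$ of morphisms of $(D\alg{Q},j)$, with ${\elementQlambda}_i\in\hom ({\elementQa}_i,{\elementQa})$ and ${\elementQmu}_i\in\hom ({\elementQa},{\elementQa}_i)$, satisfying ${\elementQa}\le\tbigvee_{i\in I}{\elementQlambda}_i\mathbin{\circ_{{\elementQa}_i}}{\elementQmu}_i$ and the four extensionality inequalities of Corollary~\ref{corollary5.6new}, one has ${\elementQa}\le\tbigvee_i{\elementQlambda}_i\mathbin{\circ_{{\elementQa}_i}}j({\elementQlambda}_i)$ and ${\elementQa}\le\tbigvee_i j({\elementQmu}_i)\mathbin{\circ_{{\elementQa}_i}}{\elementQmu}_i$ (recall that, $\alg{Q}$ being commutative, the involution $j$ fixes the elements of $\alg{Q}$ and only interchanges domain and codomain, and that $\mathbin{\circ_{{\elementQa}}}$ is commutative). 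Because $\alg{Q}$ is unital and integral, Remark~\ref{newremark1.2} and equation~(\ref{eqn1.2}) are available; writing $P_i:={\elementQa}_i\rightarrow{\elementQlambda}_i$ and $Q_i:={\elementQa}_i\rightarrow{\elementQmu}_i$, so that ${\elementQlambda}_i={\elementQa}_i\ast P_i$ and ${\elementQmu}_i={\elementQa}_i\ast Q_i$, equation~(\ref{eqn1.2}) yields
\[{\elementQlambda}_i\mathbin{\circ_{{\elementQa}_i}}{\elementQmu}_i={\elementQa}_i\ast P_i\ast Q_i,\qquad {\elementQlambda}_i\mathbin{\circ_{{\elementQa}_i}}j({\elementQlambda}_i)={\elementQa}_i\ast P_i\ast P_i,\qquad j({\elementQmu}_i)\mathbin{\circ_{{\elementQa}_i}}{\elementQmu}_i={\elementQa}_i\ast Q_i\ast Q_i.\]
Now apply~(\ref{n5.3}) to $P_i$ and $Q_i$: this gives $P_i\ast Q_i\le(P_i\ast P_i)\vee(Q_i\ast Q_i)$, and multiplying on the left by ${\elementQa}_i$ (which is join-preserving) gives ${\elementQlambda}_i\mathbin{\circ_{{\elementQa}_i}}{\elementQmu}_i\le\bigl({\elementQlambda}_i\mathbin{\circ_{{\elementQa}_i}}j({\elementQlambda}_i)\bigr)\vee\bigl(j({\elementQmu}_i)\mathbin{\circ_{{\elementQa}_i}}{\elementQmu}_i\bigr)$ for each $i$. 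Joining over $i$ and using the hypothesis on the system, I obtain ${\elementQa}\le A\vee B$, where $A:=\tbigvee_i{\elementQlambda}_i\mathbin{\circ_{{\elementQa}_i}}j({\elementQlambda}_i)$ and $B:=\tbigvee_i j({\elementQmu}_i)\mathbin{\circ_{{\elementQa}_i}}{\elementQmu}_i$ both lie in the integral quantale $\hom ({\elementQa},{\elementQa})$, so $A,B\le{\elementQa}=1_{\elementQa}$.

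The remaining — and most delicate — step is to strengthen ${\elementQa}\le A\vee B$ to the two separate inequalities ${\elementQa}\le A$ and ${\elementQa}\le B$; here the extensionality conditions are indispensable, since the single use of~(\ref{n5.3}) above can only produce the join. The idea I would pursue: from the third and fourth conditions of Corollary~\ref{corollary5.6new}, after joining over the free index, one extracts $B\mathbin{\circ_{{\elementQa}}}{\elementQlambda}_i\le{\elementQmu}_i$ and ${\elementQmu}_i\mathbin{\circ_{{\elementQa}}}A\le{\elementQlambda}_i$ for every $i$, and from the first two conditions the companion relations with $A$ and $B$ interchanged; chaining these, using the commutativity of $\mathbin{\circ_{{\elementQa}}}$, and applying~(\ref{n5.3}) a second time to the products ${\elementQa}_i\ast P_i\ast Q_i$ (this time to bound them \emph{from below} in terms of $A$ and $B$), one forces ${A}\mathbin{\circ_{{\elementQa}}}{B}={\elementQa}=1_{\elementQa}$; since ${A}\mathbin{\circ_{{\elementQa}}}{B}\le A$ and $\le B$, this gives $A=B=1_{\elementQa}$, and Corollary~\ref{corollary5.6new} then applies. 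I expect this join-separation to be the genuine obstacle of the proof: it amounts to showing that the presingleton built from $({\elementQlambda}_i)_{i\in I}$ and the one built from $({\elementQmu}_i)_{i\in I}$ coincide, and the bookkeeping with the residuals $P_i,Q_i$ and the four extensionality relations in $D\alg{Q}$ is intricate.

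Equivalently — and perhaps more transparently — one may bypass Corollary~\ref{corollary5.6new}: the Cauchy completion preserves symmetry precisely when every presingleton $({\Qpresheavef},{\elementQa},{\Qpresheaveg})$ of a symmetric $D\alg{Q}$-category $(X,\alpha)$ satisfies ${\Qpresheaveg}=j\circ{\Qpresheavef}$. Given such a presingleton, symmetry of $(X,\alpha)$ makes $(j\circ{\Qpresheaveg},{\elementQa},j\circ{\Qpresheavef})$ a presingleton as well, and one computes that each of the two self-distances in $(\widehat{X},\widehat{\alpha})$ equals $1_{\elementQa}$ while the two mutual distances are $\tbigvee_{x}{\Qpresheavef}(x)\mathbin{\circ_{|{x}|}}j({\Qpresheavef}(x))$ and $\tbigvee_{x}j({\Qpresheaveg}(x))\mathbin{\circ_{|{x}|}}{\Qpresheaveg}(x)$. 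Running the argument above (with~(\ref{n5.3}) in the same role) forces both mutual distances to equal $1_{\elementQa}$, and then the separation of $(\widehat{X},\widehat{\alpha})$ — Lemma~\ref{lem5.1} — identifies the two presingletons, i.e.\ ${\Qpresheaveg}=j\circ{\Qpresheavef}$, as required.
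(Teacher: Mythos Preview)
Your argument has a genuine gap at the ``join-separation'' step. The single application of~(\ref{n5.3}) to $P_i,Q_i$ correctly yields ${\elementQa}\le A\vee B$, but your passage to ${\elementQa}\le A$ and ${\elementQa}\le B$ separately is not established. The sketch you give --- extracting $B\mathbin{\circ_{\elementQa}}{\elementQlambda}_i\le j({\elementQmu}_i)$ and ${\elementQmu}_i\mathbin{\circ_{\elementQa}}A\le j({\elementQlambda}_i)$ from the extensionality relations and ``chaining'' --- does not force $A\mathbin{\circ_{\elementQa}}B={\elementQa}$: if you carry out the compositions, you only recover trivialities such as $B\mathbin{\circ_{\elementQa}}A\le{\elementQa}$ or $B\mathbin{\circ_{\elementQa}}(\tbigvee_i\nu_i)\le B$. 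There is no second application of~(\ref{n5.3}) that bounds anything \emph{from below}, so the mechanism you allude to is absent.

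The paper sidesteps this difficulty by a different (and simpler) route: it invokes Corollary~\ref{corollary5.4} rather than Corollary~\ref{corollary5.6new}, so the extensionality conditions are never used. The key device is a \emph{squaring} trick. Writing $\nu_i:={\elementQlambda}_i\mathbin{\circ_{{\elementQa}_i}}{\elementQmu}_i\in\hom({\elementQa},{\elementQa})$, one first observes
\[
{\elementQa}={\elementQa}\mathbin{\circ_{\elementQa}}{\elementQa}\le\tbigvee_{i_1,i_2}\nu_{i_1}\mathbin{\circ_{\elementQa}}\nu_{i_2},
\]
and \emph{then} applies~(\ref{n5.3}) inside $\hom({\elementQa},{\elementQa})$ --- i.e.\ in the form ${\elementQlambda}\mathbin{\circ_{\elementQa}}{\elementQmu}\le({\elementQlambda}\mathbin{\circ_{\elementQa}}{\elementQlambda})\vee({\elementQmu}\mathbin{\circ_{\elementQa}}{\elementQmu})$ --- to collapse the double join to the diagonal $\tbigvee_i\nu_i\mathbin{\circ_{\elementQa}}\nu_i$. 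Finally, integrality alone (no~(\ref{n5.3}) needed) gives
\[
\nu_i\mathbin{\circ_{\elementQa}}\nu_i={\elementQlambda}_i\mathbin{\circ_{{\elementQa}_i}}({\elementQmu}_i\mathbin{\circ_{\elementQa}}{\elementQlambda}_i)\mathbin{\circ_{{\elementQa}_i}}{\elementQmu}_i\le{\elementQlambda}_i\mathbin{\circ_{{\elementQa}_i}}j({\elementQlambda}_i),
\]
since $({\elementQmu}_i\mathbin{\circ_{\elementQa}}{\elementQlambda}_i)\mathbin{\circ_{{\elementQa}_i}}{\elementQmu}_i={\elementQlambda}_i\ast({\elementQa}\rightarrow{\elementQmu}_i)\ast({\elementQa}_i\rightarrow{\elementQmu}_i)\le{\elementQlambda}_i=j({\elementQlambda}_i)$ in the integral quantale. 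This produces ${\elementQa}\le A$ directly, and symmetrically ${\elementQa}\le B$, with no join to split. The moral: apply~(\ref{n5.3}) \emph{after} squaring in $\hom({\elementQa},{\elementQa})$, not before at the level of the individual ${\elementQa}_i$.
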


\begin{proof} For all ${\elementQlambda}, {\elementQmu} \in \hom ({\elementQa},{\elementQa})$ we first refer to (\ref{eqn1.2}) and  apply (\ref{n5.3}): 
\stepcounter{num}
\begin{align}\label{n5.4}  {\elementQlambda}\mathbin{\circ_{\elementQa}} {\elementQmu}&={\elementQa}\ast({\elementQa} \rightarrow{\elementQlambda})\ast({\elementQa}\rightarrow{\elementQmu})\\
\notag &\le {\elementQa}\ast((({\elementQa}\rightarrow{\elementQlambda})\ast({\elementQa}\rightarrow{\elementQlambda}))\vee(({\elementQa}\rightarrow{\elementQmu})\ast({\elementQa}\rightarrow{\elementQmu})))=({\elementQlambda}\mathbin{\circ_{\elementQa}} {\elementQlambda})\vee(\elementQmu\mathbin{\circ_{\elementQa}} {\elementQmu}).
\end{align}
\\
Now, with regard to Corollary~\ref{corollary5.4}, we consider a family $\{({\elementQlambda}_i, {\elementQmu}_i) \mid  i\in I\}$ of pairs $({\elementQlambda}_i, {\elementQmu}_i)$ of morphisms  such that ${\elementQmu}_i\in\hom ({\elementQa},{\elementQa}_i)$ and ${\elementQlambda}_i\in\hom ({\elementQa}_i,{\elementQa})$ for each $i\in I$ and the property ${\elementQa}\le  
\tbigvee_{i\in I} {\elementQlambda}_i \mathbin{\circ_{{\elementQa}_i}} {\elementQmu}_i
$ holds. An application of (\ref{n5.4}) leads the  following relation:
\stepcounter{num}
\begin{equation}\label{n6.5CC}
{\elementQa}={\elementQa}\mathbin{\circ_{\elementQa}}{\elementQa}\le  \tbigvee_{i_1,i_2\in I} ({\elementQlambda}_{i_1} \mathbin{\circ_{{\elementQa}_{i_1}}} {\elementQmu}_{i_1})\mathbin{\circ_{\elementQa}} ({\elementQlambda}_{i_2}  \mathbin{\circ_{{\elementQa}_{i_2}}} {\elementQmu}_{i_2})\le  \tbigvee_{i\in I} ({\elementQlambda}_{i} \mathbin{\circ_{{\elementQa}_i}} {\elementQmu}_{i})\mathbin{\circ_{\elementQa}} ({\elementQlambda}_{i} \mathbin{\circ_{{\elementQa}_i}} {\elementQmu}_{i}).
\end{equation}
Using the commutativity and integrality of $\alg{Q}$, we obtain $\newmorphism{20}{\elementQa}{\elementQlambda_i}{\elementQa_i}\in \hom (\elementQa,\elementQa_i)$ and observe  
\begin{equation*}({\elementQlambda}_{i} \mathbin{\circ_{{\elementQa}_i}} {\elementQmu}_{i})\mathbin{\circ_{\elementQa}} ({\elementQlambda}_{i} \mathbin{\circ_{{\elementQa}_i}} {\elementQmu}_{i})\le ({\elementQa}_i\rightarrow {\elementQlambda}_i )\ast {\elementQlambda}_i=({\elementQa}_i,{\elementQlambda}_i,{\elementQa})\mathbin{\circ_{{\elementQa}_i}}({\elementQa},{\elementQlambda}_i,{\elementQa}_i).
\end{equation*}
We apply the previous relation to (\ref{n6.5CC}). Hence ${\elementQa}\le \tbigvee_{i\in I} (\newmorphism{20}{\elementQa_i}{\elementQlambda_i}{\elementQa}
)\mathbin{\circ_{{\elementQa}_i}} (\newmorphism{20}{\elementQa}{\elementQlambda_i}{\elementQa_i}
)$ 
 follows.
 Analogously we verify  ${\elementQa}\le \tbigvee_{i\in I}  (\newmorphism{20}{\elementQa_i}{\elementQmu_i}{\elementQa}
)\mathbin{\circ_{{\elementQa}_i}} (\newmorphism{20}{\elementQa}{\elementQmu_i}{\elementQa_i}
)$. Thus, by Corollary~\ref{corollary5.4}, the Cauchy completion w.r.t.\ the involutive quantaloid $(D\alg{Q},j)$ preserves the symmetry axiom.
\end{proof}  


\begin{remarks}\label{rem5.5AA} 
 (1) Since condition (\ref{n5.3}) is trivially satisfied in any linearly ordered quantale, Corollary~\ref{corollary5.7}
applies to all integral and commutative quantales on $[0,1]$ --- i.e.,~to all left-continuous $t$-norms  (cf.\ \cite[Ex.~2.4.5]{Stubbe18}).
 \pagebreak
  \\[2pt]
(2) If a unital and commutative quantale $\alg{Q}$ is divisible --- meaning that for every pair $({\elementQa}, {\elementQb})\in\alg{Q}\times \alg{Q}$ with ${\elementQa}\le{\elementQb}$, there exists ${\elementQc}\in \alg{Q}$ such that ${\elementQa}={\elementQb}\ast{\elementQc}$ --- then $\alg{Q}$ is necessarily integral, and condition (\ref{n5.3}) holds (cf.\ \cite[Prop.~2.6]{ho95}). Therefore, Corollary~\ref{corollary5.7} also applies to all unital, commutative, and divisible quantales (cf.\ \cite[Prop.~5.4]{PuDexue}).
\end{remarks}

Finally, since the involution $j$ of an involutive quantaloid $(\Q,j)$ acts as identity on objects, the symmetrization of a \donotbreakdash{$\Q$}category $(X,\alpha)$ always exists (cf.\ \cite[p.~279]{Stubbe11}) and is given by $(X, \alpha\wedge \alpha^{op}))$, where:
\begin{equation*}(\alpha\wedge \alpha^{op})(x,y)=\alpha(x,y)\wedge j(\alpha(y,x)), \qquad x,y \in X_0.\end{equation*}
In this context we recall the following important fact (cf.\ \cite[Prop.~6.9]{HK11}).
\begin{proposition} \label{prop5.7} If the involutive quantaloid $(\Q,j)$ preserves the symmetry axiom, then the symmetrization of a Cauchy complete \donotbreakdash{$\Q$}category is again Cauchy complete.
\end{proposition}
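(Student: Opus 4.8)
The plan is to fix a Cauchy complete $\Q$-category $(X,\alpha)$ and to verify the defining property of Cauchy completeness for its symmetrization $(X,\beta)$, $\beta=\alpha\wedge\alpha^{op}$ --- namely that every presingleton of $(X,\beta)$ equals $\widetilde x$ for a unique $x\in X_0$. I would first record that $(X,\beta)$ is separated: for $|x|=|y|$, the equalities $\beta(x,x)=\beta(x,y)$, $\beta(y,y)=\beta(y,x)$ are, by Lemma~\ref{lem4.1AA}, equivalent to $1_{|x|}\le\beta(x,y)$ and $1_{|y|}\le\beta(y,x)$; since $j$ is monotone and fixes identities, and $\beta=\alpha\wedge\alpha^{op}$, this forces $1_{|x|}\le\alpha(x,y)\wedge\alpha(y,x)$ (and symmetrically), hence $\alpha(x,x)=\alpha(x,y)$, $\alpha(y,y)=\alpha(y,x)$, so $x=y$ by separatedness of $(X,\alpha)$. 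Separatedness of $(X,\beta)$ will handle the uniqueness clause throughout.

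Now let $\mu=(f,a,g)$ be a presingleton of $(X,\beta)$. Since $(X,\beta)$ is symmetric and, by hypothesis, the Cauchy completion preserves the symmetry axiom, the characterization recalled just before Example~\ref{exam5.2} shows $\mu$ is a \emph{singleton}: $g=j\circ f$. The key device is to build from $\mu$ a presingleton of the \emph{original} category $(X,\alpha)$ by ``$\alpha$-closure'':
\[\overline f(y)=\bigvee_{z\in X_0}f(z)\circ\alpha(z,y),\qquad\overline g(x)=\bigvee_{z\in X_0}\alpha(x,z)\circ g(z).\]
Routine verifications give that $\overline f$ is a covariant and $\overline g$ a contravariant $\Q$-presheaf of type $a$ on $(X,\alpha)$ (axioms \ref{Q1} and \ref{P1} follow from \ref{C2}); that $\overline f\ge f$ and $\overline g\ge g$ pointwise (take the $z=y$ summand and use \ref{C3}); that \ref{P2} holds because $\overline f(y)\circ\overline g(y)\ge f(y)\circ g(y)$ by \ref{C3}; and that \ref{P3} holds because $g(z)\circ f(w)\le\beta(z,w)\le\alpha(z,w)$ (by \ref{P3} for $\mu$) followed by two applications of \ref{C2}. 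Thus $(\overline f,a,\overline g)$ is a presingleton of $(X,\alpha)$, and Cauchy completeness of $(X,\alpha)$ provides a unique $x_0\in X_0$ with $|x_0|=a$, $\overline f=\alpha(x_0,\underline{\phantom x})$ and $\overline g=\alpha(\underline{\phantom x},x_0)$.

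It remains to identify $\mu$ with $\widetilde{x_0}$ \emph{inside} $(X,\beta)$, i.e.\ to prove $f=\beta(x_0,\underline{\phantom x})$; since $g=j\circ f$ and $\beta$ is symmetric, this also yields $g=\beta(\underline{\phantom x},x_0)$, and separatedness of $(X,\beta)$ then gives uniqueness. One inequality is immediate: $f(y)\le\overline f(y)=\alpha(x_0,y)$ and, using $g=j\circ f$ and monotonicity of $j$, $f(y)=j(g(y))\le j(\overline g(y))=j(\alpha(y,x_0))$, whence $f(y)\le\alpha(x_0,y)\wedge j(\alpha(y,x_0))=\beta(x_0,y)$.

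The reverse inequality $\beta(x_0,y)\le f(y)$ is the step I expect to be the main obstacle, because $\beta(x_0,y)$ is a meet of two joins and cannot be compared with $f(y)$ term by term. I would circumvent this by invoking the analogue of~(\ref{n3.0}) for the $\Q$-category $(X,\beta)$, namely $f(y)=\bigwedge_{z\in X_0}\bigl(g(z)\searrow\beta(z,y)\bigr)$, so that it suffices to show $g(z)\circ\beta(x_0,y)\le\beta(z,y)=\alpha(z,y)\wedge j(\alpha(y,z))$ for every $z\in X_0$. For the $\alpha$-component, $g(z)\le\overline g(z)=\alpha(z,x_0)$ and $\beta(x_0,y)\le\alpha(x_0,y)$ give $g(z)\circ\beta(x_0,y)\le\alpha(z,x_0)\circ\alpha(x_0,y)\le\alpha(z,y)$ by \ref{C2}. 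For the $j$-component, $\beta(x_0,y)\le j(\alpha(y,x_0))$ and $g=j\circ f$ give $g(z)\circ\beta(x_0,y)\le g(z)\circ j(\alpha(y,x_0))=j\bigl(\alpha(y,x_0)\circ f(z)\bigr)$, and $\alpha(y,x_0)\circ f(z)=\overline g(y)\circ f(z)=\bigvee_w\alpha(y,w)\circ g(w)\circ f(z)\le\bigvee_w\alpha(y,w)\circ\beta(w,z)\le\alpha(y,z)$ (using \ref{P3} for $\mu$ and \ref{C2}), so applying $j$ yields $g(z)\circ\beta(x_0,y)\le j(\alpha(y,z))$. Combining the components gives $g(z)\circ\beta(x_0,y)\le\beta(z,y)$, hence $\beta(x_0,y)\le g(z)\searrow\beta(z,y)$ for all $z$, and finally $\beta(x_0,y)\le f(y)$. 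This establishes $\mu=\widetilde{x_0}$ and completes the argument.
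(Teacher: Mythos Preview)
The paper does not supply its own proof of this proposition; it merely recalls the result from \cite[Prop.~6.9]{HK11}. So there is nothing to compare against directly.

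Your argument is correct. The strategy --- extend a presingleton $(f,a,g)$ of $(X,\beta)$ to a presingleton $(\overline f,a,\overline g)$ of $(X,\alpha)$ by $\alpha$-closure, invoke Cauchy completeness of $(X,\alpha)$ to obtain $x_0$, then use the singleton identity $g=j\circ f$ (available precisely because the hypothesis says every presingleton of a symmetric category is a singleton) to show $f=\beta(x_0,\underline{\phantom x})$ --- is the natural one and all steps go through. The one place that deserves care is the reverse inequality $\beta(x_0,y)\le f(y)$, and your route via the adjunction formula~(\ref{n3.0}) for $(X,\beta)$ together with the decomposition $\alpha(y,x_0)=\overline g(y)=\bigvee_w\alpha(y,w)\circ g(w)$ handles it cleanly: the $\alpha$-component of $\beta(z,y)$ comes from $g(z)\le\alpha(z,x_0)$ and \ref{C2}, while the $j$-component comes from rewriting $g(z)\circ j(\alpha(y,x_0))=j(\alpha(y,x_0)\circ f(z))$ and bounding the inside using \ref{P3} for $\mu$. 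Nothing is missing.
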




\section{Quantale-valued preorders and quantaloid enriched categories}\label{section7}
Let $\alg{Q}=(\alg{Q},\ast,\mbox{}^{\prime})$ be an involutive quantale, and let $(D\alg{Q},j)$ be the involutive quantaloid associated with $\alg{Q}$ by means of diagonal arrows (cf.\ Remark~\ref{newremark1.3}). Every morphism $\newmorphism{16}{\elementQa}{\lambda}{\elementQb}$ of $D\alg{Q}$ corresponds to a unique element $\lambda \in \alg{Q}$. Hence every  hom-arrow-assignment $\alpha$ of a \donotbreakdash{$D\alg{Q}$}category $(X,\alpha)$ induces a map $\psi\colon X_0\times X_0 \to \alg{Q}$ such that $\alpha(x,y)=\newmorphism{40}{|y|}{\psi(x,y)}{|x|}$ for all $ x,y\in X_0$.
In the following we investigate the properties of $\psi$. Since the composition in $D\alg{Q}$ is given by
\[(\newmorphism{16}{\elementQa}{\lambda}{\elementQb})\mathbin{\circ_{\elementQa}}( \newmorphism{16}{\elementQc}{\mu}{\elementQa})=\newmorphism{80}{\elementQc}{(\lambda\ast(|\elementQa|\searrow \mu))}{\elementQb}=\newmorphism{80}{\elementQc}{((\lambda\swarrow|\elementQa|)\ast \mu)}{\elementQb}  
  \]
  for each morphism $\newmorphism{16}{\elementQa}{\lambda}{\elementQb}$ and $ \newmorphism{16}{\elementQc}{\mu}{\elementQa}$ of $D\alg{Q}$, we conclude from (\ref{n2.1C4}) that $\psi$ satisfies the following property for all $x,y \in X_0$: 
\stepcounter{num}
 \begin{equation}\label{n6.1} \psi(x,x) \ast(|x|\searrow\psi(x,y))=\psi(x,y) =(\psi(x,y)\swarrow |y|) \ast \psi (y,y).
 \end{equation}
 Hence $\psi(x,x)$ is left divisor of $\psi(x,y)$ and $\psi(y,y)$ is right divisor of $\psi(x,y)$, and so the divisibility property
\stepcounter{num}
\begin{equation} \label{n6.2} \psi(x,x)\ast (\psi(x,x) \searrow \psi(x,y))=\psi(x,y)= (\psi(x,y)\swarrow \psi(y,y))\ast \psi(y,y)
\end{equation}
holds for all $x,y\in X_0$.
Combining (\ref{n6.1}) and (\ref{n6.2}), we obtain:
\begin{align*}
\psi(x,{y})\ast (\psi({y},{y})\searrow \psi({y},{z}))&=(\psi({x},{y}) \swarrow |{y}|)\ast \psi({y},{y})\ast (\psi({y},{y})\searrow \psi({y},{z}))\\
&=(\psi({x},{y}) \swarrow |{y}|)\ast \psi({y},{z}),
\end{align*}
and hence, by axiom \ref{C2} the following relation follows: 
\stepcounter{num}
\begin{equation}\label{n6.4}
\psi({x},{y})\ast (\psi({y},{y})\searrow \psi({y},{z}))\le \psi({x},{z}), \qquad {x},{y},{z}\in X_0
  \end{equation}

A pair $(X_0,\psi)$ is called a \emph{\donotbreakdash{$\alg{Q}$}valued preordered set} if $X_0$ is a set and $\psi\colon X_0\times X_0\to \alg{Q}$ is a map satisfying the conditions (\ref{n6.2}) and (\ref{n6.4}), referred to as the \emph{\donotbreakdash{$\alg{Q}$}preorder} of $(X_0,\psi)$.

We have shown that every \donotbreakdash{$D\alg{Q}$}category can be understood as a \donotbreakdash{$\alg{Q}$}valued preordered set. This correspondence is a motivation to interpret  condition (\ref{n6.2}) in terms of preorder semantics.
  For this purpose we assume that every element of the underlying quantale $\alg{Q}$ is \emph{self-divisible}.  Then it is well known that $\alg{Q}$ induces two natural  preorders on its elements:
\begin{enumerate}[label=\textup{--},leftmargin=12pt,topsep=3pt,itemsep=0pt]
\item The left-preorder: For ${\elementQa},{\elementQb}\in \alg{Q}$, we say that ${\elementQb}$ is smaller than ${\elementQa}$ if ${\elementQa}$ is a left divisor of ${\elementQb}$ --- i.e.,~${\elementQb}={\elementQa}\ast ({\elementQa}\searrow {\elementQb})$.
\item The right-preorder: Similarly, ${\elementQb}$ is smaller than ${\elementQa}$ if ${\elementQa}$ is right divisor of ${\elementQb}$ --- i.e.,~${\elementQb}=({\elementQb}\swarrow {\elementQa})\ast {\elementQa}$.
\end{enumerate}
 \pagebreak
  Given a \donotbreakdash{$\alg{Q}$}preorder $\psi$, we can now interpret its values semantically: 
  The value $\psi({x},{x})$ expresses the \emph{extent} to which ${x}$ \emph{exists}, while $\psi({x},{y})$ expresses the \emph{extent} to which ${x}$ is smaller then ${y}$. 
   Under this interpretation,  condition (\ref{n6.2}) express the following principle:
\begin{quote}{\small The extent to which ${x}$ is smaller than ${y}$ is bounded by the extent of existence of ${x}$ (via the left-preorder) and of ${y}$ (via the right-preorder).}
\end{quote}
This is in line with the foundational idea in logic and ontology that properties of elements require first their existence.

Condition (\ref{n6.4}) represents the \emph{transitivity axiom} for \donotbreakdash{$\alg{Q}$}preorders, which differs from traditional approaches in many-valued logics, but is consistent with the \emph{composition law} in the quantaloid $D\alg{Q}$. It ensures that the structure respects the enriched categorical framework while preserving the semantic intuition of orderings.

If $\alg{Q}$ is a commutative, unital, and divisible quantale, then the left- and right-preorder coincides with the order of the underlying lattice of $\alg{Q}$. In this case, condition (\ref{n6.2}) is equivalent to the strictness condition:
\stepcounter{num}
\begin{equation}\label{n6.5}
\psi({x},{y})\le \psi({x},{x})\wedge \psi({y},{y}), \qquad {x},{y}\in X_0.
\end{equation}
This condition reflects the idea of  geometric logic that the extent to which $x$ is related to $y$ is bounded by the extent to which $x$ \emph{and} $y$ exist. It originates from the strictness axioms of \donotbreakdash{$\Omega$}valued sets (cf.\ \cite{Scott1} and \cite{Scott2}).

Let $X_0$ be a set. 
A \donotbreakdash{$\alg{Q}$}preorder $\varepsilon\colon {{X_0\times X_0}\to \alg{Q}}$ is a \emph{\donotbreakdash{$\alg{Q}$}valued equivalence relation} on $X_0$ if $\varepsilon$  satisfies the  additional symmetry axiom\textup:
\stepcounter{num}
\begin{equation}\label{n6.6} \varepsilon({x},{y})=(\varepsilon({y},{x}))^{\prime}, \qquad {x},{y}\in X_0 \tag{Symmetry}
  \end{equation}
If $\varepsilon$ is symmetric, then the pair $(X_0,\varepsilon)$ is called a \emph{\donotbreakdash{$\alg{Q}$}valued set}. 

To identify a \donotbreakdash{$\alg{Q}$}valued set $(X_0,\varepsilon)$ with a symmetric \donotbreakdash{$\alg{Q}$}category, we define a type function on $X_0$ as follows. Since (\ref{n6.2}) and the symmetry axiom imply that $\varepsilon(x,x)$ is self-divisible and  hermitian for each $x\in X_0$, we set: 
\stepcounter{num}\begin{equation}\label{n6.7}
t(x)=\varepsilon(x,x), \qquad x\in X_0.
\end{equation}
This defines the typed set $X=(X_0,t)$, and the \donotbreakdash{$\alg{Q}$}valued set $(X_0,\varepsilon)$ becomes a symmetric \donotbreakdash{$D\alg{Q}$}category $(X,\alpha)$, 
 where $\alpha$ is given by $\alpha(x,y)= \newmorphism{40}{\varepsilon(y,y)}{\varepsilon(x,y)}{\varepsilon(x,x)}$ for all  ${x,y\in X_0}$.
 In this sense we  always identify \donotbreakdash{$\alg{Q}$}valued sets with symmetric \donotbreakdash{$D\alg{Q}$}categories, where the type function is determined by (\ref{n6.7}). 

Finally, note that the value
 $t(x)=\varepsilon(x,x)$ of the type function at $x$, 
interpreted as the \emph{extent of existence} of $x$, provides a nontrivial semantic layer to \donotbreakdash{$\alg{Q}$}valued sets. 
It ensures that the structure not only encodes relationships between elements but also reflects their individual degrees of presence or certainty within the system.


The category of \donotbreakdash{$\alg{Q}$}valued sets, with left adjoint distributors as morphisms, is denoted by $\cat{Set}(\alg{Q})$. With   regard to the results in Section~\ref{section5} and Section~\ref{section6}  the following important fact holds.

\begin{theorem}\label{thm7.2} If the Cauchy completion w.r.t.\ $D\alg{Q}$  preserves the symmetry axiom,  then $\cat{Set}(\alg{Q})$ is an $($epi, extremal mono$)$-category.
\end{theorem}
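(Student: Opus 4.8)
The plan is to realise $\cat{Set}(\alg{Q})$ as a full subcategory of \donotbreakdash{$D\alg{Q}$}\cat{Set} and to show that the \donotbreakdash{(epi, extremal mono)}factorization of Corollary~\ref{cor4.4} never leaves it. By (\ref{n6.7}) and the discussion following it, a \donotbreakdash{$\alg{Q}$}valued set is the same thing as a symmetric \donotbreakdash{$D\alg{Q}$}category, and in both $\cat{Set}(\alg{Q})$ and \donotbreakdash{$D\alg{Q}$}\cat{Set} the morphisms are the left adjoint distributors; hence $\cat{Set}(\alg{Q})$ is precisely the full subcategory of \donotbreakdash{$D\alg{Q}$}\cat{Set} spanned by the symmetric \donotbreakdash{$D\alg{Q}$}categories. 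By Theorems~\ref{them4.2} and \ref{them4.3} a left adjoint distributor $\Phi$ with right adjoint $\Psi$ is an epimorphism of \donotbreakdash{$D\alg{Q}$}\cat{Set} if and only if (\ref{n4.1}) holds (equivalently, $\Phi\otimes\Psi$ is the identity distributor of the codomain), and an extremal monomorphism if and only if (\ref{n4.3}) holds (equivalently, $\Psi\otimes\Phi$ is the identity distributor of the domain). The same proofs apply with $\Phi$ viewed inside $\cat{Set}(\alg{Q})$: the sufficiency directions are immediate, and for the necessity directions the test \donotbreakdash{$D\alg{Q}$}categories occurring in those proofs can, under our hypothesis, be chosen symmetric --- the one used for extremal monomorphisms is already a full sub-\donotbreakdash{$D\alg{Q}$}category of a presingleton space, which is symmetric by Lemma~\ref{lem5.1} together with the assumption, while the one used for epimorphisms must be replaced by the Cauchy completion of the presingleton space involved. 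Thus epimorphisms and extremal monomorphisms of $\cat{Set}(\alg{Q})$ are again characterised by (\ref{n4.1}) and (\ref{n4.3}).

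Now let $\Phi\colon(X,\alpha)\circlearrow(Y,\beta)$ be a morphism of $\cat{Set}(\alg{Q})$, so $(X,\alpha)$ and $(Y,\beta)$ are symmetric. Its \donotbreakdash{(epi, extremal mono)}factorization in \donotbreakdash{$D\alg{Q}$}\cat{Set}, constructed in the proof of Corollary~\ref{cor4.4}\,(a), runs through the \donotbreakdash{$D\alg{Q}$}category $(Z,\widehat{\beta})$ with
\[Z_0=\bigset{\mu\in\widehat{Y}_0\mid\widehat{\beta}(\mu,\mu)=\tbigvee_{x\in X_0}\widehat{\beta}(\mu,\mu_x)\circ\widehat{\beta}(\mu_x,\mu)}\]
and $\widehat{\beta}$ restricted to $Z_0\times Z_0$, and it writes $\Phi=\Theta\otimes\Xi$ with $\Xi\colon(X,\alpha)\circlearrow(Z,\widehat{\beta})$ an epimorphism and $\Theta\colon(Z,\widehat{\beta})\circlearrow(Y,\beta)$ an extremal monomorphism of \donotbreakdash{$D\alg{Q}$}\cat{Set}. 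Here $(Z,\widehat{\beta})$ is a full sub-\donotbreakdash{$D\alg{Q}$}category of the presingleton space $(\widehat{Y},\widehat{\beta})$ of $(Y,\beta)$, i.e.\ of the Cauchy completion of $(Y,\beta)$ (Lemma~\ref{lem5.1}). Since $(Y,\beta)$ is symmetric and, by hypothesis, the Cauchy completion with respect to $D\alg{Q}$ preserves the symmetry axiom, $(\widehat{Y},\widehat{\beta})$ satisfies \ref{C4}; and the symmetry axiom \ref{C4} is plainly inherited by every full sub-\donotbreakdash{$D\alg{Q}$}category. Hence $(Z,\widehat{\beta})$ is symmetric, that is, an object of $\cat{Set}(\alg{Q})$, and therefore $\Xi$ and $\Theta$ are morphisms of $\cat{Set}(\alg{Q})$.

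It remains to see that $\Phi=\Theta\otimes\Xi$ is an \donotbreakdash{(epi, extremal mono)}factorization inside $\cat{Set}(\alg{Q})$. The distributor $\Xi$ satisfies (\ref{n4.1}), hence it is an epimorphism of $\cat{Set}(\alg{Q})$. The distributor $\Theta$ satisfies (\ref{n4.3}), so $\Theta^{*}\otimes\Theta$ is the identity distributor of $(Z,\widehat{\beta})$ and $\Theta$ is in particular a monomorphism of $\cat{Set}(\alg{Q})$; if $\Theta=\Theta'\otimes e$ with $e$ an epimorphism of $\cat{Set}(\alg{Q})$, then from $\id=\Theta^{*}\otimes\Theta=e^{*}\otimes(\Theta'^{*}\otimes\Theta')\otimes e\ge e^{*}\otimes e\ge\id$ one gets $e^{*}\otimes e=\id$, i.e.\ $e$ satisfies (\ref{n4.3}); since $e$ is an epimorphism of $\cat{Set}(\alg{Q})$ it also satisfies (\ref{n4.1}), i.e.\ $e\otimes e^{*}=\id$, whence $e$ is an isomorphism with inverse $e^{*}$. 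Thus $\Theta$ is an extremal monomorphism of $\cat{Set}(\alg{Q})$. Uniqueness of the factorization up to isomorphism follows word for word from the proof of Corollary~\ref{cor4.4}\,(b), which uses only the two characterising conditions and the cancellation properties of $\otimes$. Hence $\cat{Set}(\alg{Q})$ is an \donotbreakdash{(epi, extremal mono)}category. The one delicate point --- and the place where the hypothesis is used a second time --- is the verification that the necessity half of Theorem~\ref{them4.2} still holds in $\cat{Set}(\alg{Q})$, i.e.\ that epimorphisms of $\cat{Set}(\alg{Q})$ are exactly the left adjoint distributors satisfying (\ref{n4.1}); this requires re-running that argument with a symmetric, separated test \donotbreakdash{$D\alg{Q}$}category in place of the covariant presheaf category used there.
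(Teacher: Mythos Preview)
The paper states Theorem~\ref{thm7.2} without proof, treating it as an immediate consequence of Corollary~\ref{cor4.4} together with the results of Section~\ref{section6}; your approach --- pull the factorization of Corollary~\ref{cor4.4} back into the full subcategory $\cat{Set}(\alg{Q})$ by checking that the intermediate object $(Z,\widehat{\beta})$ is symmetric --- is exactly the intended one, and the use of the hypothesis to make $(\widehat{Y},\widehat{\beta})$ (hence $(Z,\widehat{\beta})$) symmetric is correct.

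There is, however, a genuine gap at the point you yourself flag as ``delicate''. You need that an epimorphism of $\cat{Set}(\alg{Q})$ satisfies (\ref{n4.1}); you invoke this explicitly in the step ``since $e$ is an epimorphism of $\cat{Set}(\alg{Q})$ it also satisfies (\ref{n4.1})'', and implicitly in appealing to Corollary~\ref{cor4.4}\,(b) ``word for word'' for uniqueness. But the necessity proof of Theorem~\ref{them4.2} tests against $(\Q(\widehat{Y},\widehat{\beta}),\upsilon)$, which is not symmetric, and your suggested replacement ``the Cauchy completion of the presingleton space involved'' does not make sense here: the presingleton space is already Cauchy complete, and in any case it is not the codomain used in that proof. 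Two concrete fixes are available. First, for the extremal-mono step you do not need (\ref{n4.1}) at all: once you have shown $e^{*}\otimes e=\id$, the morphism $e$ is a split monomorphism, and split mono $+$ epi $=$ iso in any category (from $e\otimes e^{*}\otimes e=e=\id\otimes e$ cancel $e$ on the right). Second, to actually establish the epi characterisation in $\cat{Set}(\alg{Q})$ (which you still need for uniqueness as you have written it), replace the test category $(\Q(\widehat{Y},\widehat{\beta}),\upsilon)$ by its \emph{symmetrization} $(\Q(\widehat{Y},\widehat{\beta}),\upsilon\wedge\upsilon^{op})$: this is symmetric and still separated, and under the hypothesis $(\widehat{Y},\widehat{\beta})$ is symmetric, so the assignments $\mu\mapsto(|\mu|,\widehat{f}_{\mu})$ and $\mu\mapsto(|\mu|,\widehat{g}_{\mu})$ remain functors into it (the extra inequality $\widehat{\beta}(\mu_1,\mu_2)\le j\bigl(\upsilon((|\mu_2|,\widehat{f}_{\mu_2}),(|\mu_1|,\widehat{f}_{\mu_1}))\bigr)$ reduces, via symmetry of $\widehat{\beta}$, to axiom \ref{C2}). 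With either of these in place the remainder of your argument goes through.
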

  \pagebreak
  
\begin{example} \label{example6.3} Let $(D\alg{Q}_0,\omega)$ be defined by 
\[\omega(\elementQa,\elementQb)=\tbigvee\set{ \lambda\in \alg{Q}\mid \newmorphism{16}{\elementQb}{\elementQlambda}{\elementQa} \in \hom (\elementQb,\elementQa)}, \qquad \elementQa,\elementQb \in D\alg{Q}_0.\]
We show that this pair forms a \donotbreakdash{$\alg{Q}$}valued set. First, we notice that $\newmorphism{42}{\elementQb}{\omega(\elementQa,\elementQb)}{\elementQa}$ is the top element of the \donotbreakdash{$\hom$}space $\text{hom}(\elementQb,\elementQa)$ --- i.e.,~$\tau(a,b)=\newmorphism{42}{\elementQb}{\omega(\elementQa,\elementQb)}{\elementQa}$. Hence  $(D\alg{Q}_0,\omega)$ is the \donotbreakdash{$\alg{Q}$}valued preordered  set induced by the \donotbreakdash{$D\alg{Q}$}category $(D\alg{Q},\tau)$ (cf.\ Example~\ref{exam2.1}).
 Moreover $\omega$ is symmetric --- i.e.,~$(D\alg{Q}_0,\omega)$ is a \donotbreakdash{$\alg{Q}$}valued set. Referring to the divisibility condition (\ref{n6.2}) the following special property of $\omega$ follows immediately from its definition:  
\stepcounter{num}
\begin{equation}\label{n6.9} 
\omega(\elementQa,\elementQb)=\omega(\elementQa,\omega(\elementQb,\elementQb))=\omega(\omega(\elementQa,\elementQa),\elementQb)=\omega(\omega(\elementQa,\elementQa),\omega(\elementQb,\elementQb)),\qquad \elementQa,\elementQb\in D\alg{Q}_0.
\end{equation}
Hence the associated hom-arrow-assignment $\xi$ of the symmetric \donotbreakdash{$D\alg{Q}$}category $(D\alg{Q},\xi)$ corresponding to $(D\alg{Q}_0,\omega)$   has the  form:
\[\xi(\elementQa,\elementQb)=\newmorphism{43}{\omega(\elementQb,\elementQb)}{\omega(\elementQa,\elementQb)}{\omega(\elementQa,\elementQa)}=\tau(\omega(\elementQa,\elementQa),\omega(\elementQb,\elementQb)),\qquad \elementQa,\elementQb\in D\alg{Q}_0,\]
where we have used (\ref{n6.9}). Since the relations  (\ref{n6.1}) and (\ref{n6.2}) imply for all $\elementQa,\elementQb,\elementQc\in D\alg{Q}_0$
\begin{align*}\omega(\elementQa,\elementQb)\ast (\omega(\elementQb,\elementQb) \searrow \omega (\elementQb,\elementQc))&=(\omega(\elementQa,\elementQb)\swarrow \elementQb) \ast \omega(\elementQb,\elementQb)\ast (\omega(\elementQb,\elementQb) \searrow \omega (\elementQb,\elementQc))\\
&= \bigl(\omega(\elementQa,\elementQb)\swarrow \elementQb\bigr) \ast \omega(\elementQb,\elementQc),
\end{align*}
the following property holds:
\stepcounter{num}
\begin{equation}\label{n6.9CC} \tau(\elementQa,\elementQb) \circ_{\elementQb} \tau(\elementQb,\elementQc)=\newmorphism{43}{\omega(\elementQb,\elementQb)}{\omega(\elementQa,\elementQb)}{\elementQa}\circ_{\omega(\elementQb,\elementQb)} \newmorphism{43}{\elementQc}{\omega(\elementQb,\elementQc)}{\omega(\elementQb,\elementQb)}, \qquad \elementQa,\elementQb,\elementQc \in D\alg{Q}_0.
\end{equation}
\end{example}


\begin{lemma}\label{lem6.4} The \donotbreakdash{$\alg{Q}$}valued set  $(D\alg{Q}_0,\omega)$ is the terminal object in $\cat{Set}(\alg{Q})$. 
\end{lemma}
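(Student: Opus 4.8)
The plan is to mimic the proof, in Remarks~\ref{remark4.1}(1), that $(\Q,\tau)$ is terminal in $\Q$-$\cat{Set}$. Recall that, under the identification set up in this section, a $\alg{Q}$-valued set is a symmetric $D\alg{Q}$-category whose type function is $x\mapsto\varepsilon(x,x)$ (cf.\ (\ref{n6.7})), so that $\cat{Set}(\alg{Q})$ is (equivalent to) the full subcategory of $D\alg{Q}$-$\cat{Set}$ on such objects, and that, by Example~\ref{example6.3}, the object $(D\alg{Q}_0,\omega)$ is the symmetric $D\alg{Q}$-category $(D\alg{Q},\xi)$ with type $\elementQa\mapsto\omega(\elementQa,\elementQa)$ and $\xi(\elementQa,\elementQb)=\tau(\omega(\elementQa,\elementQa),\omega(\elementQb,\elementQb))$. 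It therefore suffices to show that, for every $\alg{Q}$-valued set $(X,\varepsilon)$ — viewed as a symmetric $D\alg{Q}$-category $(X,\alpha)$ with $|x|=\varepsilon(x,x)$ — there is a unique left adjoint distributor $\Phi\colon(X,\alpha)\circlearrow(D\alg{Q},\xi)$.

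For existence I would let $\Phi$ and its candidate right adjoint $\Psi$ take the largest admissible values:
\[
\Phi(\elementQa,x)=\tau(\omega(\elementQa,\elementQa),|x|)=\tbigvee\hom(|x|,\omega(\elementQa,\elementQa)),\qquad \Psi(x,\elementQa)=\tau(|x|,\omega(\elementQa,\elementQa))=\tbigvee\hom(\omega(\elementQa,\elementQa),|x|),
\]
for $\elementQa\in D\alg{Q}_0$ and $x\in X_0$. Axiom \ref{D1} holds by construction. In each of the inequalities required by \ref{D2} (for $\Phi$ and for $\Psi$) and in the second inequality of \ref{D3}, the left-hand side lies in a hom-space of $D\alg{Q}$ whose top element is precisely the right-hand side, so all of these are automatic. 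The only genuine point is the first inequality of \ref{D3}, namely $1_{|x|}\le\tbigvee_{\elementQa\in D\alg{Q}_0}\Psi(x,\elementQa)\circ\Phi(\elementQa,x)$: here I would bound the join below by the single summand indexed by $\elementQa=\varepsilon(x,x)$, which belongs to $D\alg{Q}_0$ because $\varepsilon(x,x)$ is self-divisible and hermitian (this is where symmetry enters), and reduce the claim to $\varepsilon(x,x)\le\tau(|x|,\omega(|x|,|x|))\mathbin{\circ_{\omega(|x|,|x|)}}\tau(\omega(|x|,|x|),|x|)$. This last inequality unwinds to a short computation with the divisibility identities (\ref{n6.1})–(\ref{n6.2}) and the composition identity (\ref{n6.9CC}) of Example~\ref{example6.3}, using that $\varepsilon(x,x)\le\omega(\varepsilon(x,x),\varepsilon(x,x))$.

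For uniqueness, if $\Phi'\dashv\Psi'$ is another such pair, then \ref{D1} forces $\Phi'(\elementQa,x)\in\hom(|x|,\omega(\elementQa,\elementQa))$ and $\Psi'(x,\elementQa)\in\hom(\omega(\elementQa,\elementQa),|x|)$, whence $\Phi'\le\Phi$ and $\Psi'\le\Psi$ pointwise since $\Phi,\Psi$ are the pointwise suprema of those hom-spaces; Proposition~\ref{prop2.1} then yields $\Phi'=\Phi$. This establishes the terminality of $(D\alg{Q}_0,\omega)$.

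I expect the main obstacle to be precisely the computation verifying the first inequality of \ref{D3} — equivalently, the bookkeeping caused by the fact that the type function of $(D\alg{Q}_0,\omega)$ is $\elementQa\mapsto\omega(\elementQa,\elementQa)$ rather than the identity, so that $(D\alg{Q}_0,\omega)$ is only isomorphic to, not literally equal to, the terminal $D\alg{Q}$-category $(D\alg{Q},\tau)$ of Remarks~\ref{remark4.1}(1). An alternative, more abstract route trades this for the same difficulty: one checks that $(D\alg{Q},\tau)$ is symmetric — since $j$ restricts to a join-preserving bijection $\hom(\elementQa,\elementQb)\to\hom(\elementQb,\elementQa)$, one gets $j(\tau(\elementQb,\elementQa))=\tau(\elementQa,\elementQb)$ — that its normalization is $(D\alg{Q},\xi)$ by Example~\ref{example6.3}, and that the normalization comparison distributor is an isomorphism in $D\alg{Q}$-$\cat{Set}$; a terminal object lying in the full subcategory $\cat{Set}(\alg{Q})$ is then terminal there.
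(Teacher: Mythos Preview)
Your proposal is correct and follows essentially the same route as the paper: you define $\Phi$ and $\Psi$ as the pointwise suprema of the relevant hom-spaces, observe that axioms \ref{D1}, \ref{D2} and the counit half of \ref{D3} hold automatically because each left-hand side lands in a hom-space whose top is the right-hand side, verify the unit inequality of \ref{D3} by picking the summand $\elementQa=\varepsilon(x,x)$ and invoking the composition identity~(\ref{n6.9CC}), and conclude uniqueness from Proposition~\ref{prop2.1}. The paper does exactly this, writing the unit inequality as $\alpha(x,x)\le\tbigvee_{\elementQa}\tau(\varepsilon(x,x),\elementQa)\mathbin{\circ_{\elementQa}}\tau(\elementQa,\varepsilon(x,x))$ and then applying~(\ref{n6.9CC}) termwise to rewrite this join as $\tbigvee_{\elementQa}\Psi(x,\elementQa)\mathbin{\circ_{\omega(\elementQa,\elementQa)}}\Phi(\elementQa,x)$; your single-summand formulation is the same step made explicit.
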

\begin{proof} Let $(X_0,\varepsilon)$ be a \donotbreakdash{$\alg{Q}$}valued set, which we identify with the symmetric \donotbreakdash{$D\alg{Q}$}category $(X,\alpha)$. Referring to (\ref{n6.9}) we define a map $\Phi\colon D\alg{Q}_0\times X_0 \to \morph(D\alg{Q})$ by
\[ \Phi(\elementQa,x)= \newmorphism{75}{\varepsilon(x,x)}{\omega(\elementQa, \varepsilon(x,x))}{\omega(a,a)}=\tau(\omega(a,a),\varepsilon(x,x)), \qquad \elementQa \in D\alg{Q}_0,\, x\in X_0.
\]
The relation $\tau(\omega(\elementQb,\elementQb),\omega(\elementQa,\elementQa))\mathbin{\circ_{\omega(\elementQa,\elementQa)}}\Phi(\elementQa,x)\le  \Phi(\elementQb,x)$ is obvious. Further we observe for all $\elementQa,\elementQb \in D\alg{Q}_0$ and all $x,y\in X_0$:
\begin{align*}
\Phi(\elementQa,x)\mathbin{\circ_{\varepsilon(x,x)}}\alpha(x,y) \le  \Phi(\elementQa,x) \mathbin{\circ_{\varepsilon(x,x)}}\tau(\varepsilon(x,x),\varepsilon(y,y)) \le \Phi(\elementQa,y),
\end{align*}
i.e., $\Phi:(X,\alpha) \circlearrow (D\alg{Q},\xi)$ is a distributor. 
\\
To show that $\Phi$ is left adjoint, we define a distributor $\Psi\colon(D\alg{Q},\xi) \circlearrow (X,\alpha)$ by 
\[\Psi(x,\elementQa)= \newmorphism{70}{\omega(a,a)}{\omega(\varepsilon(x,x),\elementQa)}{\varepsilon(x,x)}=\tau(\varepsilon(x,x),\omega(\elementQa,\elementQa)), \qquad x\in X_0,\,\elementQa \in D\alg{Q}_0.
\]
Clearly, $\Phi(\elementQa,x)\mathbin{\circ_{\varepsilon(x,x)}} \Psi(x,\elementQb)\le \tau(\omega(\elementQa,\elementQa),\omega(\elementQb,\elementQb))$ holds. Further, we apply  (\ref{n6.9CC}) and obtain: 
\begin{align*}
\alpha(x,x)\le\tbigvee_{\elementQa \in D\alg{Q}_0} \tau(\varepsilon(x,x),\elementQa)\mathbin{\circ_{\elementQa}} \tau(\elementQa,\varepsilon(x,x)) =\tbigvee_{\elementQa \in D\alg{Q}_0} \bigl(\Psi(x,\elementQa)\mathbin{\circ_{\omega(\elementQa,\elementQa)}} \Phi(\elementQa,x)\bigr).
\end{align*}
Hence  $\Psi$ is right adjoint to $\Phi$.\\
Let $\Phi^{\prime}\colon (X,\alpha) \circlearrow (D\alg{Q}_0,\xi)$ be an arbitrary left adjoint distributor with the corresponding right adjoint distributor $\Psi^{\prime}$. Since $\Phi^{\prime}(a,x)\in \text{hom}(\varepsilon(x,x),\omega(a,a))$, the relation $\Phi^{\prime}(\elementQa,x)\le\Phi(\elementQa,x)$ follows. Analogously, we verify $\Psi^{\prime}(x,\elementQa)\le\Psi(x,\elementQa)$. By Proposition~\ref{prop2.1}, this implies that $\Phi$ and $\Phi^{\prime}$ coincide.
\end{proof}

Given that the type function of \donotbreakdash{$\alg{Q}$}valued sets is defined by equation (\ref{n6.7}), a \donotbreakdash{$\alg{Q}$}valued set $(X_0,\alpha)$ is \emph{separated} if and only if the following implication holds for all $\elementQx,\elementQy\in X_0$:
\[\varepsilon(\elementQx,\elementQx)=\varepsilon(\elementQx,\elementQy)=\varepsilon(\elementQy,\elementQx)=\varepsilon(\elementQy,\elementQy) \implies x=y.
\]
This condition ensures that distinct elements cannot be indistinguishable in terms of their mutual and self-relatedness, preserving the identity of elements in the enriched structure.

\begin{proposition}\label{proposition6.5} Let $\alg{Q}=(\alg{Q},\ast,e,\mbox{}^{\prime})$ be a unital and involutive quantale, and let $(D\alg{Q}_0,\omega)$ be the terminal object in \cat{Set}$(\alg{Q})$. Then the following properties are equivalent\textup:
\begin{enumerate}[label=\textup{(\arabic*)},topsep=3pt,itemsep=0pt
,leftmargin=20pt,labelwidth=10pt,itemindent=0pt,labelsep=5pt,topsep=5pt,itemsep=3pt
]
\item $\alg{Q}$ is integral \textup(i.e.,~$e=\top$\textup).
\item $(D\alg{Q}_0,\omega)$ is Cauchy complete.
\item $(D\alg{Q}_0,\omega)$ is separated.
\end{enumerate}
\end{proposition}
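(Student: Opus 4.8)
The plan is to establish the cyclic chain of implications $(1)\Rightarrow(2)\Rightarrow(3)\Rightarrow(1)$. The step $(2)\Rightarrow(3)$ is immediate: every Cauchy complete $\Q$-category is separated (as recalled at the beginning of Section~\ref{section6}), and the notion of separatedness of a $\alg{Q}$-valued set spelled out just before the Proposition is exactly separatedness of the associated symmetric $D\alg{Q}$-category. So the substantive content is $(1)\Rightarrow(2)$ and $(3)\Rightarrow(1)$.

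For $(1)\Rightarrow(2)$ I would first prove the auxiliary fact that, when $\alg{Q}$ is integral (so $e=\top$), one has $\omega(\elementQa,\elementQa)=\elementQa$ for every $\elementQa\in D\alg{Q}_0$. On the one hand $\elementQa$ is the unit of the quantale $\hom(\elementQa,\elementQa)$, so $\elementQa\in\hom(\elementQa,\elementQa)$; on the other hand, if $\elementQlambda\in\hom(\elementQa,\elementQa)$ then $\elementQlambda=(\elementQlambda\swarrow\elementQa)\ast\elementQa\le\top\ast\elementQa=e\ast\elementQa=\elementQa$, where integrality was used to write $\top=e$. Hence $\omega(\elementQa,\elementQa)=\tbigvee\hom(\elementQa,\elementQa)=\elementQa$. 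Consequently the type function $\elementQa\mapsto\omega(\elementQa,\elementQa)$ of the $D\alg{Q}$-category attached to $(D\alg{Q}_0,\omega)$ is the identity, and by Example~\ref{example6.3} its hom-arrow-assignment is $\xi(\elementQa,\elementQb)=\tau(\omega(\elementQa,\elementQa),\omega(\elementQb,\elementQb))=\tau(\elementQa,\elementQb)$; that is, in the integral case $(D\alg{Q}_0,\omega)$ coincides with the $D\alg{Q}$-category $(D\alg{Q},\tau)$ of Example~\ref{exam2.1}. Since $D\alg{Q}$ is a small quantaloid, $(D\alg{Q},\tau)$ is the terminal object of $\cat{Cat}_{sc}(D\alg{Q})$, hence separated and cocomplete, and therefore Cauchy complete (cf.\ Section~\ref{section6}). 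This yields $(2)$.

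For $(3)\Rightarrow(1)$ I would argue by contraposition. Assume $\alg{Q}$ is not integral, i.e.\ $e<\top$. Both $e$ and $\top$ are hermitian (the unit of an involutive quantale satisfies $e'=e$, being forced to be a two-sided unit, and the involution, being join-preserving, fixes $\top$), so $e,\top\in D\alg{Q}_0$. Using $\hom(e,e)=\alg{Q}$, the fact that $\top$ is the unit of $\hom(\top,\top)$, the identity $\top\ast\top=\top$ valid in any unital quantale (since $\top\ge\top\ast e=\top$), and the elementary residual evaluations $e\searrow\elementQlambda=\elementQlambda=\elementQlambda\swarrow e$ and $\top\swarrow\top=\top\searrow\top=\top$, one checks that $\top$ lies in each of $\hom(e,e)$, $\hom(\top,\top)$, $\hom(\top,e)$ and $\hom(e,\top)$. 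As $\top$ is the largest element of $\alg{Q}$, it follows that $\omega(e,e)=\omega(\top,\top)=\omega(e,\top)=\omega(\top,e)=\top$, so the pair $(e,\top)$ violates the separation condition of $(D\alg{Q}_0,\omega)$ although $e\neq\top$; hence $(D\alg{Q}_0,\omega)$ is not separated.

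The main obstacle is the bookkeeping in $(1)\Rightarrow(2)$: one must carefully keep apart the role of $\elementQa$ as the \emph{unit} of $\hom(\elementQa,\elementQa)$ and its role as the \emph{top} element of that hom-space, the coincidence of which is precisely the lemma $\omega(\elementQa,\elementQa)=\elementQa$ and is exactly what fails for $\elementQa=e$ in the non-integral case. Once this identification of $(D\alg{Q}_0,\omega)$ with $(D\alg{Q},\tau)$ is in place, $(1)\Rightarrow(2)$ reduces to invoking the already-established structure of the terminal object of $\cat{Cat}_{sc}(D\alg{Q})$, and $(3)\Rightarrow(1)$ to the short explicit computation of $\omega$ on the subset $\{e,\top\}$.
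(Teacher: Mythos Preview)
Your proof is correct and follows the same overall cycle $(1)\Rightarrow(2)\Rightarrow(3)\Rightarrow(1)$ as the paper. The steps $(2)\Rightarrow(3)$ and $(3)\Rightarrow(1)$ are essentially identical to the paper's: the paper likewise observes that $e,\top\in D\alg{Q}_0$ and that $\omega(e,e)=\omega(e,\top)=\omega(\top,e)=\omega(\top,\top)=\top$, forcing $e=\top$ under separatedness.

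For $(1)\Rightarrow(2)$ you take a genuinely different route. Both you and the paper start from the key lemma $\omega(\elementQa,\elementQa)=\elementQa$ (your justification via $\elementQlambda=(\elementQlambda\swarrow\elementQa)\ast\elementQa\le e\ast\elementQa=\elementQa$ is exactly what is needed), so that the associated $D\alg{Q}$-category is $(D\alg{Q},\tau)$. You then invoke the structural fact from Section~\ref{section4} that $(\Q,\tau)$ is separated and cocomplete for any quantaloid $\Q$, and hence Cauchy complete by the opening paragraph of Section~\ref{section6}. The paper instead argues elementarily: given a presingleton $(\Qpresheavef,\elementQa,\Qpresheaveg)$, the bounds $\Qpresheavef(\elementQb)\le\tau(\elementQa,\elementQb)$ and $\Qpresheaveg(\elementQb)\le\tau(\elementQb,\elementQa)$ together with Proposition~\ref{prop2.1} force $\Qpresheavef=\tau(\elementQa,\underline{\phantom{x}})$ and $\Qpresheaveg=\tau(\underline{\phantom{x}},\elementQa)$, i.e.\ the presingleton is $\widetilde{\elementQa}$. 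Your argument is more conceptual but leans on the cocompleteness machinery of Section~\ref{section4}; the paper's is self-contained and uses only Proposition~\ref{prop2.1}, which also makes transparent the observation in the subsequent Comments that every presingleton of the terminal object is in fact a singleton.
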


\begin{proof} We begin by noting that both $\top$ and $e$ 
are hermitian elements of $\alg{Q}$, that is, ${\top, e \in D\alg{Q}_0}$.
 Furthermore, $\hom (e,e)$ is the quantale $\alg{Q}$, $\hom (\top,\top)$ is the \donotbreakdash{$\text{hom}$}space of two-sided elements,
$\hom (\top,e)$ is the \donotbreakdash{$\text{hom}$}space of right-sided elements, and $\hom (e,\top)$ is the \donotbreakdash{$\text{hom}$}space of left-sided elements of $\alg{Q}$. From this, we conclude:
\[\omega(\top,\top)=\omega(\top,e)=\omega(e,\top)=\omega(e,e)=\top.\] 
Consequently, if $(D\alg{Q}_0,\omega)$ is separated, then we obtain $\top=e$ --- i.e.,~$\alg{Q}$ is integral. Therefore, we have shown that
(3)$\Longrightarrow$(1). Since Cauchy completeness implies separation, it remains to prove (1)$\Longrightarrow$(2).
\\
 Let $\mu=(\Qpresheavef,\elementQa,\Qpresheaveg)$ be a  presingleton of the symmetric  \donotbreakdash{$D\alg{Q}$}category $(D\alg{Q},\xi)$ corresponding to $(D\alg{Q}_0,\omega)$. 
Since $\alg{Q}$ is integral, $\omega(\elementQc,\elementQc)=\elementQc$ follows for all $\elementQc\in D\alg{Q}_0$ --- i.e.\ the type function coincides with the identity of $D\alg{Q}_0$, and 
consequently $\newmorphism{45}{b}{\omega(\elementQa,\elementQb)}{\elementQa}=\tau(\elementQa,\elementQb)$ holds for all $\elementQa,\elementQb\in D\alg{Q}_0$. Hence, we have $\Qpresheavef(\elementQb) \le \tau(\elementQa, \elementQb)$ and $\Qpresheaveg(\elementQb)\le\tau(\elementQb,\elementQa)$. By Proposition~\ref{prop2.1}, this implies  $\mu=\widetilde{\elementQa}$ --- i.e.\ $\mu$ is represented by the element $\elementQa$ of $D\alg{Q}_0$. Since the type function coincides with the identity, the uniqueness of $\elementQa$ is evident.
\end{proof}

\begin{comments} (1) Although the Cauchy completion w.r.t.\ $D\alg{Q}$ does not necessarily preserves the symmetry axiom, the proof of the implication (1)$\Longrightarrow$(2) in 
Proposition~\ref{proposition6.5} shows that in the case of integral and involutive 
quantales $\alg{Q}$, every presingleton of the terminal object in \cat{Set}$(\alg{Q})$  is in fact a 
singleton. This reflects a strong form of representability in the case of quantale-valued sets.
\\[2pt]
(2) If  $\alg{Q}$ is non-integral and the Cauchy completion w.r.t.\ $D\alg{Q}$ preserves the symmetry axiom, then the Cauchy completion of  $(D\alg{Q}_0,\omega)$ is also the terminal object of $\cat{Set}(\alg{Q})$ (cf.\ Section~\ref{section6}). Hence in this setting the category $\cat{Set}(\alg{Q})$ has always  a Cauchy complete terminal object.
\end{comments}

If $\alg{Q}$ is a frame $\Omega$ with the identity map as involution, then $\cat{Set}(\Omega)$ coincides with the category of \donotbreakdash{$\Omega$}valued sets  (cf.\ \cite[Sect.~2.8 and 2.9]{Borceux3}).  Recently, in the special case of commutative, unital, and divisible quantales $(\alg{Q},\ast,e)$,  X.~Hu and L.~Shen showed that {$\cat{Set}(\alg{Q})$} is a topos if and only if $\alg{Q}$ is a frame --- i.e.,~$\ast=\wedge$ (cf.\ \cite{HuShen}). This result can be seen as a bridge between a special class of quatale-valued sets and topos theory, highlighting the foundational role of frames in categorical semantics.


\end{document}